\begin{document}

\newtheorem{theorem}{Theorem}[section]
\newtheorem{result}[theorem]{Result}
\newtheorem{fact}[theorem]{Fact}
\newtheorem{conjecture}[theorem]{Conjecture}
\newtheorem{lemma}[theorem]{Lemma}
\newtheorem{proposition}[theorem]{Proposition}
\newtheorem{remark}[theorem]{Remark}
\newtheorem{corollary}[theorem]{Corollary}
\newtheorem{facts}[theorem]{Facts}
\newtheorem{props}[theorem]{Properties}
\newtheorem*{thm3}{Theorem \ref{s2t3}}
\newtheorem*{thm4}{Theorem \ref{s2t4}}
\newtheorem*{propB}{Proposition \ref{Blaschke}}
\newtheorem*{lM1}{Lemma \ref{sMl1}}
\newtheorem*{lM2}{Lemma \ref{sMl2}}
\newtheorem*{tM3}{Theorem \ref{sMl3}}
\newtheorem{ex}[theorem]{Example}
\theoremstyle{definition}
\newtheorem{definition}[theorem]{Definition}

\newcommand{\notes} {\noindent \textbf{Notes.  }}
\newcommand{\note} {\noindent \textbf{Note.  }}
\newcommand{\defn} {\noindent \textbf{Definition.  }}
\newcommand{\defns} {\noindent \textbf{Definitions.  }}
\newcommand{\x}{{\bf x}}
\newcommand{\z}{{\bf z}}
\newcommand{\B}{{\bf b}}
\newcommand{\V}{{\bf v}}
\newcommand{\T}{\mathbb{T}}
\newcommand{\Z}{\mathbb{Z}}
\newcommand{\Hp}{\mathbb{H}}
\newcommand{\D}{\mathbb{D}}
\newcommand{\R}{\mathbb{R}}
\newcommand{\N}{\mathbb{N}}
\renewcommand{\B}{\mathbb{B}}
\newcommand{\C}{\mathbb{C}}
\newcommand{\ft}{\widetilde{f}}
\newcommand{\dt}{{\mathrm{det }\;}}
 \newcommand{\adj}{{\mathrm{adj}\;}}
 \newcommand{\0}{{\bf O}}
 \newcommand{\av}{\arrowvert}
 \newcommand{\zbar}{\overline{z}}
 \newcommand{\xbar}{\overline{X}}
 \newcommand{\htt}{\widetilde{h}}
\newcommand{\ty}{\mathcal{T}}
\renewcommand\Re{\operatorname{Re}}
\renewcommand\Im{\operatorname{Im}}
\newcommand{\tr}{\operatorname{Tr}}

\newcommand{\ds}{\displaystyle}
\numberwithin{equation}{section}
\numberwithin{figure}{section}

\renewcommand{\theenumi}{(\roman{enumi})}
\renewcommand{\labelenumi}{\theenumi}

\title{Dynamics of mappings with constant dilatation.}

\author{Alastair Fletcher \& Robert Fryer}

\begin{abstract}
Let $h:\C \to \C$ be an $\R$-linear map. In this article, we explore the dynamics of the quasiregular mapping $H(z)=h(z)^2$. Via the B\"{o}ttcher type coordinate constructed in \cite{FF}, we are able to obtain results for any degree two mapping of the plane with constant complex dilatation. We show that any such mapping has either one, two or three fixed external rays, that all cases can occur, and exhibit how the dynamics changes in each case.
We use results from complex dynamics to prove that these mappings are nowhere uniformly quasiregular in a neighbourhood of infinity.
We also show that in most cases, two such mappings are not quasiconformally conjugate on a neighbourhood of infinity.

MSC 2010: 30C65 (Primary), 30D05, 37F10, 37F15, 37F45 (Secondary).
\end{abstract}

\maketitle

\section{Introduction}
The recent interest in complex dynamics was initiated by Douady and Hubbard in the 1980s, following on from the work of Fatou, Julia, Montel and others towards the beginning of the twentieth century. Douady and Hubbard initially focussed on the iteration of quadratic polynomials and, with the advent of computer generated images, showed how the iteration of a simply stated function could have very complicated behaviour. For an overview of this theory, see for example \cite{CG,Milnor}.

A new avenue of research into the iteration of quasiregular mappings has recently opened up. Informally, quasiregular mappings send infinitesimal circles to infinitesimal ellipses and the greater the eccentricity of the ellipses, the greater the distortion of the mapping. Recall that a holomorphic function sends infinitesimal circles to infinitesimal circles. Inspired by the value distribution properties that quasiregular mappings have in analogy with holomorphic functions, there has been investigation into the iteration of quasiregular mappings in $\R^n$.

The first quasiregular mappings to be iterated were uniformly quasiregular mappings \cite{IM}, with a uniform bound on the distortion of the iterates. These are special mappings, and in particular, every uniformly quasiregular mapping of the plane is a quasiconformal conjugate of a holomorphic function \cite{Hinkkanen}.

In this article, we will study the iteration of quasiregular mappings in the plane with degree $2$ and constant complex dilatation. The iteration of such mappings was studied in \cite{FG}, where it was shown that every such mapping is linearly conjugate to a mapping of the form $h(z)^2+c$, where $h$ is an affine stretch and $c \in \C$. In our previous work \cite{FF}, we constructed a B\"{o}ttcher coordinate for these mappings in a neighbourhood of infinity. This coordinate is a quasiconformal map which conjugates $h(z)^2+c$ to $h(z)^2$ near infinity.

Therefore we will study in detail the dynamics of maps of the form $h(z)^2$. It was already shown in \cite{FF} that such mappings are not uniformly quasiregular, and hence are not quasiconformal conjugates of holomorphic functions, by looking at the behaviour on a fixed ray. We will further explore the properties of the fixed rays of these mappings, and what they tell us about the dynamical properties of these mappings. Along the way, we will need to make use of results from the theory of the iteration of M\"{o}bius maps and Blaschke products in a pleasing example of using complex dynamics to prove results in quasiregular dynamics.

\section{Statement of results}

Let $K > 1$ and $\theta \in (-\pi/2, \pi/2]$. Write $h_{K,\theta}$ for the affine stretch by factor $K$ in direction $e^{i\theta}$, that is,
\[ h_{K,\theta} := \left ( \frac{K+1}{2} \right ) z + e^{2i\theta} \left ( \frac{K-1}{2} \right ) \overline{z}.\]
If $K=1$, then this mapping is the identity and does not depend on $\theta$.
The purpose of this article is to continue the study of the dynamics of the quasiregular mappings $h(z)^2+c$ initiated in \cite{FG},
where $h=h_{K,\theta}$ for $K > 1$ and $\theta \in (\pi/2, \pi/2]$, and $c \in \C$.
If the mapping $h$ is fixed, we will write $H(z)=h(z)^2$.

The justification for studying these mappings in the class of degree two quasiregular mappings of the plane with constant complex dilatation is given by the following proposition.

\begin{proposition}
\label{canonicalform}
Let $f:\C \to \C$ be quasiregular of degree two and let $f$ have constant complex dilatation that is not identically $0$. Then $f$ is linearly conjugate to a unique mapping of the form
$h_{K,\theta}(z)^2+c$ for some $K> 1$, $\theta \in (-\pi/2, \pi/2]$ and $c \in \C$.
\end{proposition}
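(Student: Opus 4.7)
The plan is to combine Stoilow factorization with an explicit sequence of $\C$-affine coordinate changes to put $f$ into the desired normal form, and then to establish uniqueness by expanding the conjugation equation between two normal forms as a polynomial identity in $z$ and $\bar z$ and matching coefficients.

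For existence, start with Stoilow factorization: write $f=g\circ\psi$ where $\psi:\C\to\C$ is quasiconformal with $\mu_\psi=\mu_f$ and $g$ is holomorphic. Since $f$ has degree two, $g$ is a quadratic polynomial, and constancy of $\mu_f$ lets us take $\psi$ to be $\R$-affine, say $\psi(z)=Az+B\bar z+C$ with $0<|B|<|A|$. Completing the square gives $g(w)=\tilde a(w-w_0)^2+d$, so $f(z)=\tilde a(\psi(z)-w_0)^2+d$. Since $|A|\neq|B|$, there is a unique $z_0\in\C$ with $\psi(z_0)=w_0$; conjugating by the translation $L_1(z)=z+z_0$ produces $\tilde a(Az+B\bar z)^2+c'$ for an explicit constant $c'$. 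Now conjugate by the scaling $L_2(z)=\lambda z$; direct computation gives
\[ L_2^{-1}\circ\bigl[\tilde a(Az+B\bar z)^2+c'\bigr]\circ L_2(z)=\tilde a\lambda\bigl(Az+B\tfrac{\bar\lambda}{\lambda}\bar z\bigr)^2+c'/\lambda. \]
Define $K=(1+|B/A|)/(1-|B/A|)>1$, so $(K-1)/(K+1)=|\mu_f|$, and let $\lambda$ be the unique complex number with $\tilde a\lambda A^2=\bigl((K+1)/2\bigr)^2$. Then the leading coefficient of the inner $\R$-linear map is $(K+1)/2$ and its $\bar z$-coefficient has modulus $(K-1)/2$, so it equals $h_{K,\theta}(z)$ for a unique $\theta\in(-\pi/2,\pi/2]$; setting $c=c'/\lambda$ finishes existence.

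For uniqueness, suppose $L(z)=\lambda z+\tau$ satisfies $L^{-1}H_1L=H_2$ where $H_j(z)=h_{K_j,\theta_j}(z)^2+c_j$; write $h_{K_j,\theta_j}(z)=A_jz+B_j\bar z$ with $A_j>0$. Expanding $H_1\circ L=L\circ H_2$ in the monomials $z^i\bar z^j$, the $z$- and $\bar z$-coefficients on the left arise only from the cross term $2D(A_1\lambda z+B_1\bar\lambda\bar z)$ with $D=A_1\tau+B_1\bar\tau$; since the right side has no degree-one terms, $D=0$, and $|A_1|>|B_1|$ then forces $\tau=0$. Matching $z^2$-coefficients gives $\lambda A_1^2=A_2^2$, so $\lambda>0$; matching $z\bar z$-coefficients then yields $B_1/A_1=B_2/A_2$, i.e.\ $\mu_{H_1}=\mu_{H_2}$, hence $K_1=K_2$, $\lambda=1$, $\theta_1=\theta_2$ (using that $(-\pi/2,\pi/2]$ is a fundamental domain for $\theta$ modulo $\pi$, since $h_{K,\theta+\pi}=h_{K,\theta}$), and finally $c_1=c_2$.

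The main obstacle is organizational rather than conceptual: one must carry $\tilde a$, $A$, $B$ and the additive constants cleanly through each conjugation step, and verify that the single complex scaling $\lambda$ both normalizes the leading coefficient to a positive real and places the dilatation in the correct argument range. The sign and argument analysis when choosing $\lambda$ is the most delicate piece.
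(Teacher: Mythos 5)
Your argument is correct. It follows the same skeleton as the paper's proof --- Stoilow factorization to write $f$ as a quadratic polynomial post-composed with an $\R$-affine map of the prescribed constant dilatation, followed by affine conjugation --- but where the paper immediately hands the remaining work to \cite[Proposition 3.1]{FG}, you carry out that normalization and the uniqueness explicitly. Concretely: the paper's proof stops at $f = g\circ h_{K,\theta}$ with $g$ quadratic and cites [FG] to absorb $g$'s lower-order terms into a translation and scaling; your translation $L_1$ (killing the critical value shift, which exists and is unique because $|A|\neq|B|$ makes the linear part of $\psi$ invertible) and scaling $L_2$ do exactly this by hand, and your coefficient-matching argument in $z^i\bar z^j$ supplies the uniqueness statement, which the paper's displayed proof never addresses except by citation. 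The one point worth tightening is the normalization of the leading coefficient: after writing $\tilde a\lambda(Az+B\tfrac{\bar\lambda}{\lambda}\bar z)^2=\bigl(sAz+sB\tfrac{\bar\lambda}{\lambda}\bar z\bigr)^2$ with $s^2=\tilde a\lambda$, the condition $\tilde a\lambda A^2=((K+1)/2)^2$ only gives $sA=\pm(K+1)/2$; you should note that the two square roots $\pm s$ yield the same squared map, so one may choose the root making $sA$ positive. This is cosmetic, not a gap. Everything else --- the forcing of $\tau=0$ from the absence of linear terms together with $|A_1|>|B_1|$, the deduction $\lambda>0$ from the $z^2$ term, and the recovery of $K$ and then $\theta$ from $B_1/A_1=B_2/A_2$ using that $(-\pi/2,\pi/2]$ is a fundamental domain for $\theta\bmod\pi$ --- is sound.
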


We prove this proposition in the next section. A \emph{ray} is a semi-infinite line $R_{\phi} = \{ te^{i\phi} : t \geq 0\}$. It is not hard to see that $h_{K,\theta}$ maps rays to rays, and so $H$ also maps rays to rays. This means that $H$ induces an increasing mapping $\widetilde{H}:\R \to \R$ that is $2\pi$-periodic, and is given by $\widetilde{H}(\varphi) = \arg[H(re^{i\varphi})]$, for any $r>0$. We say that a ray $R_{\phi}$ which is fixed by $H$ is \emph{locally repelling}, \emph{locally expanding} or \emph{neutral} if the induced mapping satisfies $\widetilde{H}'(\phi) <1$, $\widetilde{H}'(\phi) >1$ or $\widetilde{H}'(\phi)=1$ respectively.

\begin{theorem}\label{s2t1}
Let $\theta \in (-\pi/2, \pi/2]$, $K>1$ and let $H(z)=h_{K,\theta}(z)^2$. Then there exists $K_\theta>1$ such that:
\begin{itemize}
  \item for $K<K_{\theta}$, there is one fixed ray that is locally repelling;
  \item for $K=K_{\theta}$, there are two fixed rays, one of which is locally repelling and one that is neutral. Further, the neutral fixed ray is repelling on one side and attracting on the other;
  \item for $K>K_{\theta}$, there are three fixed rays, one of which is locally attracting and two that are locally repelling.
\end{itemize}
\end{theorem}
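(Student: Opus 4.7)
I would first reduce the fixed-ray problem to a one-dimensional circle equation. A direct calculation gives
\[
h_{K,\theta}(re^{i\varphi}) = r\,e^{i\theta}\bigl(K\cos(\varphi-\theta) + i\sin(\varphi-\theta)\bigr),
\]
so $\widetilde{H}(\varphi) = 2\theta + 2T_K(\varphi-\theta)$, where $T_K(\psi) := \arg(K\cos\psi + i\sin\psi)$. Setting $\psi = \varphi - \theta$ and $F_K(\psi) := 2T_K(\psi) - \psi$, fixed rays of $H$ correspond exactly to solutions of $F_K(\psi) \equiv -\theta \pmod{2\pi}$, and since $\widetilde{H}'(\varphi) = F_K'(\psi) + 1$, the sign of $F_K'$ at a solution determines whether the fixed ray is repelling ($F_K' > 0$), attracting ($F_K' < 0$) or neutral ($F_K' = 0$).

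The next step is to analyze $F_K$ as a function of $K$. From
\[
F_K'(\psi) = \frac{2K}{K^2\cos^2\psi + \sin^2\psi} - 1,
\]
whose minimum $2/K - 1$ is attained at $\psi = 0$, $F_K$ is strictly monotone precisely when $K \le 2$; for such $K$ the equation has a unique solution on an increasing arc, giving one repelling fixed ray. For $K > 2$ the critical points of $F_K$ satisfy $\cos^2\psi = (2K-1)/(K^2-1)$, and the symmetries $F_K(-\psi) = -F_K(\psi)$ and $F_K(\psi+\pi) = F_K(\psi)+\pi$ reduce the analysis to a single local-minimum value $m(K) := F_K(\psi_*)$ at $\psi_* \in (0,\pi/2)$. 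Using $F_K'(\psi_*) = 0$, the envelope identity
\[
\frac{dm}{dK} = 2\,\frac{\partial T_K}{\partial K}(\psi_*) = -\frac{\sin(2\psi_*)}{2K} < 0
\]
combined with $m(2) = 0$, the bound $m(K) = 2T_K(\psi_*) - \psi_* > -\pi/2$ (from positivity of $T_K$ on $(0,\pi/2)$ and $\psi_* < \pi/2$), and the asymptotic $m(K) \to -\pi/2$ as $K \to \infty$, makes $m \colon [2,\infty) \to (-\pi/2, 0]$ a continuous decreasing bijection. I would then define $K_\theta$ as the unique $K \ge 2$ with $m(K) = -|\theta|$, using the symmetry $K_\theta = K_{-\theta}$ coming from $z \mapsto \overline{z}$ to reduce to $\theta \ge 0$ and interpreting $\theta = \pi/2$ as the limiting case $K_\theta = \infty$.

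Finally I would count solutions and identify stabilities. For $K < K_\theta$ the line $y = -\theta \pmod{2\pi}$ lies above $m(K)$, misses the bump, and meets $F_K$ at exactly one point on an increasing arc: one repelling ray. At $K = K_\theta$ the line is tangent at $\psi_*$, so $F_K - (-\theta)$ has a second-order zero there, yielding a neutral ray together with the persistent transverse repelling solution; the identity $\widetilde{H}(\varphi) - \varphi = F_K(\psi) + \theta \ge 0$ with equality only at $\psi_*$ immediately shows the neutral ray is attracting on one side and repelling on the other. For $K > K_\theta$ the bump dips strictly below $-\theta$, producing two new transverse intersections near $\psi_*$---one on the decreasing arc $[0,\psi_*]$ (attracting) and one on the increasing arc $[\psi_*, \pi - \psi_*]$ (repelling)---which together with the persistent far-away repelling solution give one attracting and two repelling fixed rays. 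The main technical point is the uniform bound $m(K) > -\pi/2$: together with the corresponding bounds on the other three critical values of $F_K$ obtained via the symmetries, it ensures that for $\theta \in (-\pi/2,\pi/2]$ the horizontal line $y = -\theta \pmod{2\pi}$ can only become tangent to the graph at the single critical value $m(K)$, ruling out any further saddle-node bifurcation as $K$ grows and locking the count at three fixed rays.
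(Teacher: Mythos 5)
Your plan is correct, and its skeleton matches the paper's: both reduce to the induced circle map $\widetilde{H}(\varphi)=2\theta+2\tan^{-1}(\tan(\varphi-\theta)/K)$, compute $\widetilde{H}'=2K/(1+(K^2-1)\cos^2(\varphi-\theta))$ (your $F_K'+1$), locate for $K>2$ the critical pair $\theta\pm\psi_*$ with $\cos^2\psi_*=(2K-1)/(K^2-1)$, and produce $K_\theta$ by an intermediate-value argument in $K$ applied to exactly your quantity $m(K)+\theta=\widetilde{H}_K(\theta+\psi_*)-(\theta+\psi_*)$ (this is the content of Lemma~\ref{s4l7}). Where you genuinely diverge, to your advantage, is in the two most laborious steps. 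First, the paper establishes uniqueness of $K_\theta$ (Lemma~\ref{s4l001}) by explicitly solving the tangency condition, after a page of trigonometric manipulation, as $\theta=\cos^{-1}[((2K-1)/(K^2-1))^{3/2}(K-1)]$ and then checking monotonicity of this expression in $K$; your envelope identity $dm/dK=-\sin(2\psi_*)/(2K)<0$ gives strict monotonicity of the critical value in two lines and packages existence, uniqueness and the trichotomy in one stroke (at the cost of losing the closed form for $K_\theta$, which the paper never actually uses later). Second, the paper confines fixed rays to the sectors $\mathbb{F}^{\pm}_{\theta}$ by a quadrant-by-quadrant chase (Lemma~\ref{s4l1}); you obtain the same exclusion by observing that the second branch of $F_K$, over $\psi\in(\pi/2,3\pi/2]$, takes values in $(\pi/2,3\pi/2]$, disjoint from the residues of $-\theta$, which is where your bound $m(K)>-\pi/2$ earns its keep. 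The remaining bookkeeping (one transverse solution per monotone arc, stability from the sign of $F_K'$, one-sided behaviour of the neutral ray from $F_K\geq m(K)$ near $\psi_*$) is equivalent to the paper's interval expansion/contraction arguments in Lemmas~\ref{s4l5}--\ref{s4l9}. Two small points: you should note that $\psi_*$ is a nondegenerate minimum of $F_K$ for $K>2$ (since $F_K''$ vanishes only at $\psi\equiv 0$) to justify ``equality only at $\psi_*$''; and the endpoint cases $\theta=0$ (where $K_0=2$ and the tangency at $\psi_*=0$ collides with the persistent solution, so the two-ray case degenerates to a single neutral ray) and $\theta=\pi/2$ (where $K_\theta=\infty$) need to be flagged --- but these are imprecisions already present in the theorem as stated, which the paper also handles by separate discussion.
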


We next investigate the pre-images of these fixed rays. If $H$ has two or three fixed rays, then denote by $\Lambda$ the basin of attraction of the fixed ray that is not locally repelling.

\begin{theorem}\label{s2t2}
If $H$ has one fixed ray $R_\phi$ then $\{H^{-k}(R_\phi)\}_{k=0}^{\infty}$ is dense in $\C$. If $H$ has two or three fixed rays,
then $\Lambda$ is dense in $\C$.
\end{theorem}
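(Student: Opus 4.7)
The plan is to reduce both parts of the statement to density results for the induced circle map $\widetilde{H}:S^1\to S^1$, and then analyze $\widetilde{H}$ separately in each of the three cases of Theorem~\ref{s2t1}. Since $H$ preserves rays, the preimages $H^{-k}(R_\phi)$ and the basin $\Lambda$ are each unions of rays through the origin. Writing $z=re^{i\varphi}$ with $r>0$, one has $z\in H^{-k}(R_\phi)$ if and only if $\widetilde{H}^k(\varphi)=\phi$, and $z\in\Lambda$ if and only if $\widetilde{H}^n(\varphi)\to\phi_a$ as $n\to\infty$, where $\phi_a$ denotes the argument of the unique fixed ray that is not locally repelling. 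Hence each density statement in $\C$ is equivalent to the density in $S^1$ of the corresponding set of arguments.

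For case (i), $\widetilde{H}$ is a smooth degree-$2$ covering of $S^1$ (the only critical point of $H$ is at the origin, which does not lie on $S^1$) with a unique fixed point $\phi$, which is repelling. Set $E=\bigcup_{k\ge 0}\widetilde{H}^{-k}(\phi)$. I would first show that the closure $C=\overline{E}$ is completely invariant, using that $\widetilde{H}$ is open and the preimage of a closed set is closed. If $C\neq S^1$, the open complement $S^1\setminus C$ is also completely invariant, and its connected components are open arcs permuted by $\widetilde{H}$. Using that $\phi$ is the only fixed point, that $\widetilde{H}$ is monotone, and that the lift satisfies $F(x+2\pi)=F(x)+4\pi$, a length-growth argument (essentially a topological conjugacy with $z\mapsto z^2$ on $S^1$) forces the arcs to expand under iteration and eventually hit $\phi$, giving a contradiction.

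For cases (ii) and (iii), put $\Lambda_0=\{\varphi:\widetilde{H}^n(\varphi)\to\phi_a\}$ and $J=S^1\setminus\Lambda_0$. Then $J$ is closed and forward invariant and contains the locally repelling fixed point(s). The goal is to prove that $J$ has empty interior. The natural framework here is the dictionary between $\widetilde{H}$ and the boundary dynamics of a degree-$2$ Blaschke product $B$ having an attracting (case (iii)) or parabolic (case (ii)) fixed point on $S^1$; for such $B$ it is classical that the basin of the distinguished boundary fixed point is an open dense subset of $S^1$. Transferring this picture, any arc $V\subset J$ would have all its forward iterates contained in $J$ and thus bounded away from $\phi_a$, whereas the degree-$2$ stretching of lengths combined with attraction near $\phi_a$ forces some iterate of $V$ to enter the attracting side of $\phi_a$, contradicting forward invariance of $J$. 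The main obstacle is case (ii): the neutral fixed ray produces parabolic (polynomial) rather than hyperbolic convergence, so the quantitative estimates on how long $V$ takes to be absorbed are delicate and genuinely require the Blaschke-product machinery hinted at in the introduction.
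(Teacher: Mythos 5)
Your reduction of both statements to density questions for the induced circle map $\widetilde{H}$ is exactly the paper's first step, and your instinct to use degree-two Blaschke products in cases (ii) and (iii) points at the right machinery. But the proposal has a genuine gap, most visibly in case (i). Your argument there rests on the claim that $\widetilde{H}$ is ``essentially topologically conjugate to $z\mapsto z^2$,'' so that arcs in the complement of $\overline{\bigcup_k\widetilde{H}^{-k}(\phi)}$ must expand and eventually hit $\phi$. This is not justified, and the naive length-growth version is false as stated: by Lemma \ref{s4l3}, for $K>2$ there is an interval $J$ about $\theta$ on which $\widetilde{H}'<1$, and the one-fixed-ray regime includes all $K\in(2,K_\theta)$ (recall $K_\theta>2$ for $\theta\in(0,\pi/2)$), so $\widetilde{H}$ is genuinely non-expanding in part of case (i). More fundamentally, a smooth degree-two covering of $S^1$ with a unique repelling fixed point need not have dense backward orbits of that fixed point: such a map can carry an attracting cycle of period $\geq 2$ (or a priori wandering intervals), and the backward orbit of $\phi$ never enters the immediate basin of such a cycle. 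Ruling out these phenomena is the entire content of the theorem in case (i); ``monotone, degree two, one fixed point'' does not do it, and no topological conjugacy to $z\mapsto z^2$ exists without first excluding them.

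The paper closes this gap by an explicit computation (Lemma \ref{s5l1}): $\widetilde{H}$ coincides on $S^1$ with the degree-two Blaschke product $B(z)=(z^2+\mu)/(1+\overline{\mu}z^2)$, $\mu=e^{2i\theta}(K-1)/(K+1)$. Classical Fatou--Julia theory for such $B$ (Proposition \ref{Blaschke}) then gives everything: with one fixed point on $S^1$ the Julia set is all of $S^1$, so $\{B^{-k}(\phi)\}$ is dense there; with two or three fixed points on $S^1$ the Julia set is a Cantor subset of $S^1$, and the Denjoy--Wolff theorem together with normality of $\{B^n\}$ on the Fatou set shows that the dense set $F(B)\cap S^1$ lies in the basin of the non-repelling boundary fixed point. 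Your sketch for cases (ii)--(iii) invokes ``the dictionary'' with Blaschke boundary dynamics but never establishes that $\widetilde{H}$ actually is (or is topologically conjugate to) the boundary restriction of a Blaschke product --- that identification is the nontrivial step, and without it the appeal to the classical results is unsupported; the supplementary ``degree-two stretching'' argument you append suffers from the same non-expansion problem as case (i). To repair the proof, prove the identity of Lemma \ref{s5l1} (a direct computation starting from \eqref{polar}) and then run the Julia-set argument in all three cases uniformly.
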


Via the B\"{o}ttcher coordinate constructed in \cite{FF}, these theorems have analogues for mappings of the form $h(z)^2+c$ for $c \in \C$. We first make the following definition in analogy with complex dynamics.

\begin{definition}
\label{externalrays}
Let $f(z) = h(z)^2+c$. Then the \emph{external ray} $E_{\varphi}$ of $f$ with angle $\varphi \in [0,2\pi)$ is given by the image of the ray $R_{\varphi}$ under the quasiconformal B\"{o}ttcher coordinate $\psi = \psi(K,\theta, c)$ which conjugates $f$ to $H$. The external ray $E_{\varphi}$ is only defined in the range of $\psi$, that is, a neighbourhood of infinity.
\end{definition}

We remark that each $E_{\varphi}$ is an asymptotically conformal arc of a quasi-circle, since the B\"{o}ttcher coordinate is asymptotically conformal as $|z| \to \infty$. The collection $\{E_{\varphi} : \varphi \in [0,2\pi)\}$ foliates a neighbourhood of infinity. We define an external ray $E_{\varphi}$ which is fixed by $f$ to be attracting, repelling or neutral if the corresponding fixed ray $R_{\varphi}$ of $H$ is attracting, repelling or neutral respectively. The following corollary is an immediate application of Theorem \ref{s2t1}.

\begin{corollary}
Let $\theta \in (-\pi/2, \pi/2]$, $K>1$, $c \in \C$ and let $f(z)=h_{K,\theta}(z)^2+c$. Then, with $K_{\theta}$ as in Theorem \ref{s2t1},
\begin{itemize}
  \item for $K<K_{\theta}$, there is one fixed external ray of $f$ that is locally repelling;
  \item for $K=K_{\theta}$, there are two fixed rays, one of which is locally repelling and one that is neutral. Further, the neutral fixed ray is repelling on one side and attracting on the other;
  \item for $K>K_{\theta}$, there are three fixed rays, one of which is locally attracting and two that are locally repelling.
\end{itemize}
\end{corollary}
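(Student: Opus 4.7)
The plan is to transfer the dynamical information in Theorem \ref{s2t1} from $H$ to $f$ using the conjugacy provided by the B\"ottcher coordinate $\psi = \psi(K, \theta, c)$ constructed in \cite{FF}. Recall that $\psi$ is a quasiconformal homeomorphism defined on a neighbourhood of infinity that satisfies $f \circ \psi = \psi \circ H$ on its domain of definition.

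First I would observe that the fixed rays of $H$ correspond bijectively to the fixed external rays of $f$ lying in the range of $\psi$. Indeed, if $H(R_\phi) = R_\phi$, then by Definition \ref{externalrays} and the conjugacy relation,
\[
f(E_\phi) = f(\psi(R_\phi)) = \psi(H(R_\phi)) = \psi(R_\phi) = E_\phi,
\]
at least on the portion of $R_\phi$ inside the domain of $\psi$. Conversely, any external ray fixed by $f$ must pull back under $\psi$ to a ray fixed by $H$, because $\psi$ is a homeomorphism onto its image and the external rays foliate a neighbourhood of infinity in a one-to-one fashion.

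Second, by construction of Definition \ref{externalrays}, the external ray $E_\phi$ is classified as locally attracting, locally repelling, or neutral precisely when the corresponding fixed ray $R_\phi$ of $H$ has the same classification. Thus the trichotomy of Theorem \ref{s2t1}, together with the one-sided statement at $K = K_\theta$, transfers verbatim from fixed rays of $H$ to fixed external rays of $f$.

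Since the definitions of external ray and of its dynamical type have been set up precisely to be compatible with the conjugacy $\psi$, there is no real obstacle; the proof is a direct application of Theorem \ref{s2t1}. The only point that requires any comment is ensuring that $\psi$ is defined on enough of each ray $R_\phi$ for the invariance to make sense, which is immediate from the fact that $\psi$ is defined on a full neighbourhood of infinity and $H$ leaves this neighbourhood invariant.
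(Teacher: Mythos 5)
Your proposal is correct and follows the same route as the paper, which presents this corollary as an immediate consequence of Theorem \ref{s2t1}: the external rays are defined as images of the rays $R_\varphi$ under the B\"ottcher coordinate, their dynamical type is defined to be that of the corresponding ray of $H$, and the conjugacy of Theorem \ref{thmBot} makes fixed external rays of $f$ correspond bijectively to fixed rays of $H$. The only small caution is the direction of the conjugacy: Theorem \ref{thmBot} is stated as $H(\psi(z))=\psi(f(z))$, so your line $f(\psi(R_\phi))=\psi(H(R_\phi))$ implicitly uses the inverse convention, but this is a bookkeeping point the paper itself glosses over and does not affect the argument.
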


In particular, the value of $c$ plays no role in how many fixed external rays $f$ has. Theorem \ref{s2t2} also has the following immediate corollary.

\begin{corollary}
With the notation as above, if $f$ has one fixed external ray $E_\phi$ then $\{f^{-k}(E_\phi)\}_{k=0}^{\infty}$ is dense in a neighbourhood of infinity. If $f$ has two or three fixed external rays, then the basin of attraction of the non-repelling fixed external ray is dense in a neighbourhood of infinity.
\end{corollary}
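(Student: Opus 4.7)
My plan is to transfer both parts of Theorem \ref{s2t2} from the model map $H$ to $f$ via the B\"ottcher conjugacy $\psi$ constructed in \cite{FF}. Recall from Definition \ref{externalrays} that $\psi$ is a quasiconformal homeomorphism from a neighborhood $V$ of infinity onto a neighborhood $U$ of infinity, satisfying $f\circ\psi = \psi\circ H$ on $V$, and that $E_\varphi = \psi(R_\varphi \cap V)$ for each angle $\varphi$.

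The first step is to iterate the conjugacy to obtain $f^k\circ\psi = \psi\circ H^k$, from which I would deduce the key identity
\[ f^{-k}(E_\phi) \cap U \;=\; \psi\!\left( H^{-k}(R_\phi) \cap V \right) \]
for every $k \ge 0$. For the statement concerning a single fixed ray, Theorem \ref{s2t2} gives density of $\bigcup_{k\ge0} H^{-k}(R_\phi)$ in $\C$, hence density in the open set $V$. Applying the homeomorphism $\psi$ then yields density of $\bigcup_{k\ge0} f^{-k}(E_\phi) \cap U$ in $U$, which proves the first assertion.

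For the second assertion, I would define the basin of attraction of the non-repelling fixed external ray $E_{\phi_0}$ as the set of $z \in U$ whose forward orbit under $f$ remains in $U$ and for which $\arg\bigl(\psi^{-1}(f^n(z))\bigr) \to \phi_0$. By the conjugacy, this set equals $\psi(\Lambda \cap V)$, where $\Lambda$ is the basin of the corresponding fixed ray of $H$ supplied by Theorem \ref{s2t2}. Density of $\Lambda$ in $\C$ restricts to density of $\Lambda \cap V$ in $V$, and then transfers through the homeomorphism $\psi$ to density of the basin of $E_{\phi_0}$ in $U$.

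No serious obstacle is anticipated; the B\"ottcher coordinate does all the heavy lifting, and the deduction is essentially a soft topological transport. The only delicate point is to intersect pre-images and basins with $V$ before applying $\psi$, reflecting the fact that $\psi$ is only defined in a neighbourhood of infinity and that some pre-image branches of $f$ may lie outside $U$. Such extra branches can only enlarge the relevant sets, so the density statements are unaffected.
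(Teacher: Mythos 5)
Your proposal is correct and matches the paper's intent exactly: the paper states this corollary as an immediate consequence of Theorem \ref{s2t2} via the B\"ottcher coordinate, and your argument simply spells out the soft topological transport (a homeomorphic conjugacy carries pre-images to pre-images and basins to basins, and preserves density), including the right caveat about restricting to the domain of $\psi$. The only quibble is the direction of the conjugacy: Theorem \ref{thmBot} is stated as $H\circ\psi=\psi\circ f$ rather than $f\circ\psi=\psi\circ H$, but since $\psi$ is a homeomorphism between neighbourhoods of infinity this is immaterial to the argument.
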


We can use Theorem \ref{s2t2} to give a complete decomposition of the plane into dynamically important sets for $H$, that is, in the case $c=0$.
For a quasiregular mapping of polynomial type whose degree is larger than the distortion, it was proved in \cite{FN} that the escaping set is a connected neighbourhood of infinity. However, such mappings can have dynamically undesired behaviour outside the closure of the escaping set, for example in \cite{Bergweiler} a mapping is constructed which locally behaves like a winding mapping. We show that this does not happen for $H$. Recall that the escaping set of $H$ is given by
\[ I(H)=\{z\in\C\;|\;|H^n(z)|\to\infty \mbox{ as } n\to\infty\}.\]

\begin{corollary}\label{s2c1}
Let $K>1$, $\theta \in (-\pi/2, \pi/2]$ and $H(z) = h_{K,\theta}(z)^2$. Then $\C = I(H) \cup \partial I(H) \cup \mathcal{A}(0)$, where
$\mathcal{A}(0)$ is the basin of attraction of the fixed point $0$.
\end{corollary}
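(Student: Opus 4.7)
The plan is to show that the open set $F := \C \setminus (\overline{\mathcal{A}(0)} \cup \overline{I(H)})$ is empty; the corollary then follows by a short topological argument, since $F = \emptyset$ gives $\C \setminus I(H) = \overline{\mathcal{A}(0)} \cup \partial I(H)$, and as $\partial \mathcal{A}(0)$ and $\partial I(H)$ are nowhere dense (boundaries of open sets), the interior of $\C \setminus I(H)$ is exactly $\mathcal{A}(0)$, whence $\C = I(H) \cup \mathcal{A}(0) \cup \partial I(H)$. Throughout I will use that $\mathcal{A}(0)$ is open since $0$ is super-attracting, that $I(H)$ is open and fully invariant by \cite{FN}, and that on any fixed ray $R_\psi$ the induced one-dimensional dynamics is $t \mapsto |h(e^{i\psi})|^2 t^2$, with $[0,T_\psi)e^{i\psi} \subseteq \mathcal{A}(0)$ and $(T_\psi,\infty)e^{i\psi} \subseteq I(H)$ for $T_\psi := |h(e^{i\psi})|^{-2}$, so that $R_\psi \setminus (\mathcal{A}(0) \cup I(H)) = \{T_\psi e^{i\psi}\}$.

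Assuming for contradiction that an open ball $B$ lies in $F$, I split into the cases of Theorem \ref{s2t1}. In Case 1, density of $\bigcup_k H^{-k}(R_\phi)$ gives a preimage ray $\rho$ at some level $k$ meeting $B$ in a nondegenerate open arc $\rho \cap B$; since $H^k$ restricts to a homeomorphism $\rho \to R_\phi$, the image $H^k(\rho \cap B)$ is a nondegenerate open arc of $R_\phi$. But full invariance of $\mathcal{A}(0)$ and $I(H)$ together with $B \subseteq F$ forces $H^k(\rho \cap B) \subseteq R_\phi \setminus (\mathcal{A}(0) \cup I(H)) = \{T_\phi e^{i\phi}\}$, a single point, contradicting non-degeneracy.

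In Case 2, let $R_{\phi_a}$ be the non-repelling fixed ray, set $T_a := T_{\phi_a}$, and note that $\Lambda$ is open as the lift from $S^1$ of the open basin of the attracting (or semi-attracting) fixed angle $\phi_a$ for the induced circle map $\widetilde{H}$; density then makes $B \cap \Lambda$ a nonempty open set. For any $w \in B \cap \Lambda$, $\arg H^n(w) \to \phi_a$, the orbit is bounded (since $w \notin I(H)$), and cannot accumulate at $0$ (else backward invariance of $\mathcal{A}(0)$ would give $w \in \mathcal{A}(0)$); combined with the one-dimensional dynamics on $R_{\phi_a}$ this forces $\omega(w) = \{T_a e^{i\phi_a}\}$, so $H^n(w) \to T_a e^{i\phi_a}$. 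At this fixed point the derivative has radial eigenvalue $2$ and tangential eigenvalue $\widetilde{H}'(\phi_a) \le 1$, so the point is a hyperbolic saddle when $K > K_\theta$ and semi-parabolic when $K = K_\theta$. In either case, smoothness of $H$ away from $0$ and the (semi-)stable manifold theorem identify the set of points attracted to $T_a e^{i\phi_a}$ as a countable union of smooth $1$-dimensional curves, which cannot contain the nonempty open set $B \cap \Lambda$.

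The main obstacle is the semi-parabolic sub-case $K = K_\theta$, where the tangential eigenvalue is exactly $1$ and the elementary hyperbolic stable manifold theorem does not directly apply; once the semi-hyperbolic version has been invoked to establish the $1$-dimensionality of the stable set at $T_a e^{i\phi_a}$, the topological finish from the first paragraph completes the proof of Corollary \ref{s2c1}.
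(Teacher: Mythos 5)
Your reduction of the corollary to ``$F=\C\setminus(\overline{\mathcal{A}(0)}\cup\overline{I(H)})$ is empty'' does not actually yield the statement. $F=\emptyset$ says only that $\mathcal{A}(0)\cup I(H)$ is dense in $\C$, whereas the corollary asserts that $\C\setminus\overline{I(H)}$ \emph{equals} $\mathcal{A}(0)$; what still has to be excluded is a point $z\in\partial\mathcal{A}(0)\setminus\overline{I(H)}$, i.e.\ a point whose orbit is bounded and bounded away from $0$, which is approximated by $\mathcal{A}(0)$ but not by $I(H)$. Nowhere-density of $\partial\mathcal{A}(0)$ and $\partial I(H)$ prevents such points from filling an open set, not from existing: your step ``the interior of $\C\setminus I(H)$ is exactly $\mathcal{A}(0)$'' only shows that this interior is contained in $\operatorname{int}(\overline{\mathcal{A}(0)})$, and the identity $\operatorname{int}(\overline{\mathcal{A}(0)})=\mathcal{A}(0)$ is not automatic for an open set --- it is essentially the content of the corollary. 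Since both of your case analyses are framed as ``no open ball lies in $F$'', they establish only the density statement, and this gap is never closed. There are also problems inside Case 2: when $K=K_\theta$ the basin is a union of \emph{half-open} intervals of rays (Lemma \ref{s4l10}), so $B\cap\Lambda$ need not be open; and the appeal to an unspecified ``semi-stable manifold theorem'' at the fixed point with eigenvalues $2$ and $1$ is precisely the step that would need proof, as you yourself acknowledge.

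Both defects disappear once you use the radial homogeneity of $H$, which is the ingredient underlying the paper's proof. Since $h$ is $\R$-linear, $H(\lambda z)=\lambda^2 H(z)$ for $\lambda>0$, hence $|H^n(\lambda z)|=\lambda^{2^n}|H^n(z)|$. If $z=re^{i\varphi}\notin\mathcal{A}(0)\cup I(H)$, then its orbit lies in the annulus $K^{-2}\leq|w|\leq 1$ (it cannot leave $\overline{\D}$ without escaping, and cannot enter the ball $\{|w|<K^{-2}\}\subset\mathcal{A}(0)$ by complete invariance), so every point $\lambda re^{i\varphi}$ with $\lambda>1$ escapes and every point with $\lambda<1$ is attracted to $0$. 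Thus \emph{every} ray --- not only fixed rays and their preimages --- meets $\C\setminus(\mathcal{A}(0)\cup I(H))$ in at most one point, and that point is a limit of escaping points on the same ray, hence lies in $\partial I(H)$. This proves the corollary directly for every $z\in\C$, closes the topological gap above, and removes any need for stable or centre manifolds; it is also what justifies the paper's assertion that each ray of the dense family of Theorem \ref{s2t2} ``breaks up in the same way'' as a fixed ray.
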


The next result is a refinement of a result from \cite{FF}. We first make the following definition.

\begin{definition}
Given a plane domain $U$, a quasiregular mapping $f:U \to \C$ is called \emph{nowhere uniformly quasiregular} if for every open set $V \subset U$, the maximal dilatation of $f^n \av _{V}$ is unbounded.
\end{definition}

For example, the quasiconformal mapping $f(x+iy) = Kx+iy$ is easily seen to be nowhere uniformly quasiregular for any $K>1$.

\begin{theorem}\label{s2t3}
Let $K>1$ and $\theta \in (-\pi/2, \pi/2]$. Then the mapping $h_{K,\theta}(z)^2 + c$ is nowhere uniformly quasiregular.
\end{theorem}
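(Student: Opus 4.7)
I argue by contradiction. Suppose $K(f^n|_V)\le M$ for all $n$ on some open $V\subset\C$, where $f = h_{K,\theta}(z)^2+c$. The strategy is to reduce to the model $H(z)=h(z)^{2}$ via the B\"ottcher coordinate, and then to exploit the explicit ray dynamics of Theorem \ref{s2t1} together with rigidity for degree-two Blaschke products.

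The first reduction is from $f$ to $H$ on a neighbourhood of infinity. By Definition \ref{externalrays}, $\psi$ is a quasiconformal conjugacy from $H$ to $f$ on a neighbourhood $U_\infty$ of infinity, so $f^n=\psi\circ H^n\circ\psi^{-1}$ there, and since $K(\psi)$ is a fixed constant, uniform quasiregularity of $f$ on an open $V\subset U_\infty$ is equivalent to that of $H$ on $\psi^{-1}(V)$. To handle open sets not contained in $U_\infty$, I use that $f$ is of polynomial type of degree two, so by \cite{FN} the escaping set $I(f)$ is a connected neighbourhood of infinity. For $V$ meeting $I(f)$, some open $V'\subset V$ is mapped into $U_\infty$ by some iterate $f^k$, and the inequality
\[
K(f^n|_{V'})\;\ge\;K(f^{n-k}|_{f^k(V')})/K(f^k|_{V'})
\]
transfers unboundedness from the near-infinity picture back to $V'$. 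For $V$ disjoint from $I(f)$ (so $V$ lies in the filled Julia set), a direct argument is needed: uniform quasiregularity of $f|_V$ together with a local version of Hinkkanen's theorem \cite{Hinkkanen} would produce an $f$-invariant measurable conformal structure on $V$, which can be ruled out by an explicit computation showing that iteration of the constant-dilatation map $f$ accumulates Beltrami coefficients along the orbit rather than averaging them.

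The core step is that $H$ is nowhere uniformly quasiregular on $U_\infty$. Assume $K(H^n|_W)\le M$ for some open $W$. By the local form of \cite{Hinkkanen}, there is a quasiconformal $\phi$ on $W$ with $g:=\phi\circ H\circ\phi^{-1}$ holomorphic of degree two on $\phi(W)$. Since $H$ has a super-attracting fixed point at infinity, so does $g$, and the classical holomorphic B\"ottcher theorem conjugates $g$ near infinity to $z\mapsto z^{2}$; hence $H$ is quasiconformally conjugate near infinity to $z^{2}$, and the induced circle map $\widetilde H$ of Theorem \ref{s2t1} is quasisymmetrically conjugate to the doubling map. For $K\ge K_\theta$ this is an immediate contradiction, since $\widetilde H$ has two or three fixed points by Theorem \ref{s2t1} while the doubling map has only one, and any topological conjugation preserves the number of fixed points. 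For $K<K_\theta$, both $\widetilde H$ and the doubling map have a unique fixed point; here the degree-two Blaschke product and M\"obius machinery advertised in the introduction enters: the explicit formula
\[
\widetilde H'(\varphi)\;=\;\frac{2K(1+\tan^{2}(\varphi-\theta))}{K^{2}+\tan^{2}(\varphi-\theta)}
\]
evaluated at the unique fixed ray, combined with the classical boundary-multiplier identity $\sum(m_i-1)^{-1}=d-1$ for degree-$d$ Blaschke products, yields a multiplier constraint that fails for $K<K_\theta$, completing the contradiction.

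The main obstacle I expect is the $K<K_\theta$ subcase, where the topological fixed-ray count does not distinguish $\widetilde H$ from doubling and one must use the finer Blaschke/M\"obius rigidity to extract a multiplier contradiction, carefully tracking the multiplier through the composition of quasiconformal conjugacies $\psi$ and $\phi$. The filled-Julia case $V\cap I(f)=\emptyset$, treated by the invariant-conformal-structure argument indicated above, is a secondary but non-trivial point.
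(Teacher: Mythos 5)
Your proposal does not reach the statement, and it misses the mechanism that actually makes the theorem true. The paper's proof (Lemmas \ref{s6l2} and \ref{s6l3}) is a direct computation with Beltrami coefficients: on a fixed ray $R_\phi$ the dilatation of $H^n$ is $A^{n-1}(\mu_H)$ for a \emph{hyperbolic} M\"obius map $A$ of $\D$ (Proposition \ref{s6l1}), so $|\mu_{H^n}|\to 1$ there; density of the preimages of $R_\phi$, respectively of the basin $\Lambda$ (Theorem \ref{s2t2}), together with the composition formula for complex dilatation and the Gill/Mandell--Magnus convergence theorem (Theorem \ref{sMlMM}), then forces $K_z(H^n)\to\infty$ at \emph{every} point. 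None of this appears in your argument. In its place you invoke a ``local version of Hinkkanen's theorem'': Hinkkanen's result is a global statement about uniformly quasiregular maps of $\C$, and even if one grants a qc conjugacy of $H|_W$ to a holomorphic map on an arbitrary small open set $W$, the inference ``hence $H$ is quasiconformally conjugate near infinity to $z^2$'' is a non sequitur --- $W$ is not a neighbourhood of infinity, and a conjugacy on $W$ says nothing about the B\"ottcher behaviour at $\infty$. The treatment of open sets disjoint from $I(f)$ (``an invariant measurable conformal structure\dots ruled out by an explicit computation'') is an unproved assertion, not an argument.

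The endgame is also unworkable in the hardest case. When $K<K_\theta$ the induced map $\widetilde H$ agrees on $S^1$ with a degree-two Blaschke product $B$ whose Julia set is all of $S^1$ (Lemma \ref{s5l1} and Proposition \ref{Blaschke}); such a $B$ restricted to $S^1$ \emph{is} topologically --- indeed quasisymmetrically --- conjugate to the doubling map, so no fixed-point count or ``multiplier constraint'' can produce the contradiction you want, and quasisymmetric conjugacies do not preserve multipliers in any case. The identity $\sum(m_i-1)^{-1}=d-1$ is neither correctly stated nor correctly applicable here. The essential point you are missing is that the failure of uniform quasiregularity is not detectable from the conformal (ray/circle) dynamics at all: it is the degeneration of the Beltrami coefficients $\mu_{H^n}$ along orbits whose arguments converge to a fixed ray, which is exactly what the hyperbolic M\"obius iteration in the paper quantifies.
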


This theorem implies a result of \cite{FF}, that $h(z)^2+c$ cannot be quasiconformally conjugate to $z^2+c'$ in a neighbourhood of infinity.
We will next see that $h_{K_1,\theta_1}(z)^2+c_1$ is not quasiconformally conjugate to $h_{K_2,\theta_2}(z)^2+c_2$ for most pairs
$(K_1,\theta_1) \neq (K_2,\theta_2)$ in $(1,\infty) \times (-\pi/2, \pi/2]$. We conjecture that this is true for every such pair $(K_1,\theta_1) \neq (K_2,\theta_2)$, but our methods do not show this.

To this end, let $R_{\phi}$ be a fixed ray of $H$.
We recall from \cite{FF} that the complex dilatation of $H^n$ at $z\in R_{\phi}$ is given by
$\mu_n(z)=A^{n-1}(\mu_1)$ where $A$ is the M\"{o}bius transformation
\[ A(w) = \frac{ \mu_1 + e^{-i\phi}w} {1+e^{-i\phi}\overline{\mu_1} w}\]
and $\mu_1=e^{2i\theta}(K-1)/(K+1)\in\mathbb{D}$. The trace of $A$, denoted $\tr(A)$, is the trace of the matrix representing $A$, which is normalized to have determinant $1$.

Given $K_1,K_2 >1$ and $\theta_1,\theta_2 \in (-\pi/2, \pi/2]$, let $H_1,H_2$ be $h_{K_1,\theta_1}^2, h_{K_2,\theta_2}^2$ respectively.
Denote the fixed rays of $H_1$ by $R_{\phi_i}$ and the fixed rays of $H_2$ by $R_{\psi_j}$, the corresponding M\"{o}bius transformations of each fixed ray $R_{\phi_i}$ by $A_i(z)$ and the corresponding M\"{o}bius transformations of each fixed ray $R_{\psi_j}$ by $B_j(z)$.
We prove the following.

\begin{theorem}\label{s2t4}
With the notation above, there is no quasiconformal conjugacy between $H_1$ and $H_2$ in any neighbourhood of infinity if any of the following conditions hold:
\begin{enumerate}
\item the mappings $H_1,H_2$ have different numbers of fixed rays;
\item $H_1$ and $H_2$ both have one fixed ray, $R_{\phi_1}$ and $R_{\psi_1}$ respectively, and $\tr(A_1)^2\neq \tr(B_1)^2$;
\item if $H_1$ and $H_2$ both have two fixed rays $R_{\phi_i}$ and $R_{\psi_i}$ for $i= 1,2$, where $\phi_1>\phi_2$ and $\psi_1>\psi_2$, and $\tr(A_i)^2 \neq \tr(B_i)^2$ for some $i$;
\item if $H_1$ and $H_2$ both have three fixed rays $R_{\phi_i}$ and $R_{\psi_j}$, $i,j\in\{0,1,2\}$ respectively, where $\phi_1>\phi_0>\phi_2$ and $\psi_1>\psi_0>\psi_2$, and $\tr(A_i)^2\neq \tr(B_i)^2$ for some $i$.
\end{enumerate}
\end{theorem}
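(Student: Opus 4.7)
The plan is to show that any quasiconformal conjugacy $\phi$ between $H_1$ and $H_2$ on neighbourhoods of infinity must preserve two pieces of data: the combinatorial pattern of fixed rays (their number, and cyclic order on $S^1$) and the $\mathrm{PSL}(2,\C)$-conjugacy class of the M\"obius transformation attached to each fixed ray. Since $\tr(\cdot)^2$ is a complete $\mathrm{PSL}(2,\C)$-conjugacy invariant once the matrix representative is normalised to have determinant $1$, preserving these two pieces of data rules out all four possibilities in the statement at once.

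For case (i), the idea is to pass to the induced circle maps $\widetilde{H}_i\colon S^1\to S^1$, which are orientation-preserving degree-two coverings whose fixed points are exactly the fixed rays of $H_i$. A quasiconformal map defined in a neighbourhood of $\infty$ (which may be normalised to fix $\infty$) produces a homeomorphism of the circle at infinity conjugating $\widetilde{H}_1$ to $\widetilde{H}_2$; this is where the QC-invariant dynamical decomposition supplied by Corollary \ref{s2c1}, together with the density results of Theorem \ref{s2t2}, does the work, since the basin of $0$, the escaping set $I(H)$ and its boundary are all preserved by $\phi$. Topological conjugacy of circle maps then preserves both the number of fixed points and their cyclic order, settling (i) and aligning fixed rays correctly in (iii)-(iv).

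For cases (ii)-(iv), pick a fixed ray $R_{\phi_i}$ of $H_1$, its matched fixed ray $R_{\psi_i}$ of $H_2$ under $\phi$, and a point $z\in R_{\phi_i}$ of large modulus. From the conjugacy relation $H_2^n=\phi\circ H_1^n\circ\phi^{-1}$ and the standard composition rule for Beltrami coefficients, the sequence $\mu_{H_2^n}(\phi(z))$ is obtained from the sequence $\mu_{H_1^n}(z)=A_i^{n-1}(\mu_1^{H_1})$ by applying a M\"obius transformation built from $\mu_\phi(H_1^n(z))$ and $\mu_{\phi^{-1}}(\phi(z))$. On the other hand, the evolution of $\mu_{H_2^n}$ along (or asymptotic to) $R_{\psi_i}$ is by $B_i$ acting on $\mu_1^{H_2}$. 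Matching these two orbits in $\mathbb{D}$ forces $B_i$ to be M\"obius-conjugate to $A_i$, and hence $\tr(A_i)^2=\tr(B_i)^2$.

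The hard part will be making this matching rigorous, since $\phi(z)$ generally does not lie on $R_{\psi_i}$ but only tends to it. One must control $\mu_{H_2^n}(\phi(z))$ by exploiting that the dilatation of $\phi$ along an escaping forward orbit stabilises asymptotically (by asymptotic conformality of the B\"ottcher-type coordinate from \cite{FF}, or via a recurrence/compactness argument in $\mathbb{D}$), and then extract the limiting M\"obius transformation conjugating $A_i$ to $B_i$ from the asymptotic dilatation of $\phi$ at infinity in the relevant direction. Weaving together the composition law for Beltrami coefficients, the fixed-ray evolution formula, and the invariance of $\tr^2$ under $\mathrm{PSL}(2,\C)$-conjugation is the technical core of the argument.
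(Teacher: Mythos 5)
There is a genuine gap at the foundation of your argument for case (i): you assert that a quasiconformal conjugacy defined on a neighbourhood of infinity ``produces a homeomorphism of the circle at infinity conjugating $\widetilde{H}_1$ to $\widetilde{H}_2$''. A quasiconformal self-map of a neighbourhood of infinity in $\C$ does not induce any map on directions at infinity: maps such as $z\mapsto z|z|^{i\varepsilon}$ are quasiconformal, fix $\infty$, and send every ray to a spiral, so there is no circle at infinity on which $\Psi$ acts; and the invariance of $I(H)$, $\partial I(H)$ and $\mathcal{A}(0)$ (which in any case concerns the whole plane, not just the neighbourhood where the conjugacy is defined) does not repair this. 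The paper has to confront exactly this possibility: when $H_2$ has one fixed ray and $H_1$ has several, it shows (Lemma \ref{s7l3}) that each $\Psi(R_{\phi_i})$ must meet a preimage of $R_\psi$ by the density statement of Theorem \ref{s2t2}, forcing $\tr(A_i)^2=\tr(A_j)^2$ for all $i,j$, and then rules this out by the explicit computations of Lemmas \ref{s4lT0}--\ref{s4lT2} showing $\cos\phi_i\neq\cos\phi_j$ for a suitable pair; for two versus three rays it needs a separate argument (Lemma \ref{s7twothree}) that first proves $\Psi$ carries $\overline{\Lambda_1^*}$ to $\overline{\Lambda_2^*}$ and then derives a contradiction by lifting to a strip and using that a quasiconformal map of the strip keeps the image of the geodesic $\R$ within bounded hyperbolic distance of $\R$. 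None of this follows from a purported circle conjugacy.

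For cases (ii)--(iv) your outline points in the right direction but stops short at precisely the step you flag as ``the hard part''. The paper does not attempt to force $B_i$ to be M\"obius-conjugate to $A_i$; it only compares growth rates. Sub-multiplicativity of distortion gives $1/C\leq K_{H_1^n}(z)/K_{H_2^n}(\Psi(z))\leq C$ with $C=K_\Psi^2$ (Lemma \ref{QCE}); along $R_{\phi_i}$ one has $K_{H_1^n}(z)=O(1/k^{n-1})$, while along the orbit of $\Psi(z)$, whose arguments only converge to $\psi_j$, the dilatation is governed by a composition $B_1\circ\cdots\circ B_{n-1}$ of \emph{varying} hyperbolic M\"obius maps with $B_m\to B_j$. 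The crucial ingredient is Theorem \ref{sMl3} (resting on the Gill and Mandell--Magnus results, Theorem \ref{sMlMM}), which gives $d_h(0,t_n(w))=\log\bigl[1/\prod\ell_m\bigr]+O(1)$ with $\ell_m\to\ell$; if $k\neq\ell$, i.e. $\tr(A_i)^2\neq\tr(B_j)^2$, the ratio of distortions tends to $0$ or $\infty$, contradicting the two-sided bound. Your proposal names neither this composition theorem nor any substitute for it, and the alternative mechanisms you suggest (asymptotic conformality of the B\"ottcher coordinate, a compactness argument in $\D$) do not obviously yield the required uniform control, so as written the argument for the trace identity is incomplete.
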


We can rule out some further cases in the following corollary.

\begin{corollary}\label{s2ct4}
If $\theta\in[0,\pi/2)$ is fixed and $K_1,K_2>1$ with $K_1 \neq K_2$, then $H_{K_1,\theta}$ and $H_{K,\theta_2}$ are not quasiconformally equivalent on any neighbourhood of infinity.
\end{corollary}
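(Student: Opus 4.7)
The plan is to apply Theorem \ref{s2t4} after analyzing how $\tr(A)^2$ varies along the one-parameter family $K\mapsto H_{K,\theta}$. First, I would compute the trace invariant. Writing $A(w)=(\mu_1+e^{-i\phi}w)/(1+e^{-i\phi}\overline{\mu_1}w)$ as a matrix with trace $1+e^{-i\phi}$ and determinant $e^{-i\phi}(1-|\mu_1|^2)$, normalizing to unit determinant, and using $|\mu_1|^2=(K-1)^2/(K+1)^2$, a short calculation yields
\[
\tr(A)^2 \;=\; \frac{(1+\cos\phi)(K+1)^2}{2K} \;=\; \cos^2(\phi/2)\,\frac{(K+1)^2}{K}.
\]

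Next I would case-split using the bifurcation threshold $K_\theta$ of Theorem \ref{s2t1}. If $K_1$ and $K_2$ lie on different sides of $K_\theta$ (or exactly one equals $K_\theta$), then $H_1=H_{K_1,\theta}$ and $H_2=H_{K_2,\theta}$ have different numbers of fixed rays, and Theorem \ref{s2t4}(1) applies. Since $K_1\neq K_2$, the only remaining cases are $K_1,K_2\in(1,K_\theta)$ (one fixed ray each) and $K_1,K_2\in(K_\theta,\infty)$ (three fixed rays each).

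In the one-fixed-ray regime, let $\phi(K)$ denote the fixed-ray angle, so the trace invariant along the family is $F(K):=(1+\cos\phi(K))(K+1)^2/(2K)$. For $\theta=0$, the symmetry $H(\overline{z})=\overline{H(z)}$ forces $\phi(K)\equiv 0$, so $F(K)=(K+1)^2/K$ and $F'(K)=(K^2-1)/K^2>0$ for $K>1$. For $\theta\in(0,\pi/2)$, I would differentiate the fixed-ray equation
\[
\phi-2\theta\;\equiv\;2\arctan\!\bigl(\tan(\phi-\theta)/K\bigr)\pmod{\pi}
\]
implicitly in $K$, substitute into
\[
\frac{d}{dK}\log F \;=\; -\tan(\phi/2)\,\frac{d\phi}{dK} \;+\; \frac{K-1}{K(K+1)},
\]
and deduce strict monotonicity of $F$ on $(1,K_\theta)$. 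Hence $\tr(A_1)^2\neq\tr(B_1)^2$, so Theorem \ref{s2t4}(2) applies. The three-fixed-ray case is treated analogously via the \emph{central} branch $\phi_0(K)$, which continues the unique fixed-ray angle across the bifurcation at $K_\theta$: the same implicit-differentiation argument shows $K\mapsto(1+\cos\phi_0(K))(K+1)^2/(2K)$ is strictly monotone on $(K_\theta,\infty)$, and Theorem \ref{s2t4}(4) applies at the matched central index.

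The main obstacle is establishing strict monotonicity of $F$ along the main fixed-ray branch when $\theta\in(0,\pi/2)$; the restriction $\theta\in[0,\pi/2)$ in the statement reflects exactly where this sign analysis can be made uniform, while $\theta=\pi/2$ appears to require separate treatment because certain symmetries of the fixed-ray equation degenerate there. The side branches in the three-fixed-ray regime turn out to be non-monotone (for $\theta=0$ one computes $F_{\pm}(K)=(K+1)^2/(2(K-1))$, which has a minimum at $K=3$), so it is essential to use the central branch rather than the outer ones.
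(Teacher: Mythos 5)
Your overall strategy --- reduce to Theorem \ref{s2t4} and show that the invariant $\tr(A)^2=(1+\cos\phi)(K+1)^2/(2K)$ separates different values of $K$ along a matched fixed-ray branch --- is the same as the paper's, and your trace computation and your dispatch of the mixed cases via part (1) of Theorem \ref{s2t4} are fine. The genuine gap is the step ``deduce strict monotonicity of $F$ on $(1,K_\theta)$'': that is where all the work lies, and nothing in the proposal delivers it. For $\theta\in(0,\pi/2)$ the unique fixed ray has angle $\phi(K)\in(\theta-\pi/2,0)$, and $\phi(K)$ is strictly \emph{decreasing} in $K$ (the paper proves this by comparing $\widetilde{h}_{K_1,\theta}$ with $\widetilde{h}_{K_2,\theta}$ directly). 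Hence $1+\cos\phi(K)$ decreases while $(K+1)^2/(2K)$ increases: in your own formula for $\frac{d}{dK}\log F$ the term $-\tan(\phi/2)\,\phi'(K)$ is negative (as $\phi<0$ and $\phi'<0$) while $\frac{K-1}{K(K+1)}$ is positive, so the sign of $F'$ is a genuine competition requiring a quantitative bound on $|\phi'(K)|$. Your own computation of the outer branches at $\theta=0$, where $F_{\pm}(K)=(K+1)^2/(2(K-1))$ fails to be monotone, shows that such competitions really can go the wrong way, so this cannot be waved through. (When you try to close this gap by following the paper, which compares the two factors separately around equation \eqref{ct4e1}, check the direction of that rearrangement carefully, precisely because the two factors move in opposite directions.)

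There is also a factual error: the central branch $\phi_0(K)$ does \emph{not} continue the unique fixed-ray angle across $K_\theta$ when $\theta\in(0,\pi/2)$. By Lemmas \ref{s4l5}--\ref{s4l9} the fixed ray persisting for all $K>1$ is the repelling one in $\widetilde{\mathbb{F}}_\theta^-$ (negative angle), which becomes $\phi_2$ in the three-ray regime; $\phi_0$ and $\phi_1$ are born together at $K=K_\theta$ from the neutral fixed point in $\widetilde{\mathbb{F}}_\theta^+$. Only for $\theta=0$ does the central branch persist. Using the central branch \emph{within} the three-ray regime is still a sensible idea --- there $\phi_0\in(2\theta,\theta+\pi/2)$ is positive, so if $\phi_0(K)$ is decreasing then both factors of $\tr^2$ increase and monotonicity is immediate, and Lemma \ref{s7l6} guarantees a quasiconformal equivalence matches attracting ray to attracting ray --- but you would then have to prove that $\phi_0(K)$ is decreasing, which you do not. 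The paper instead runs the whole argument on the single persistent branch in $\widetilde{\mathbb{F}}_\theta^-$, whose monotonicity in $K$ it establishes, treating $\theta=0$ separately via $\phi\equiv 0$. Finally, $\theta=\pi/2$ is not excluded because ``the sign analysis degenerates'': there the unique fixed ray is $R_0$ for every $K$, so $\tr(A)^2=(K+1)^2/K$ is strictly increasing and that case would be the easiest of all.
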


The paper is organized as follows. In $\S3$ we gather some preliminary material including stating some results from the iteration theory of M\"{o}bius maps, the proofs of which are postponed to $\S9$. In $\S4$ we study the ray structure of the mapping $H$ and prove Theorem~\ref{s2t1}. In $\S5$ we prove Theorem~\ref{s2t2} and use it in $\S6$ to prove Corollary \ref{s2c1} and in $\S7$ to prove Theorem~\ref{s2t3}. Then in $\S8$ we prove Theorem~\ref{s2t4}. Finally $\S9$ is an appendix on some results on M\"{o}bius maps.

The authors wish to thank Vladimir Markovic and Sebastian van Strien for enlightening conversations.

\section{Preliminaries}

\subsection{Quasiregular mappings}

A \emph{quasiconformal} mapping $f:\C \to \C$ is a homeomorphism such that $f$ is in the Sobolev space $W^1_{2, loc}(\C)$ and there exists $k\in[0,1)$ such that
the \emph{complex dilatation} $\mu_f = f_{\zbar}/ f_z$ satisfies
\[ | \mu_f(z) | <k\]
almost everywhere in $\C$. See for example \cite{FM} for more details on quasiconformal mappings.
The \emph{distortion} of $f$ at $z\in\C$ is
\[K_f(z):=\frac{1+|\mu_f(z)|}{1-|\mu_f(z)|}.\]
A mapping is called $K$-quasiconformal if $K_f(z) \leq K$ almost everywhere. The smallest such constant is called the \emph{maximal dilatation} and denoted by $K_f$.
If we drop the assumption on injectivity, then $f$ is a \emph{quasiregular mapping}. See for example \cite{IM,Rickman} for the theory of quasiregular mappings.
In the plane, every quasiregular mapping has a useful decomposition.

\begin{theorem}[Stoilow factorization, see for example \cite{IM} p.254]
\label{Stoilow}
Let $f:\C \rightarrow \C$ be a quasiregular mapping. Then there exists an analytic function $g$ and a quasiconformal mapping $h$ such that $f = g \circ h$.
\end{theorem}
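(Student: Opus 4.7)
The plan is to follow the classical strategy: absorb the complex dilatation of $f$ into a quasiconformal change of coordinates via the measurable Riemann mapping theorem, then observe that what remains is holomorphic by Weyl's lemma.

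First, I would let $\mu_f = f_{\bar z}/f_z$ denote the complex dilatation of $f$, which is a measurable function on $\C$ with $\|\mu_f\|_\infty \leq k < 1$ since $f$ is $K$-quasiregular. Applying the measurable Riemann mapping theorem of Ahlfors and Bers to the Beltrami coefficient $\mu_f$ produces a quasiconformal homeomorphism $h: \C \to \C$ whose complex dilatation $\mu_h$ agrees with $\mu_f$ almost everywhere (one may normalize by fixing three points, e.g.\ $0,1,\infty$, if uniqueness is needed later).

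Next, I would define $g := f \circ h^{-1}$. Since $h$ is a homeomorphism of $\C$ and both $f$ and $h^{-1}$ lie in $W^{1,2}_{loc}$, the composition $g$ also lies in $W^{1,2}_{loc}$. Invoking the standard composition rule for complex dilatations, the dilatation of $g$ at the point $h(z)$ is a Mobius-type expression in $\mu_f(z)$ and $\mu_h(z)$ whose numerator is a nonzero multiple of $\mu_f(z) - \mu_h(z)$. Since $\mu_f = \mu_h$ almost everywhere, we conclude $\mu_g = 0$ almost everywhere. Hence $g$ is $1$-quasiregular.

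Finally, any $W^{1,2}_{loc}$ map satisfying $g_{\bar z} = 0$ almost everywhere is holomorphic by Weyl's lemma (equivalently, every $1$-quasiregular planar map is analytic). Setting $f = g \circ h$ produces the desired factorization. The main obstacle is the measurable Riemann mapping theorem itself, which is a deep existence result resting on Calder\'on--Zygmund singular integral theory; for the expository purposes of this paper, I would simply quote it from the references. The remaining ingredients, namely the composition formula for Beltrami coefficients and Weyl's lemma, are routine.
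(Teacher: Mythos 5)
Your argument is the standard proof of Stoilow factorization and is correct: solve the Beltrami equation for $\mu_f$ via the measurable Riemann mapping theorem to obtain the quasiconformal $h$, then $g=f\circ h^{-1}$ has vanishing complex dilatation almost everywhere by the composition formula and is therefore holomorphic by Weyl's lemma. The paper does not prove this statement at all --- it is quoted directly from \cite{IM} --- and the proof in that reference is essentially the one you outline, so your sketch matches the standard source rather than diverging from anything in the paper.
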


We call a quasiregular mapping $f$ \emph{uniformly quasiregular} if there exists $K\geq1$ such that
$K_{f^n}(z)\leq K$ for all $n\in\N$.
The dynamics of uniformly quasiregular mappings in the plane are well understood due to the following result of Hinkkanen.

\begin{theorem}[\cite{Hinkkanen}]
Every uniformly quasiregular map $f:\C\to\C$ is quasiconformally conjugate to a holomorphic map.
\end{theorem}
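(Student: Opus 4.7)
The plan is to construct a measurable Beltrami coefficient $\mu^*$ on $\C$ with $\|\mu^*\|_\infty<1$ that is $f$-invariant in the pullback sense, and then straighten it using the measurable Riemann mapping theorem. If $\phi$ is a quasiconformal homeomorphism of $\C$ with $\mu_\phi=\mu^*$, then $f$-invariance of $\mu^*$ will force the conjugate $g:=\phi\circ f\circ\phi^{-1}$ to have vanishing complex dilatation almost everywhere, and Stoïlow factorization (Theorem~\ref{Stoilow}) will then identify $g$ with a holomorphic map.

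To set up the pullback action, for any measurable $\nu:\C\to\D$ with $\|\nu\|_\infty<1$ define
\[ f^*\nu \;=\; \frac{\mu_f+(\nu\circ f)\,\overline{f_z}/f_z}{1+\overline{\mu_f}(\nu\circ f)\,\overline{f_z}/f_z}, \]
the characteristic feature being that if $\phi_\nu$ solves the Beltrami equation with coefficient $\nu$, then $\mu_{\phi_\nu\circ f}=f^*\nu$. The chain rule for complex dilatations yields the orbit identity $\mu_{f^{n+1}}=f^*\mu_{f^n}$ starting from $\mu_{f^0}=0$. Uniform quasiregularity provides a single $k<1$, namely $k=(K-1)/(K+1)$, with $\|\mu_{f^n}\|_\infty\leq k$ for every $n$, so the entire $f^*$-orbit of $0$ lies in the weak-$*$ compact convex ball $B_k:=\{\nu\in L^\infty(\C):\|\nu\|_\infty\leq k\}$.

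The heart of the proof is to extract an $f^*$-fixed point $\mu^*\in B_k$. Because $f^*$ is not linear in $\nu$, bare linear Cesàro averaging of $\{\mu_{f^n}\}$ need not produce an invariant element. I would instead average fiberwise on the Poincaré disk: at each $z$, replace the arithmetic mean of $\{\mu_{f^n}(z)\}_{n<N}$ by the hyperbolic barycenter, which remains inside $\{|\zeta|\leq k\}$, and extract a weak-$*$ subsequential limit $\mu^*\in B_k$ as $N\to\infty$. The orbit relation combined with the fact that $f^*$ acts on each fiber as a Möbius automorphism of $\D$, hence as a Poincaré isometry, makes the barycenters asymptotically invariant under $f^*$, and this invariance is inherited by $\mu^*$. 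Feeding $\mu^*$ into the measurable Riemann mapping theorem yields the straightening $\phi$; reading off $\mu_{\phi\circ f}=f^*\mu^*=\mu^*=\mu_\phi$ completes the conjugacy.

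The main obstacle is showing that $f^*$-invariance actually survives the limiting procedure. The composition operator $\nu\mapsto\nu\circ f$ is delicate in the weak-$*$ topology, and the non-linearity of $f^*$ in $\nu$ prevents the standard Banach--Alaoglu / Markov--Kakutani arguments from applying directly to a linear average. The fiberwise hyperbolic-barycenter construction---equivalently, viewing Beltrami coefficients as measurable sections of a disk bundle with Poincaré fibers on which $f^*$ acts fiberwise by isometries---is precisely what renders the limit genuinely invariant. Once $\mu^*$ is in hand, $\|\mu^*\|_\infty\leq k<1$ is automatic and the straightening via the measurable Riemann mapping theorem is routine.
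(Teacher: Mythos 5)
First, a point of order: the paper offers no proof of this statement --- it is quoted verbatim from Hinkkanen's paper --- so your attempt can only be measured against the argument underlying the cited result. Your overall strategy is the right one and is essentially that standard argument (Sullivan--Tukia, as adapted to semigroups by Hinkkanen and by Iwaniec--Martin): build an $f$-invariant Beltrami coefficient $\mu^*$ with $\|\mu^*\|_\infty\le k<1$, straighten it with the measurable Riemann mapping theorem, and identify $\phi\circ f\circ\phi^{-1}$ as holomorphic via Weyl's lemma and Sto\"{\i}low factorization. The set-up of the pullback $f^*$, the orbit identity $\mu_{f^{n+1}}=f^*\mu_{f^n}$, the uniform bound $k=(K-1)/(K+1)$, and the endgame are all correct.

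The gap is precisely at the step you yourself flag as the main obstacle, and the barycenter device does not close it. The fiberwise hyperbolic barycenters $b_N$ of the first $N$ orbit points are indeed asymptotically invariant: $f^*$ acts on each fiber by a Poincar\'e isometry, barycenters are isometry-equivariant and stable under replacing one of $N$ points in a set of bounded hyperbolic diameter, so $d_h\bigl(f^*b_N(z),b_N(z)\bigr)=O(1/N)$ uniformly in $z$. But this is of no use after a weak-$*$ subsequential limit: $f^*\nu(z)=A_z(\nu(f(z)))$ where $A_z$ is a genuinely nonlinear M\"{o}bius function of the value $\nu(f(z))$, and nonlinear superposition operators are not weak-$*$ continuous (compare $\nu_j(x)=k\,\mathrm{sign}(\sin jx)\rightharpoonup 0$ while $\nu_j^2\equiv k^2$). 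Composition with $f$ itself is weak-$*$ continuous --- its preadjoint on $L^1$ is the pushforward $\psi\mapsto\sum_{f(z)=w}\psi(z)/J_f(z)$ --- but the M\"{o}bius part is not, so from $b_{N_j}\rightharpoonup\mu^*$ and $f^*b_{N_j}-b_{N_j}\to 0$ you cannot conclude $f^*\mu^*=\mu^*$; asymptotic invariance of the approximants simply does not pass to the weak-$*$ limit. The standard repair avoids any limit in function space: work fiberwise with the whole orbit. The closed hyperbolic convex hulls $S_N(z)$ of the tails $\{\mu_{f^n}(z):n\ge N\}$ are nested nonempty compact convex subsets of $\{|\zeta|\le k\}$ satisfying $f^*_z(S_N(f(z)))=S_{N+1}(z)$, so $S_\infty(z)=\bigcap_N S_N(z)$ is nonempty and \emph{exactly} invariant; its hyperbolic circumcenter is unique by negative curvature, equivariant under isometries, measurable in $z$, and lies in $\{|\zeta|\le k\}$. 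That circumcenter is the $\mu^*$ you want, and with it the rest of your proof goes through.
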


This means that the dynamics of uniformly quasiregular maps of the plane reduce to complex dynamics, which gives us no new features.
To ensure our study is of independent interest, we require that mappings of the form $h(z)^2+c$ are not uniformly quasiregular.

\subsection{Definition of $H$}

Recall that if $K > 1, \theta \in (-\pi/2, \pi/2]$, then $h_{K,\theta}$ is the affine mapping given by
\[ h_{K,\theta} = \left ( \frac{K+1}{2} \right ) z + e^{2i\theta} \left ( \frac{K-1}{2} \right ) \overline{z}.\]
Using the formula for complex dilatation, we see that
\begin{equation}
\label{eq3.3}
\mu _{h_{K,\theta}} = e^{2i\theta} \frac{K-1}{K+1},
\end{equation}
and so $\av \av \mu_{h_{K,\theta}} \av \av _{\infty} <1$ which verifies that $h_{K,\theta}$ is quasiconformal with constant complex dilatation.
We now prove Proposition \ref{canonicalform} that states every quasiregular mapping in the plane of degree two with constant complex dilatation is linearly conjugate to a unique mapping of the form $h_{K,\theta}(z)^2+c$ for some $K > 1, \theta \in (-\pi/2, \pi/2]$ and $c \in \C$.

\begin{proof}[Proof of Proposition \ref{canonicalform}]
Let $f$ satisfy the hypotheses of the proposition and let $\mu_f \equiv \mu$. By Theorem \ref{Stoilow}, we can write $f=\widetilde{g} \circ \widetilde{h}$ for some quadratic polynomial $\widetilde{g}$ and quasiconformal map $\widetilde{h}$ with constant complex dilatation. We may assume that $\widetilde{h}$ fixes $0$.

Let $h=h_{K,\theta}$, where $K,\theta$ are chosen such that $\left ( \frac{K-1}{K+1} \right ) e^{2i\theta} = \mu$. Then by the formula for the complex dilatation of a composition (see, for example, \cite{FM}), we have
\[ \mu_{ \widetilde{h} \circ h_{K,\theta}^{-1}} \equiv 0.\]
Therefore, there exists a conformal map $A:\C \to \C$ such that $\widetilde{h} = A \circ h_{K, \theta}$.

We can therefore write $f=g \circ h$, where $g = \widetilde{g} \circ A$ is a quadratic polynomial and $h=h_{K,\theta}$.
Finally, applying \cite[Proposition 3.1]{FG} gives the result.
\end{proof}

In this paper we focus on the case where $c=0$ and we suppress the subscripts $K$ and $\theta$ where there will be no confusion.
We can restrict ourselves to studying only the $c=0$ case because of the following theorem from \cite{FF}, which gives an analogue of B\"{o}ttcher coordinates for these mappings.

\begin{theorem}{\cite[Theorem 2.1]{FF}}
\label{thmBot}
Let $K>1, \theta \in (-\pi/2,\pi/2]$, $h=h_{K,\theta}$ be an affine mapping and $c \in \C$. Then there exists a neighbourhood $U=U(K,\theta,c)$ of infinity and a quasiconformal map
$\psi = \psi(K,\theta,c)$ such that
\[H(\psi(z)) = \psi(f(z)),\]
for $z \in U$, where $f(z) = h(z)^2+c$.
\end{theorem}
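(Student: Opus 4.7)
The plan is to construct $\psi$ as the limit of the Böttcher-type iterates $\psi_n := H^{-n}\circ f^n$ on a neighbourhood of infinity, taking the branch of $H^{-n}$ normalised so that $\psi_n(z)/z\to 1$ as $|z|\to\infty$. First I would fix $R>0$ large enough that $U := \{|z|>R\}$ is forward invariant under both $f$ and $H$; this is possible since $|h(z)|\asymp|z|$ yields $|H(z)|\asymp|z|^2$, while $|f(z)-H(z)|=|c|$ is bounded. Shrinking $U$, the map $H:U\to H(U)$ is a proper $2$-to-$1$ covering, and of the two branches of $H^{-1}$ on $f(U)$ I select the one $\sigma$ satisfying $\sigma(f(z))/z\to 1$ at infinity (well-defined because $f(z)/H(z)\to 1$). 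Iterating this choice consistently defines $H^{-n}$ on $f^n(U)$.

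Convergence of $(\psi_n)$ follows from a classical Böttcher-style estimate. In logarithmic coordinates $H$ acts near infinity as doubling plus a bounded quasisymmetric twist from $h$, so that, up to a lower-order correction,
\[
\log\psi_{n+1}(z)-\log\psi_n(z) \approx \frac{1}{2^{n+1}}\log\!\left(1+\frac{c}{H(f^n(z))}\right) = O\bigl(2^{-n-1}|f^n(z)|^{-2}\bigr).
\]
Since $|f^n(z)|$ grows doubly exponentially, this is summable, yielding a continuous limit $\psi$ on $U$ with $\psi(z)/z\to 1$, uniform on compact subsets.

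The main obstacle is upgrading $\psi$ from continuous to quasiconformal, for which one needs a uniform bound on the maximal dilatation of $\psi_n$ independent of $n$. Here I would use the Stoilow factorisations $f=g\circ h$ and $H=g_0\circ h$, with $g(w)=w^2+c$ and $g_0(w)=w^2$ both holomorphic, so that $f$ and $H$ share the same quasiconformal part $h$. Writing $\psi_n=(h^{-1}\circ g_0^{-1})^n\circ(g\circ h)^n$ and bookkeeping Beltrami coefficients through the chain rule, the distortion contributed by each inner $h$ in $f^n$ is compensated by an $h^{-1}$ from $H^{-n}$ up to a holomorphic correction, leaving the dilatation of $\psi_n$ bounded by a constant $K'$ depending only on $K$. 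Normal-family compactness then gives that the limit $\psi$ is $K'$-quasiconformal, and the normalisation $\psi_n(z)/z\to 1$ prevents degeneration into a non-homeomorphism. Passing to the limit in $\psi_n=\sigma\circ\psi_{n-1}\circ f$ yields $H\circ\psi=\psi\circ f$ on $U$.

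The hard part is securing the uniform dilatation bound on $\psi_n$ in the presence of $n$ alternating quasiconformal factors; the branch selection and convergence estimate are direct analogues of the classical holomorphic Böttcher argument.
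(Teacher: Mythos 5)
The paper you are looking at does not prove this statement at all: Theorem \ref{thmBot} is imported verbatim from \cite[Theorem 2.1]{FF}, so there is no in-paper argument to compare against. Your overall architecture --- the pullback iterates $\psi_n = H^{-n}\circ f^n$ with the branch of $H^{-n}$ normalised at infinity, convergence from the doubly exponential growth of $|f^n(z)|$, and a uniform dilatation bound followed by normal-family compactness --- is the natural quasiregular analogue of the classical B\"{o}ttcher argument and is essentially the strategy of \cite{FF}. The branch selection and the convergence of $\psi_n$ are fine in spirit (though your displayed logarithmic estimate silently treats $H^{-n}$ as if it acted on $\log z$ by division by $2^{n}$; because of the interleaved factors $h^{-1}$ the contraction factor is really $(C_K/2)^n$ for some bi-Lipschitz constant $C_K$ of $h^{-1}$ in logarithmic coordinates, and one needs the doubly exponential decay of $|f^n(z)|^{-2}$ to absorb this).

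The genuine gap is the uniform bound on $K_{\psi_n}$, which you correctly identify as the hard part but then dispose of with a mechanism that does not work. Writing $f=g\circ h$ and $H=g_0\circ h$ and expanding $\psi_n=(h^{-1}\circ g_0^{-1})^n\circ(g\circ h)^n$, only the innermost junction cancels: $g_0^{-1}\circ g=\phi$ is holomorphic and tangent to the identity, but the next pair $g_0^{-1},g$ is now separated by $h^{-1}\circ\phi\circ h$, which is \emph{not} holomorphic, so the alleged pairwise compensation of each $h$ by an $h^{-1}$ breaks down immediately; there is no telescoping. Indeed Theorem \ref{s2t3} of this very paper shows that $K_{f^n}$ and $K_{H^n}$ both tend to infinity on every open set, so the boundedness of $K_{\psi_n}$ cannot come from bounding the two halves separately or from any term-by-term cancellation --- it is a delicate cancellation that must be tracked through the Beltrami composition formula. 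The workable route is to write $\psi_{n+1}=(H^{-n}\circ\chi\circ H^{n})\circ\psi_n$ with $\chi=h^{-1}\circ g_0^{-1}\circ g\circ h$ satisfying $|\mu_\chi(z)|=O(|z|^{-2})$, and to show, using the fact that the composition formula acts on Beltrami coefficients by hyperbolic isometries of $\D$, that $d_h\bigl(\mu_{\psi_{n+1}}(z),\mu_{\psi_n}(z)\bigr)$ is summable in $n$; here the exponential growth of $K_{H^n}$ (which amplifies the effect of the near-identity rotation factors) must be beaten by the doubly exponential decay of $|\mu_\chi|$ along the orbit. Without such a quantitative estimate, the assertion that $K_{\psi_n}\leq K'$ uniformly is unsupported, and the rest of the proof (compactness, non-degeneration, passage to the limit in the functional equation) has nothing to stand on.
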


From Proposition \ref{canonicalform} we know that any degree two mapping of constant complex dilatation is linearly conjugate to a mapping $f_{K,\theta,c}$ for some $K,\theta,c$. Then Theorem~\ref{thmBot} tells us that $f_{K,\theta,c}$ is quasiconformally conjugate to $h_{K,\theta}^2$ in a neighbourhood of infinity. Therefore we may restrict our attention to the study of dynamics of the mappings $h_{K,\theta}^2$.
We also note that for a fixed $K$ the maps $h_{K,\theta}^2$ and $h_{K,-\theta}^2$ are related.

\begin{lemma}\label{Upsilon}
We have $H_{K,-\theta}(z)=\overline{H_{K,\theta}(\overline{z})}$.
\end{lemma}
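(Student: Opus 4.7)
The plan is to reduce the identity to the corresponding statement for the affine map $h$ itself, and then square. Specifically, I will first verify that
\[
h_{K,-\theta}(z) \;=\; \overline{h_{K,\theta}(\overline{z})},
\]
which is a one-line check: substituting $\overline{z}$ into the definition of $h_{K,\theta}$ gives $\left(\tfrac{K+1}{2}\right)\overline{z} + e^{2i\theta}\left(\tfrac{K-1}{2}\right)z$, and conjugating swaps the two terms and replaces $e^{2i\theta}$ with $e^{-2i\theta}$, producing exactly the formula for $h_{K,-\theta}(z)$.

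From here the full statement follows by applying the elementary identity $\overline{w^2}=\overline{w}^{\,2}$ with $w=h_{K,\theta}(\overline{z})$:
\[
\overline{H_{K,\theta}(\overline{z})} \;=\; \overline{h_{K,\theta}(\overline{z})^{2}} \;=\; \bigl(\overline{h_{K,\theta}(\overline{z})}\bigr)^{2} \;=\; h_{K,-\theta}(z)^{2} \;=\; H_{K,-\theta}(z).
\]

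There is no real obstacle; the only thing to watch is keeping track of the sign on $2i\theta$ when conjugating, since this is precisely what produces the $-\theta$ on the right-hand side. The content of the lemma is simply that the reflection $z\mapsto\overline{z}$ conjugates $H_{K,\theta}$ to $H_{K,-\theta}$, which is intuitively clear from the fact that reflecting the direction $e^{i\theta}$ of the stretch across the real axis produces the direction $e^{-i\theta}$.
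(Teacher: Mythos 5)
Your proof is correct and follows essentially the same route as the paper: a direct computation showing that conjugation of the argument and the value turns $e^{2i\theta}$ into $e^{-2i\theta}$ in the affine map, followed by the observation that conjugation commutes with squaring. If anything, you are slightly more careful than the paper, whose displayed computation writes $H$ where the affine formula really only applies to $h$ and leaves the squaring step implicit.
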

\begin{proof}
\begin{align*}
\overline{H_{K,\theta}(\overline{z})}
&=\left ( \frac{K+1}{2} \right ) z + e^{-2i \theta}\left ( \frac{K-1}{2} \right )\zbar\\
&=H_{K,-\theta}(z).
\end{align*}
\end{proof}

Lemma~\ref{Upsilon} means that we can just study the range $\theta\in[0,\pi/2]$ then transfer the results using complex conjugation to extend to $\theta\in(-\pi/2,0)$.

Fix $K>1, \theta \in (-\pi/2, \pi/2]$ and $H=h_{K,\theta}^2$. Contained in the proof of \cite[Theorem 6.4]{FG} is the observation that $H$ takes the form
\begin{equation}
\label{hpolar}
H(re^{i\varphi}) = r^2(1+(K^2-1)\cos^2(\varphi-\theta)) \exp \left [ 2i \left( \theta + \tan^{-1} \left ( \frac{\tan(\varphi-\theta)}{K} \right ) \right ) \right ]
\end{equation}
in polar coordinates. Hence $H$ maps rays to rays. First observed in \cite{FG} is the fact that $H$ has at least one fixed ray and that fixed rays correspond to roots of the cubic polynomial
\begin{equation}\label{cubic}
P(t):=Kt^3 + (2-K)\tan(\theta/2)t^2 + (2-K)t + K\tan(\theta/2),
\end{equation}
where $t=\tan[(\varphi-\theta)/2]$. It is shown that $P$ always has a root $t_0\in(-1,1)$ which corresponds to a fixed ray with angle in $(-\pi/2, \pi/2)$, so there is always at least one fixed ray. The fact that $P$ is a cubic suggests there could be one, two or three fixed rays and we will show that all cases are possible.

In \cite[Proposition 7.3]{FF} the following proposition is proved.

\begin{proposition}\label{s6l1}
Let $R_\phi$ be a fixed ray of $H$. Then the complex dilatation $\mu_{H^n}(z)$ of $H^n$ for $z\in R_\phi$ does not depend on $z$ and satisfies
$\mu_{H^n}=A^{n-1}(\mu_H)$ where $A$ is the M\"{o}bius map
\begin{equation}\label{s6eA}
A(w) = \frac{ \mu_H + e^{-i\phi}w} {1+e^{-i\phi}\overline{\mu_H} w}.
\end{equation}
In particular,
\[|\mu_{H^n}(z)|\to 1 \mbox{ as } n\to\infty.\]
\end{proposition}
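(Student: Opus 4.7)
The plan is to compute $\mu_{H^n}(z)$ along $R_\phi$ by induction on $n$ using the chain rule for Beltrami coefficients. The cleaner setup comes from writing $H^n = H^{n-1}\circ H$ (not $H\circ H^{n-1}$): the composition formula for this decomposition is
\[
\mu_{H^n}(z) = \frac{\mu_H(z) + \mu_{H^{n-1}}(H(z))\,\tau_H(z)}{1 + \overline{\mu_H(z)}\,\mu_{H^{n-1}}(H(z))\,\tau_H(z)},
\]
where $\tau_H(z) := \overline{H_z(z)}/H_z(z)$ is the only non-constant ingredient, since $\mu_H$ is globally constant. The whole argument therefore reduces to evaluating $\tau_H$ at points of $R_\phi$ and then iterating.

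The key computation is that $\tau_H(z) = e^{-i\phi}$ for every $z\in R_\phi$. I would obtain this geometrically: because $H$ maps $R_\phi$ to itself with positive Jacobian along the ray (visible from the polar form \eqref{hpolar}), the tangent vector $e^{i\phi}$ at $z$ is sent by $DH(z)$ to a positive real multiple of $e^{i\phi}$ at $H(z)$, so $H_z(z)\bigl(1+\mu_H e^{-2i\phi}\bigr)\in\R_{>0}$. Using the explicit form of $h_{K,\theta}$, one checks $1+\mu_H e^{-2i\phi} = \tfrac{2\gamma}{K+1}$ with $\gamma = \tfrac{K+1}{2} + e^{2i(\theta-\phi)}\tfrac{K-1}{2}$, and the fixed-ray cubic \eqref{cubic} is exactly the statement $\arg\gamma \equiv -\phi/2 \pmod{\pi}$. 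Hence $\gamma = \pm|\gamma|e^{-i\phi/2}$, which gives $\tau_H(z) = \gamma/\bar\gamma = e^{-i\phi}$.

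With $\tau_H(z) = e^{-i\phi}$ in hand, the induction runs cleanly. Since $H(z)\in R_\phi$, the inductive hypothesis applied at $H(z)$ gives $\mu_{H^{n-1}}(H(z)) = A^{n-2}(\mu_H)$, a constant independent of $z$; substituting into the composition formula yields
\[
\mu_{H^n}(z) = \frac{\mu_H + A^{n-2}(\mu_H)\,e^{-i\phi}}{1 + \overline{\mu_H}\,A^{n-2}(\mu_H)\,e^{-i\phi}} = A\bigl(A^{n-2}(\mu_H)\bigr) = A^{n-1}(\mu_H),
\]
independent of $z\in R_\phi$. The base case $n=1$ is trivial.

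For $|\mu_{H^n}(z)|\to 1$, a direct calculation gives $|A(w)|^2-1 = (1-|\mu_H|^2)(|w|^2-1)/|1+e^{-i\phi}\overline{\mu_H}w|^2$, so $A$ is a M\"obius automorphism of $\D$, with normalized trace squared $\tr(A)^2 = 4\cos^2(\phi/2)/(1-|\mu_H|^2)$. I expect the main obstacle to be ruling out the elliptic case $\tr(A)^2<4$, in which orbits would remain on a hyperbolic circle about an interior fixed point of $A$ and could not approach $\partial\D$. One must show that the fixed-ray cubic forces $\tr(A)^2\geq 4$, with equality presumably at the neutral ray of Theorem \ref{s2t1}; granted this, the standard fact that every orbit of a hyperbolic or parabolic disk automorphism converges to a boundary fixed point delivers $|A^{n-1}(\mu_H)|\to 1$.
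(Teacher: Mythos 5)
Your derivation of the recursion is correct, and it is essentially the mechanism this paper itself uses: the proposition is quoted here from \cite{FF} rather than proved, but the identical computation appears inside the proof of Lemma \ref{s6l3}, where the decomposition $H^n=H^{n-1}\circ H$ and the composition formula reduce everything to $r_H(z)=\overline{H_z(z)}/H_z(z)$, and the value $r_H=e^{-i\phi}$ on $R_\phi$ is obtained from $H_z(z)=(K+1)h(z)$ together with $\arg h(re^{i\phi})=\phi/2 \pmod{\pi}$. Your tangent-vector argument via $\gamma$ is an equivalent route to the same value, so the assertions that $\mu_{H^n}$ is constant on $R_\phi$ and equals $A^{n-1}(\mu_H)$ are fully established.

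The genuine gap is in the final claim $|\mu_{H^n}(z)|\to 1$: you correctly reduce it to showing that $A$ is not elliptic, but you do not show this, and your guess about where $\tr(A)^2=4$ occurs is wrong. The missing step does follow from the fixed-ray condition: by \eqref{polar} a fixed ray satisfies $\phi/2=(\phi-\theta)-\tan^{-1}\bigl(\tan(\phi-\theta)/K\bigr)$, and substituting this into your formula $\tr(A)^2=\tfrac{(K+1)^2}{K}\cos^2(\phi/2)$ (which agrees with \eqref{tracesquared}) and writing $t=\tan^2(\phi-\theta)$ gives
\[
\tr(A)^2-4=\frac{(K-1)^2\,(K-t)^2}{K\,(1+t)\,(K^2+t)}\;\geq\;0,
\]
so $A$ is hyperbolic or parabolic and the Denjoy--Wolff theorem yields $A^n(w)\to\alpha\in\partial\D$ for all $w\in\D$, completing the proof. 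Note that equality holds exactly when $\tan^2(\phi-\theta)=K$, which has nothing to do with the neutral ray of Theorem \ref{s2t1}: for instance $K=3$, $\theta=0$, $\phi=\pi/3$ is a \emph{repelling} fixed ray with $\tr(A)^2=4$ (so $A$ is parabolic there), whereas at the neutral configuration $K=2$, $\theta=0$, $\phi=0$ one has $\tr(A)^2=9/2>4$. The type of the fixed ray is governed by $\widetilde{H}'(\phi)$ in \eqref{diff} and is independent of the type of $A$; conflating the two is the conceptual slip behind your ``presumably,'' and had the equality case landed where you guessed, your argument would still have needed the parabolic (not just hyperbolic) case of Denjoy--Wolff anyway.
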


\subsection{M\"{o}bius maps and Blaschke products}

To prove Theorem~\ref{s2t4} we will need some results on hyperbolic M\"{o}bius maps of $\D$, which we will state now and prove in $\S9$.
Recall that a M\"{o}bius map $A:\D\to\D$ is called \emph{hyperbolic} if $\tr^2(A)>4$, where $\tr$ denotes the trace of the normalized matrix representing $A$. The connection here is that in \cite{FF} it was proved that the M\"{o}bius map given in \eqref{s6eA} is always hyperbolic.

\begin{lemma}\label{sMl1}
Let $A:\D\to\D$ be a hyperbolic M\"{o}bius map. Then $A$ is conjugate to a M\"{o}bius map $\widehat{A}:\Hp \to \Hp$ given by
\[\widehat{A}(z)=kz\]
where
\[k=(T-2-(T^2-4T)^{\frac{1}{2}})/2 < 1,\]
and $T:=\tr^2(A).$
\end{lemma}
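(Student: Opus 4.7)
The plan is to use the standard classification of Möbius automorphisms of $\D$ by their fixed point structure. Since $A$ is hyperbolic, its trace satisfies $\tr^2(A) = T > 4$, so the characteristic polynomial $\lambda^2 - \tr(A)\lambda + 1 = 0$ of the normalized matrix has two distinct real eigenvalues, and correspondingly $A$ has two distinct fixed points $z_1, z_2$ on $\partial \D$ (and none in $\D$).

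First I would pick any Möbius transformation $\varphi$ that maps $\D$ conformally onto $\Hp$ and sends $z_1 \mapsto 0$ and $z_2 \mapsto \infty$ (for instance, a composition of a disk automorphism moving $z_1$ to $-1$ and $z_2$ to $1$ with the Cayley transform). Setting $\widehat{A} := \varphi \circ A \circ \varphi^{-1}$, the conjugate map is a Möbius automorphism of $\Hp$ fixing $0$ and $\infty$, hence of the form $\widehat{A}(z) = kz$ for some $k > 0$.

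Next I would compute $k$. Since trace (and therefore $T = \tr^2$) is invariant under conjugation by a matrix of determinant $1$, it suffices to read off $k$ from the normalized matrix $\bigl(\begin{smallmatrix}\sqrt{k} & 0 \\ 0 & 1/\sqrt{k}\end{smallmatrix}\bigr)$ of $\widehat{A}$, which has trace $\sqrt{k} + 1/\sqrt{k}$. Thus $k$ satisfies
\[
k + \frac{1}{k} + 2 = T,
\]
i.e.\ $k^2 - (T-2)k + 1 = 0$, whose two roots are
\[
k_{\pm} = \frac{(T-2) \pm \sqrt{(T-2)^2 - 4}}{2} = \frac{(T-2) \pm \sqrt{T^2 - 4T}}{2}.
\]
These two roots are reciprocals of each other and correspond to the two possible choices of which boundary fixed point is sent to $0$ versus $\infty$. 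A short check using $T > 4$ shows $k_- \in (0,1)$ (one has $\sqrt{T^2-4T} < T - 2$ since $T^2 - 4T < (T-2)^2$, and $T - 2 - \sqrt{T^2-4T} < 2$ since $T - 4 < \sqrt{T^2-4T}$ squares to $T^2 - 8T + 16 < T^2 - 4T$). Therefore, by swapping the roles of $z_1$ and $z_2$ in the choice of $\varphi$ if necessary, we may arrange $k = k_-$, which is the desired formula.

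The only subtle point is the choice of sign: the hyperbolic Möbius map has an attracting and a repelling fixed point on $\partial \D$, and sending the attracting one to $0$ (in $\Hp$) forces $k < 1$; this is exactly the freedom that selects the minus sign in the quadratic formula. There is no real obstacle beyond bookkeeping; the calculation of $k$ from the trace is a one-line application of the invariance of $\tr^2$ under conjugation.
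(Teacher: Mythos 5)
Your proposal is correct and follows essentially the same route as the paper: conjugate to an automorphism of $\Hp$ fixing $0$ and $\infty$, hence of the form $kz$, then use invariance of the trace to get $k+1/k+2=T$ and select the root less than $1$. You supply somewhat more detail than the paper (the explicit verification that $k_-\in(0,1)$ and the remark that the sign choice corresponds to which boundary fixed point is sent to $0$), but there is no substantive difference in method.
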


The following lemma on sequences of hyperbolic M\"{o}bius transformations is a combination of results from \cite{Gill} and \cite{MM} .

\begin{theorem}\label{sMlMM}\cite{Gill,MM}
Let $A,A_j$ be hyperbolic M\"{o}bius maps of $\D$ such that $A^n(z)\to\alpha\in\partial\D$ as $n\to\infty$ and $A_j \to A$ locally uniformly as $j\to\infty$.
Suppose we have sequences $t_n, s_n$ of hyperbolic M\"{o}bius maps of $\D$ defined by
\begin{align*}
t_n(z)&=A_1\circ A_2\circ\ldots\circ A_n(z),\\
s_n(z)&=A_n\circ A_{n-1}\circ\ldots\circ A_1(z).
\end{align*}
Then both $t_n(z) \to \alpha$ and $s_n(z)\to \alpha$ as $n\to \infty$ for all $z\in\D$.
\end{theorem}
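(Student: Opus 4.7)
We first reduce to a normal form via Lemma~\ref{sMl1} and then invoke the two cited results separately for the two compositions. By Lemma~\ref{sMl1} there is a M\"obius conjugation $M:\D\to\Hp$ with $\widehat{A}:=MAM^{-1}$ equal to the dilation $\widehat{A}(z)=kz$ on the upper half-plane, where $k\in(0,1)$, so that $M(\alpha)=0$ and $\infty$ is the repelling fixed point. The hypotheses transfer: the conjugates $\widehat{A}_j:=MA_jM^{-1}$ converge to $\widehat{A}$ locally uniformly on $\Hp$, and for $j$ large each $\widehat{A}_j$ is hyperbolic with attracting fixed point $\alpha_j\to 0$, repelling fixed point $\beta_j\to\infty$, and multiplier $k_j\to k$. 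It therefore suffices to show that $\widehat{s}_n(z),\widehat{t}_n(z)\to 0$ in the Euclidean metric for every $z\in\Hp$, where $\widehat{s}_n,\widehat{t}_n$ denote the corresponding compositions of the $\widehat{A}_j$.

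Fix $k'\in(k,1)$ and take a closed upper half-disk $K:=\{w\in\overline{\Hp}:|w|\leq R\}$, with $R$ chosen sufficiently large that $z$ and the initial iterates $\widehat{s}_1(z),\ldots,\widehat{s}_{J_0-1}(z)$ all lie in $K$; here $J_0$ is chosen using locally uniform convergence of $\widehat{A}_j$ (and its derivative, by Weierstrass) so that for all $j\geq J_0$, $\widehat{A}_j(K)\subset K$ and $\widehat{A}_j$ is Euclidean $k'$-Lipschitz on $K$. Such a choice is possible because $\widehat{A}(K)=kK\Subset K$ with margin $(1-k)R$, which absorbs the perturbation once $j\geq J_0$. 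For the forward composition (the result of \cite{MM}), the orbit $\widehat{s}_n(z)$ is trapped in $K$ from index $J_0-1$ onwards and each subsequent $\widehat{A}_j$ contracts the Euclidean distance to $0$ by a factor at most $k'$ up to an error $|\alpha_j|\to 0$, yielding $\widehat{s}_n(z)\to 0$.

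For the backward composition (the result of \cite{Gill}), consider the nested family $E_n:=\widehat{A}_1\circ\widehat{A}_2\circ\cdots\circ\widehat{A}_n(K)$. For $n\geq J_0$, $\widehat{A}_{n+1}(K)\subset K$ so $E_{n+1}\subset E_n$, and since at least $n-J_0+1$ of the maps in the composition are $k'$-Lipschitz on $K$, one obtains
\[\operatorname{diam}(E_n)\leq L(k')^{n-J_0+1}\operatorname{diam}(K),\]
where $L$ bounds the Lipschitz constant of the fixed outer block $\widehat{A}_1\circ\cdots\circ\widehat{A}_{J_0-1}$ on $K$. Hence $\bigcap_n E_n=\{p\}$ is a single point and $\widehat{t}_n(z)\in E_n$ forces $\widehat{t}_n(z)\to p$. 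The main technical obstacle is the identification $p=0$: one applies the forward-composition result just established to the shifted sequence $\widehat{A}_{J_0},\widehat{A}_{J_0+1},\ldots$ to see that the inner block $\widehat{A}_{J_0}\circ\cdots\circ\widehat{A}_n(w)\to 0$ for any $w\in K$, and then uses continuity of the outer block together with $\widehat{A}_j(0)\to 0$ (since $\alpha_j\to 0$), in the limit $J_0\to\infty$, to pin down $p=0$.
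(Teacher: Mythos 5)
First, a point of comparison: the paper does not actually prove this statement --- it is imported from the cited works of Gill and Mandell--Magnus, and the appendix only proves Lemma~\ref{sMl1} and Theorem~\ref{sMl3} --- so your sketch is being measured against the literature rather than an internal argument. Your overall strategy (normalise $A$ to $w\mapsto kw$ on $\Hp$ via Lemma~\ref{sMl1} and treat the $A_j$ as perturbations of a strict contraction near the attracting fixed point) is the right one in spirit, but there are two genuine gaps. For the outer composition $s_n$, your choice of the half-disk $K=K_R$ and the threshold $J_0$ is circular: $J_0$ is determined by $R$ (the conditions $\widehat{A}_j(K_R)\subset K_R$ and the $k'$-Lipschitz bound on $K_R$ require the repelling fixed points to satisfy $|\beta_j|\gg R$, so the threshold grows with $R$), while $R$ is chosen so that $\widehat{s}_{J_0-1}(z)\in K_R$. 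Enlarging $R$ enlarges $J_0$, the bootstrap never closes, and nothing in the argument excludes the one failure mode that gives the theorem its content, namely that the orbit drifts towards the repelling fixed point $\infty$ before it ever enters a region on which the tail maps contract. You need a separate a priori argument that the orbit eventually enters a fixed trapping neighbourhood of $0$ --- e.g.\ by tracking $|\phi_n(\widehat{s}_n(z))|$ where $\phi_n(w)=(w-\alpha_n)/(w-\beta_n)$ linearises $\widehat{A}_n$ --- and this is precisely what the cited proof supplies.

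The second gap is more serious. Your nested-set argument does show $t_n(z)\to p$ for a single constant $p$, but the identification $p=\alpha$ does not follow from what you wrote: the ``inner block'' $\widehat{A}_{J_0}\circ\cdots\circ\widehat{A}_n$ is itself an inner composition, so invoking the forward-composition result for it is applying the wrong theorem, and letting $J_0\to\infty$ only yields $p=\lim_n t_n(0)$, a tautology. In fact $p=\alpha$ is false in general, so no argument can close this gap: take $A_j=A$ for $j\ge 2$ and $A_1$ any hyperbolic automorphism of $\D$ with $A_1(\alpha)\ne\alpha$; all hypotheses hold, yet $t_n(z)=A_1(A^{n-1}(z))\to A_1(\alpha)\ne\alpha$. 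What is true (and is what Mandell--Magnus prove, and all that the paper actually uses in Lemma~\ref{s6l3} and Theorem~\ref{sMl3}) is that $t_n$ converges to some constant $p\in\partial\D$, so that $|t_n(z)|\to 1$; the assertion $t_n(z)\to\alpha$ in Theorem~\ref{sMlMM} is itself an overstatement, and your proof attempt should only claim, and only needs to establish, convergence of $t_n$ to a boundary constant.
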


We will use this result to prove the following theorem, the proof of which is postponed until $\S8$.

\begin{theorem}\label{sMl3}
Let $A,A_j:\D\to\D$ be hyperbolic M\"{o}bius maps such that $A^n(z)\to\alpha\in\partial\D$ as $n\to\infty$ and $A_j\to A$ locally uniformly as $j\to\infty$. Let
\[t_n(z)=A_1\circ A_2\circ\ldots\circ A_n(z).\]
Then
\[d_h(0,t_n(z))=\log\left[\frac{1}{\prod_{j=1}^n k_j}\right] + O(1),\]
for large $n$, where $d_h$ denotes the hyperbolic metric on $\D$, $k_j<1$ for all $j$ and $k_j\to k$, where $k_j,k$ are the quantities defined in Lemma~\ref{sMl1} associated to $A_j,A$.
\end{theorem}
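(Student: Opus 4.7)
The strategy is to transfer the problem to the upper half-plane $\Hp$ using Lemma~\ref{sMl1}, where each $A_j$ becomes the explicit dilation $L_{k_j}(w) = k_j w$, thereby reducing the distance estimate to a concrete calculation. For each $j$ there is a M\"{o}bius map $\phi_j : \D \to \Hp$ sending the attracting fixed point of $A_j$ to $0$ and the repelling one to $\infty$, with $\phi_j \circ A_j \circ \phi_j^{-1} = L_{k_j}$. Normalising, say $\phi_j(0) = i$, and using that $A_j \to A$ locally uniformly, the fixed points of $A_j$ converge to those of $A$, and hence $\phi_j \to \phi$ locally uniformly on $\overline{\D}$, where $\phi$ is the analogous conjugacy for $A$. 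Setting $\sigma_j := \phi_j \circ \phi_{j+1}^{-1}$, so that $\sigma_j \to \mathrm{id}$ locally uniformly on $\Hp$, and telescoping the conjugacies through the composition $t_n$ yields the factorisation
\[\phi_1 \circ t_n = L_{k_1} \circ \sigma_1 \circ L_{k_2} \circ \sigma_2 \circ \cdots \circ \sigma_{n-1} \circ L_{k_n} \circ \phi_n.\]

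Since M\"{o}bius maps are isometries of the hyperbolic metric, $d_h(0, t_n(z)) = d_\Hp(\phi_1(0), \phi_1 \circ t_n(z))$. Consider first the idealised situation in which every $\sigma_j$ is the identity: the factorisation collapses to $L_{K_n} \circ \phi_n$ with $K_n := \prod_{j=1}^n k_j$. Because $\phi_n(z)$ remains in a bounded region of $\Hp$ (as $\phi_n \to \phi$), the point $K_n \phi_n(z)$ has imaginary part comparable to $K_n$, and the hyperbolic distance in $\Hp$ from the fixed base point $\phi_1(0)$ is $\log(1/K_n) + O(1) = \sum_{j=1}^n \log(1/k_j) + O(1)$, which is exactly the claimed bound.

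The main difficulty is to show that the perturbations $\sigma_j$ do not disrupt this estimate. Although $\sigma_j \to \mathrm{id}$ locally uniformly, the sequence of points at which they are evaluated descends towards $0 \in \partial\Hp$ at geometric rate, where a near-identity M\"{o}bius map can produce non-negligible hyperbolic displacement. My plan is to pass to the matrix level, writing $\phi_1 \circ t_n \circ \phi_n^{-1}$ as a product $M_n = L_{k_1} H_1 L_{k_2} H_2 \cdots H_{n-1} L_{k_n}$ in $SL(2,\R)$ with $H_j \to I$, and to estimate $\tr(M_n)$ by tracking the action on the attracting eigendirection (the second basis vector, stretched by $L_{k_j}$ by the factor $1/\sqrt{k_j}$). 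Theorem~\ref{sMlMM} enters crucially at this step: the convergence $t_n(z) \to \alpha$ it asserts forces the axis of $t_n$ to remain at bounded hyperbolic distance from the axis of $A$, so that the dominant scaling direction of $M_n$ stays aligned with this vector. This gives $|\tr(M_n)| \asymp 1/\sqrt{K_n}$, hence a translation length of $\log(1/K_n) + O(1)$ for $t_n$; and because $0 \in \D$ sits at uniformly bounded hyperbolic distance from the axis of $A$ (and therefore from the axes of the $A_j$ and of $t_n$), the displacement $d_h(0, t_n(z))$ agrees with this translation length up to an additive $O(1)$, giving the estimate $d_h(0, t_n(z)) = \sum_{j=1}^n \log(1/k_j) + O(1)$.
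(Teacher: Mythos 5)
Your setup (conjugating each $A_j$ to the dilation $L_{k_j}$ on $\Hp$ and telescoping the conjugacies through $t_n$) is coherent, and your idealised computation is correct, but the third paragraph --- which is where the entire content of the theorem lies --- is a plan rather than a proof, and as described it cannot deliver the additive $O(1)$. The matrices $H_j$ satisfy $H_j\to I$, but the hypotheses give no rate of convergence, so $\sum_j\log\|H_j\|$ need not converge; tracking the expanding eigendirection through $M_n=L_{k_1}H_1L_{k_2}\cdots H_{n-1}L_{k_n}$ accumulates a multiplicative error of the form $\prod_j\bigl(1+O(\|H_j-I\|)\bigr)$, which is $e^{o(n)}$ but not $O(1)$ (take $\|H_j-I\|\asymp 1/j$). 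So this route yields $\log\|M_n\|=\sum_j\log(1/\sqrt{k_j})+o(n)$ at best, strictly weaker than the claim. There are also two secondary problems: $M_n=\phi_1\circ t_n\circ\phi_n^{-1}$ is not a conjugate of $t_n$, so $\tr(M_n)$ does not directly give the translation length of $t_n$; and the normalisation ``$\phi_j$ sends the fixed points of $A_j$ to $0,\infty$ and $\phi_j(0)=i$'' is over-determined, since once the two boundary points are fixed the only remaining freedom is a dilation, which moves $\phi_j(0)$ along a ray.

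The paper obtains the $O(1)$ by a device your argument is missing. Rather than estimating the size of $t_n$ as an isometry, it writes $d_h(0,t_n(z))=\rho_{\Hp}(\widetilde{s}_n(i),\gamma(z))$ with $s_n=t_n^{-1}=B_n\circ\cdots\circ B_1$ the \emph{reversed} composition of the inverses, lifts to $\Hp$ sending $\alpha$ to $\infty$ so that each $\widetilde{B}_j$ becomes $z\mapsto k_j(z-X_j)+X_j$, and computes the single orbit point $\widetilde{s}_n(i)$ explicitly: its imaginary part is exactly $\prod_{m=1}^n k_m$, and its real part is $X_n+R_n$ with $R_n=\sum_{j=1}^n(X_{j-1}-X_j)\prod_{m=j}^n k_m$. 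The crucial feature is that each perturbation $X_{j-1}-X_j$ is multiplied by the contraction factors of \emph{all later} maps in the composition, so $R_n$ is geometrically damped and bounded even when $\sum_j|X_{j-1}-X_j|$ diverges. That damping, visible only when one follows an orbit point of the reversed composition rather than the norm or trace of the forward product, is exactly what converts the qualitative hypothesis $A_j\to A$ into an additive $O(1)$. To salvage your approach you would have to reproduce it at the matrix level: show that the component of the error introduced by each $H_j$ transverse to the expanding direction is contracted by every subsequent $L_{k_m}$, and that the component along the expanding direction can be absorbed by re-choosing the dilation freedom in the $\phi_j$; neither step is present in your sketch.
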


In particular, if $A_j = A$ for every $j \in \N$, then
\[d_h(0,A^n(z))=\log\left[1/k^n\right] + O(1),\]
as $n \to \infty$.

Will we need to use the following result on Blaschke products, see for example \cite{Beardon,CG}.
A Blaschke product $B$ is given by
\[B(z):=\zeta\prod_{i=1}^n\left({{z-a_i}\over {1-\overline{a_i}z}}\right)^{m_i},\]
where $\zeta\in\partial\D$ and $|a_i|<1$.

We are only concerned with Blaschke products of degree two and in this case we have the following standard result, see for example \cite{CG}.

\begin{proposition}\label{Blaschke}
Let $B$ be a Blaschke product of degree $2$. Then the Julia set $J(B)$ is contained in $S^1$ and we have the following cases:
\begin{itemize}
\item If $B$ has one fixed point in $S^1$, one fixed point in $\D$ and one fixed point in $\C \setminus \overline{\D}$, then $J(B) = S^1$.
\item If $B$ has one fixed point in $S^1$ of multiplicity three, and no other fixed points, then $J(B) = S^1$.
\item If $B$ has one repelling and one neutral fixed point in $S^1$, then $J(B)$ is a Cantor subset of $S^1$.
\item If $B$ has three fixed points in $S^1$, then $J(B)$ is a Cantor subset of $S^1$.
\end{itemize}
\end{proposition}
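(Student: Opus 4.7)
The plan is to classify the fixed-point configuration of $B$ using the Blaschke symmetry $B(1/\bar{z}) = 1/\overline{B(z)}$ together with the Denjoy--Wolff theorem, and then identify the Julia set in each resulting case. As a degree-$2$ rational map, $B$ has exactly three fixed points on $\hat{\mathbb{C}}$ counted with multiplicity, and the symmetry pairs any fixed point in $\D$ with one in the exterior. So either there is one fixed point in $\D$, one on $S^1$, and one in the exterior (case~1 of the proposition), or all three lie on $S^1$ (cases~2, 3, 4). At any $p\in S^1$ fixed by $B$ the multiplier $B'(p)$ is real and positive, which already rules out attracting $S^1$-fixed points with complex multiplier.

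For case~1, the interior fixed point $a$ is attracting because $B|_{\D}$ is not a M\"obius automorphism of $\D$ (having degree $2$), so the Schwarz--Pick lemma forces $|B'(a)|<1$. By Denjoy--Wolff the basin of $a$ is all of $\D$, and by the symmetry the basin of $1/\bar{a}$ is all of the exterior. Both basins are Fatou components whose common boundary is $S^1$, so $J(B)=S^1$, and the remaining $S^1$ fixed point is then automatically repelling.

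In the all-on-$S^1$ configurations, Denjoy--Wolff forces exactly one fixed point to be the boundary Denjoy--Wolff point, with multiplier at most $1$. Combined with the fact that $B$ has only three fixed points counted with multiplicity, this leaves precisely three possibilities: a triple fixed point (necessarily parabolic, case~2); a double fixed point with multiplier $1$ together with one simple repelling fixed point (case~3); or three simple fixed points on $S^1$, one with multiplier $<1$ and two with multiplier $>1$ (case~4). In every subcase, $\D$ and the exterior both lie in the Fatou component of the non-repelling fixed point (by Denjoy--Wolff and symmetry), so $J(B)\subseteq S^1$.

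For case~2, the Leau--Fatou flower theorem at the triple parabolic fixed point produces two attracting petals; by the symmetry these occupy $\D$ and the exterior, and no petal lies along $S^1$. Hence every point of $S^1$ near $p$ is pushed away under iteration, and a standard density argument for the degree-$2$ circle map $B|_{S^1}$ yields $J(B)=S^1$. For cases~3 and 4, the immediate basin of the non-repelling fixed point meets $S^1$ in a nonempty open arc (a parabolic petal along $S^1$ in case~3, the interval of $S^1$-attraction in case~4); its backward orbit under $B$ is a dense countable union of open arcs in $S^1$ contained in $F(B)$, and the complement $E$ is a nonempty, perfect, totally disconnected, $B$-invariant subset of $S^1$, hence a Cantor set equal to $J(B)$. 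The main obstacle is verifying this Cantor structure: one must set up a Markov-type description of $B|_{S^1}$ outside the immediate basin arc, using expansion at the repelling fixed points to show that $E$ is perfect and totally disconnected. In case~3 there is an additional subtlety of determining on which side of the parabolic fixed point the attracting petal extends along $S^1$, which comes from the sign of the next nonvanishing Taylor coefficient of $B(z)-z$ at that point.
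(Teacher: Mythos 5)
The first thing to note is that the paper does not prove this proposition at all: it is quoted as a standard fact with a pointer to Carleson--Gamelin, so there is no proof of record to compare you against. Your architecture is the standard one and most of it is sound: the count of three fixed points on $\hat{\C}$ with multiplicity, the pairing $z\leftrightarrow 1/\bar z$ forcing either the $(\D,\,S^1,\,\text{exterior})$ configuration or all three fixed points on $S^1$, Schwarz--Pick plus Denjoy--Wolff in case 1, and the Denjoy--Wolff trichotomy (triple parabolic; parabolic plus repelling; attracting plus two repelling) in the boundary case are all correct, as is the observation that boundary multipliers are real and positive. Two smaller points: in case 2 the assertion that ``by the symmetry'' the two petals occupy $\D$ and the exterior needs one more line, since symmetry alone also permits the two attracting directions to be the two tangent directions of $S^1$ at $p$ (rule that out by noting the two petal basins would then be distinct Fatou components each meeting the connected Fatou subset $\D$); and your ``standard density argument'' for $J(B)=S^1$ in case 2 can be made precise cheaply: $F(B)\cap S^1$ is an open subset of $S^1$ contained in the set of circle points whose orbits converge to $p$, and since both tangential directions at $p$ are repelling that set is only the countable backward orbit of $p$, hence empty.

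The genuine gap is in cases 3 and 4. You assert that the complement of the backward orbit of the immediate-basin arc is ``perfect, totally disconnected, hence a Cantor set,'' and then explicitly defer the verification as ``the main obstacle.'' But that verification \emph{is} the content of those two bullets: compactness and perfectness of $J(B)$ are automatic for any Julia set, so the whole claim is that $J(B)$ is nowhere dense in $S^1$ (equivalently that $F(B)\cap S^1$ is dense), and this is precisely the only part of the proposition the paper later uses, in the proof of Theorem \ref{s2t2}. Without an expansion estimate the arcs of your Markov description need not shrink, so the conclusion does not yet follow from what you have written. To close it in case 4, note that the two critical points of $B$ (one in $\D$, one in the exterior, none on $S^1$) are both attracted to the boundary fixed point by Denjoy--Wolff, so $B$ is hyperbolic and uniformly expanding on a neighbourhood of $J(B)$; the $2^n$ arcs of $B^{-n}$ applied to the complementary arc of the immediate basin then shrink geometrically and $J(B)=\bigcap_n B^{-n}\bigl(S^1\setminus\widetilde{\Lambda}^*\bigr)$ is totally disconnected. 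Case 3 requires the parabolic analogue (a Fatou-coordinate estimate at the double fixed point, or the no-wandering-intervals theorem for the induced circle map). As it stands the proposal records the correct statement and the correct strategy but leaves exactly this step unproved.
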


\section{Fixed rays of $H$}

To prove Theorem~\ref{s2t1} we use the following strategy.
\begin{itemize}
  \item Given $K,\theta$ show that the argument of $H=H_{K,\theta}$ induces a map $\widetilde{H}:S^1 \to S^1$.
  \item Determine the possible locations of fixed points of $\widetilde{H}$.
  \item When $\theta = 0$, show that if $K \leq 2$ then $\widetilde{H}$ has one repelling fixed point, or neutral in the case $K=2$, and if $K>2$ then $\widetilde{H}$ has three fixed points, two repelling and one attracting.
  \item When $\theta = \pi/2$, show that $\widetilde{H}$ only ever has one repelling fixed point.
  \item For $\theta \in (0,\pi/2)$, show that there exists $K_{\theta} >2$ such that $\widetilde{H}$ has two fixed points, one repelling and one neutral. If $K<K_{\theta}$, then $\widetilde{H}$ has one repelling fixed point. If $K>K_{\theta}$, then $\widetilde{H}$ has three fixed points, one attracting and two repelling.
\end{itemize}

\subsection{The induced map $\widetilde{H}$ of $S^1$.}

Given $K,\theta$ the map $H=H_{K,\theta}$ induces a map $S^1 \to S^1$ as follows.

\begin{definition}
Define $\widetilde{H}:S^1\to S^1$ by:
\[\widetilde{H}(e^{i\varphi})=e^{i\psi} \mbox{ where } \arg[H(re^{i\varphi})]=\psi \]
for any $r>0$.
\end{definition}

By lifting $\widetilde{H}$ to $\R$, we obtain a $2\pi$-periodic mapping $\R \to \R$. We will often confuse $\widetilde{H}$ with its lift to $\R$, but the usage of $\widetilde{H}$ should be clear from the context. We also remark that by the definition of $H$, $\widetilde{H}$ is actually a $\pi$-periodic mapping. We have that
\begin{equation}\label{polar}
\widetilde{H}(\varphi)= 2\theta + 2\tan^{-1} \left ( \frac{\tan(\varphi-\theta)}{K} \right )
\end{equation}
when $\widetilde{H}$ is viewed as the mapping lifted to $\R$.

\begin{figure}[h]\
\begin{center}
\input{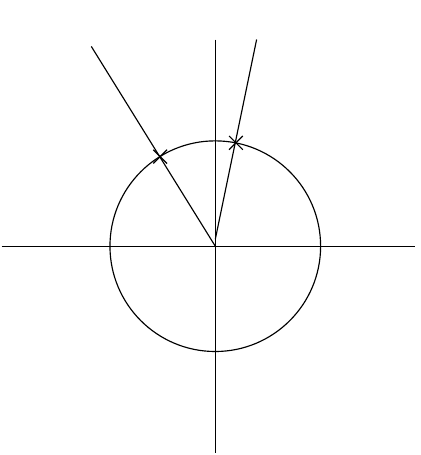_t}
\caption{Diagram showing how $\widetilde{H}$ is induced from the action of $H$ on the ray $R_\varphi$.}\label{s4f4}
\end{center}
\end{figure}

Viewed as a mapping on $S^1$, $\widetilde{H}$ is two-to-one. Points in $S^1$ correspond to rays in $\C$ and so fixed points of $\widetilde{H}$ correspond to fixed rays of $H$. In this way, we reduce our study of fixed rays of $H$ to fixed points of the circle endomorphism $\widetilde{H}$.
Given a sector $S \subset \C$, we will denote by $\widetilde{S}$ the corresponding subset of $S^1$ or interval in $\R / 2\pi \Z$.

We also define the homeomorphism $\widetilde{h}$ on $S^1$ induced by $h=h_{K,\theta}$.

\begin{definition}\label{h}
Define the map $\widetilde{h}:\R\to \R$ by:
\[\widetilde{h}(\varphi)=\psi \mbox{ where } \arg[h(re^{i\varphi})]= \psi\]
for any $r>0$.
\end{definition}

Recall from Lemma~\ref{Upsilon} that
\[H_{K,-\theta}(z)=\overline{H_{K,\theta}(\overline{z})}.\]
Hence we only need to consider the case $\theta\in[0,\pi/2]$. The ray $R_\varphi$ is a fixed ray of $H_{K,\theta}$ if and only if $R_{-\varphi}$ is a fixed ray of $H_{K,-\theta}$ and they have the same behaviour. For the rest of this section we assume $\theta\in[0,\pi/2]$.

\subsection{Locations of fixed rays of $H$}

By results in \cite{FG}, we know that $H$ has at least one fixed ray. First, we will narrow down the sectors where these fixed rays can be.

\begin{lemma}
\label{s4l1}
If $\theta > 0$ then any fixed ray $R_{\phi}$ of $H$ lies in one of the sectors
\[ \mathbb{F}^+_\theta=\{R_\varphi \;|\; 2\theta < \varphi < \theta + \pi/2\}, \]
or
\[ \mathbb{F}^-_\theta=\{R_\varphi \;|\; \theta-\pi/2 < \varphi < 0\}. \]
If $\theta=0$ any fixed ray is in $\mathbb{F}_\theta^\pm\cup\{R_0\}$. If $\theta=\pi/2$ then $R_0$ is the only fixed ray.
\end{lemma}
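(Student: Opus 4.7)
The plan is to analyze the induced circle map $\widetilde{H}$ using the polar formula \eqref{polar} together with a careful choice of lift to $\R$. Setting $G(\varphi) := \widetilde{H}(\varphi) - \varphi$, a ray $R_\varphi$ is fixed exactly when $G(\varphi) \in 2\pi\Z$. On the branch $\varphi \in ((\theta-\pi/2)+k\pi,\,(\theta+\pi/2)+k\pi)$ the continuous lift of $\widetilde{H}$ is
\[
\widetilde{H}(\varphi) \;=\; 2\theta + 2k\pi + 2\arctan\bigl(\tan(\varphi-\theta)/K\bigr)
\]
with principal $\arctan$. Putting $u = \varphi - \theta - k\pi \in (-\pi/2,\pi/2)$ and $Q(u) := -u + 2\arctan(\tan u/K)$ rewrites this as $G(\varphi) = \theta + k\pi + Q(u)$.

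The first step is to restrict fixed rays to Region $0$ (i.e.\ $k=0$). Since $K>1$, for $u\in(0,\pi/2)$ one has $0 < \arctan(\tan u/K) < u$, so $-u < Q(u) < u$; by oddness $|Q(u)| < |u| < \pi/2$ throughout the interior of $(-\pi/2,\pi/2)$. Hence $G(\varphi)$ lies in the open interval $(k\pi + \theta - \pi/2,\,k\pi + \theta + \pi/2)$, and for $\theta \in [0,\pi/2]$ this contains a multiple of $2\pi$ only when $k$ is even. Modulo $2\pi$ the even-$k$ branches collapse to Region $0$, so every fixed ray satisfies $\varphi \in (\theta-\pi/2,\,\theta+\pi/2)$.

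The second step excises the subinterval $[0,2\theta]$ of Region $0$. Oddness of $Q$ gives the reflection identity
\[
G(\theta+u) + G(\theta-u) \;=\; 2\theta,
\]
so it suffices to prove $0 < G(\varphi) < 2\theta$ on $[0,\theta]$, whence the same bounds follow on $[\theta,2\theta]$. The lower bound becomes, after writing $v = \theta - \varphi \in [0,\theta]$, the inequality $\theta + v > 2\arctan(\tan v/K)$, which holds because $\arctan(\tan v/K) < v$ and $v \le \theta$; the upper bound $G < 2\theta$ follows analogously. Combined with the first step this yields $\varphi \in \mathbb{F}^+_\theta \cup \mathbb{F}^-_\theta$ for $\theta \in (0,\pi/2)$.

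The endpoint cases drop out of the same analysis. For $\theta = 0$ the above estimates remain strict except at $\varphi = 0$, where $G(0) = 0$; so $R_0$ joins the fixed rays and all others lie in $\mathbb{F}^\pm_0$. For $\theta = \pi/2$ the sectors $\mathbb{F}^\pm$ are empty, so the only candidate in the closure of Region $0$ is the boundary point $\varphi = 0$, and $\lim_{\varphi\to 0^+}\widetilde{H}(\varphi) = 0$ shows $R_0$ is fixed; uniqueness follows because the cubic $P$ of \eqref{cubic} factors in this case as $(t+1)\bigl(Kt^2 + 2(1-K)t + K\bigr)$, whose quadratic factor has discriminant $4(1-2K) < 0$, so $t = -1$ (equivalently $\varphi = 0$) is its only real root. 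I expect the main technical point to be the clean execution of Step $1$, which hinges on the elementary but slightly delicate image bound $|Q(u)| < \pi/2$.
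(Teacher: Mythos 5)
Your proof is correct, and it reaches the same two--stage localization as the paper (first the half--plane about $R_\theta$, then the arc between $R_0$ and $R_{2\theta}$) by genuinely different means. The paper partitions the plane into four open quadrants $Q_1,\dots,Q_4$ about the axis $R_\theta$ and argues set--theoretically: $Q_4\cap H(Q_4)=\emptyset$; in $Q_3$ and in the positive--angle part of $Q_2$ the argument is strictly monotone under $\widetilde{h}$ and then under doubling, so no fixed ray exists there; and in $Q_1$ it simply observes $H(Q_1)\cap Q_1=\mathbb{F}^+_\theta$. You instead work with the explicit lift of $\widetilde{H}$ and bound the displacement $G(\varphi)=\widetilde{H}(\varphi)-\varphi$ branch by branch via the elementary estimate $|{-u}+2\arctan(\tan u/K)|<|u|$, which eliminates the antipodal half--plane (the paper's $Q_3\cup Q_4$), and then excise $[0,2\theta]$ using the reflection identity $G(\theta+u)+G(\theta-u)=2\theta$, which replaces the paper's $Q_1$/$Q_2$ analysis. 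Your route buys a uniform quantitative bound ($0<G<2\theta$ on all of $[0,2\theta]$) and, for $\theta=\pi/2$, an honest uniqueness argument via the factorization of the cubic \eqref{cubic}, where the paper only appeals to ``similar arguments.'' The one point to tidy is the branch endpoints $\varphi=\theta\pm\pi/2+k\pi$: there $|Q|$ attains $\pi/2$, so your strict interior bound does not directly apply; you treat this for $\theta=\pi/2$, and for $\theta<\pi/2$ one should add the one--line check that $G=\theta+k\pi\pm\pi/2\notin 2\pi\Z$. Since the paper's open quadrants have the same blind spot on their boundary rays, this is a shared and easily repaired omission rather than a gap.
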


\begin{figure}[h]
\begin{center}
\input{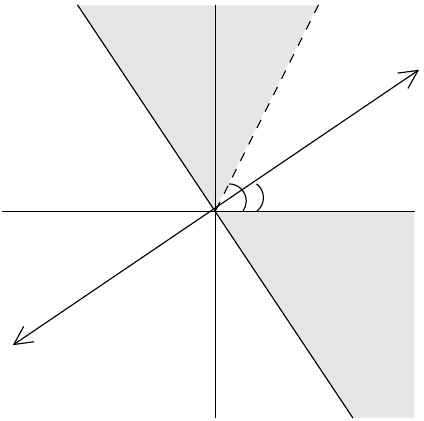_t}
\caption{Diagram showing the regions $\mathbb{F}_\theta^\pm$.}\label{s4f3}
\end{center}
\end{figure}

\begin{proof}
Recall that our normalization for $\theta$ requires $\theta \in (-\pi/2,\pi/2]$ and that by Lemma \ref{Upsilon} we need only consider $\theta\geq 0$. Define the following quadrants of $\C$:
\begin{align*}
Q_1 &= \{R_\varphi \;|\; 0 < \varphi-\theta < \pi/2\},\\
Q_2 &= \{R_\varphi \;|\; -\pi/2 < \varphi-\theta <0\},\\
Q_3 &= \{R_\varphi \;|\; -\pi/2 < \varphi-\theta < -\pi\}, \\
Q_4 &= \{R_\varphi \;|\; \pi/2 < \varphi-\theta < \pi\}.
\end{align*}

Let $0< \theta <\pi/2$. First we consider $Q_4$. Under $H$, the image of $Q_4$ is
\[ H(Q_4)=\{R_\varphi \;|\; -\pi<\varphi-2\theta<0\}. \]
We notice that $Q_4\cap H(Q_4)=\emptyset$ and so there can be no fixed
ray in the sector $Q_4$. Next we consider the rays in $Q_3$. For simplicity we will consider rays to have angle between $-2\pi$ and $0$. Now
\[ H(Q_3)=\{R_\varphi \;|\; -2\pi< \varphi-2\theta < \pi\}. \]
Recalling that $0<\theta<\pi/2$, we have $H(Q_3)\cap Q_3 \neq
\emptyset$, so it is possible that there is a fixed ray in $Q_3$.
However note that $h(Q_3)=Q_3$ and that for $R_\varphi \in Q_3$ if
$R_\psi=h(R_\varphi)$ then $-\pi<\psi<\varphi<0$. Squaring doubles the
angle so if $R_\tau=H(R_\varphi)$ the angles must satisfy
\[ -2\pi<\tau<\psi<\varphi<0. \]
This holds for all $R_\varphi\in Q_3$ and so there can be no fixed ray in $Q_3$.

Next consider the quadrant $Q_1$. Note that $H(R_\theta)=R_{2\theta}$, so $H(Q_1)\cap Q_1 = \mathbb{F}^+_\theta$. Hence any fixed ray of $Q_1$ must lie in $\mathbb{F}^+_\theta$. Finally, if $R_\varphi\in Q_2$ and $\varphi>0$ then, similar to the case of rays in $Q_3$, if $R_\psi=h(R_\varphi)$ then $0<\varphi<\psi<\theta$. Squaring further increases the angle so if $R_\tau=H(R_\varphi)$ then the angles must satisfy
\[0<\varphi<\psi<\tau<2\theta.\]
Hence any fixed ray of $Q_2$ must lie in $\mathbb{F}^-_\theta$ as claimed.

When $\theta=0$ the above holds with the addition that the ray $R_0$ is always fixed. It is easy to see that $R_0$ is fixed when $\theta = \pi/2$, and similar arguments to the above show that this is the only possible fixed ray.
\end{proof}

\subsection{Local expansion and contraction}

In this subsection we will study $\widetilde{H}'$, as this determines whether a small neighbourhood of $\varphi$ is contracted or expanded under $\widetilde{H}$. Since $\widetilde{H}$ is sense-preserving, $\widetilde{H}'>0$. Let us make this more precise.

\begin{definition}\label{expandcontract}
An interval $I\subset \R/2\pi\Z$ is \emph{expanded} by $\widetilde{H}$ if
\[|\widetilde{H}(I)|>|I|,\]
and is \emph{contracted} by $\widetilde{H}$ if
\[|\widetilde{H}(I)|<|I|.\]
\end{definition}

It is easy to see that if $\widetilde{H}'(\varphi)<1$ or $\widetilde{H}'(\varphi)>1$ then there exists some neighbourhood $V$ of $\varphi$ that is contracted or expanded respectively by $\widetilde{H}$. Further if there is some closed interval $I$ such that $\widetilde{H}'(\varphi)<1$ or $\widetilde{H}'(\varphi)>1$ for all $\varphi\in I$ then it follows that $I$ is contracted or expanded respectively by $\widetilde{H}$.

\begin{lemma}\label{s4l3}
For $K<2$ and any $\theta$, $\widetilde{H}'(\varphi)>1$ for all $\varphi \in \R/2\pi \Z$. When $K>2$ there is a single interval $J\subset(\theta-\pi/2,\theta+\pi/2)$ where we have $\widetilde{H}'(\varphi)<1$ and further, $\theta$ is the midpoint of $J$.
\end{lemma}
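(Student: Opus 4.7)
The plan is to differentiate the explicit formula \eqref{polar} and reduce the lemma to a one-variable sign analysis of a rational expression. Writing $u = \varphi - \theta$, the chain rule applied to \eqref{polar} gives, away from the two points where $\tan u$ blows up,
\[
\widetilde{H}'(\varphi) \;=\; \frac{2K \sec^2 u}{K^2 + \tan^2 u} \;=\; \frac{2K(1+\tan^2 u)}{K^2 + \tan^2 u}.
\]
At the remaining points $\varphi = \theta \pm \pi/2$, viewing $\widetilde{H}$ as a self-map of $S^1$ one sees it is still smooth, and its derivative is obtained by passing to the limit $\tan^2 u \to \infty$, giving the value $2K > 1$. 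It therefore suffices to analyse the displayed rational expression for $t := \tan^2 u \in [0,\infty)$.

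The next step is to subtract $1$ and collect terms, obtaining
\[
\widetilde{H}'(\varphi) - 1 \;=\; \frac{(2K-1)\,t + K(2-K)}{K^2 + t}.
\]
The denominator is positive, so the sign of $\widetilde{H}'(\varphi)-1$ is controlled by the numerator $N(t) := (2K-1)\,t + K(2-K)$. Since $K>1$, the coefficient $2K-1$ is positive and $N$ is strictly increasing in $t$.

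For $K < 2$, both coefficients of $N$ are strictly positive, so $N(t) > 0$ for every $t \ge 0$; together with the endpoint value $2K > 1$ at the singular points, this gives $\widetilde{H}'(\varphi) > 1$ on all of $\R/2\pi\Z$. For $K>2$ we have $N(0) = K(2-K) < 0$ while $N(t) \to \infty$, so $N$ has a unique zero at
\[
t_0 \;=\; \frac{K(K-2)}{2K-1} \;>\; 0
\]
and is negative exactly on $[0,t_0)$. Translating back, the inequality $\widetilde{H}'(\varphi) < 1$ becomes $|\tan(\varphi-\theta)| < \sqrt{t_0}$. On the open arc $\varphi - \theta \in (-\pi/2,\pi/2)$ the function $|\tan(\varphi-\theta)|$ vanishes at $\theta$ and is strictly increasing in $|\varphi-\theta|$, so this inequality carves out a single open interval $J$ centred at $\theta$ and strictly contained in $(\theta - \pi/2, \theta + \pi/2)$; the $\pi$-periodicity of $\widetilde{H}$ then prevents any further contribution on the rest of the circle, yielding exactly the statement of the lemma.

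The only real bookkeeping issue I anticipate is the behaviour at $\varphi = \theta \pm \pi/2$, where $\tan(\varphi-\theta)$ is undefined and the formula \eqref{polar} cannot be used directly. Once one checks that $\widetilde{H}$ extends smoothly there with derivative $2K$ (either by taking the one-sided limit in \eqref{polar} modulo $2\pi$, or by computing $\widetilde{H}$ directly from $H(re^{i\varphi})$ at those angles), everything reduces to the above sign analysis, which is straightforward.
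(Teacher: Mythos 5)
Your proof is correct, and it begins the same way the paper does, by differentiating \eqref{polar}; your expression $2K(1+\tan^2 u)/(K^2+\tan^2 u)$ is the same as the paper's $2K/(1+(K^2-1)\cos^2(\varphi-\theta))$. Where you diverge is in establishing that $J$ is a \emph{single} interval centred at $\theta$: the paper argues this by computing $\widetilde{H}''$, showing $\theta$ is the only critical point of $\widetilde{H}'$ on $(\theta-\pi/2,\theta+\pi/2)$, and combining this with the boundary values $\widetilde{H}'\to 2K$; you instead substitute $t=\tan^2(\varphi-\theta)$ and reduce $\widetilde{H}'(\varphi)<1$ to the linear inequality $(2K-1)t+K(2-K)<0$, which solves explicitly to $|\tan(\varphi-\theta)|<\sqrt{K(K-2)/(2K-1)}$. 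Your route is cleaner and buys more: it produces the endpoints of $J$ in closed form (consistent with the expression $\varphi_K^+=\cos^{-1}[((2K-1)/(K^2-1))^{1/2}]+\theta$ that the paper derives separately later), and it makes the symmetry about $\theta$ and the $K=2$ borderline case immediate. One small correction: $\pi$-periodicity does not ``prevent any further contribution on the rest of the circle''---it produces a second interval, namely the translate of $J$ by $\pi$, where $\widetilde{H}'<1$ (the paper notes this explicitly after Definition \ref{jdef}). This does not affect the lemma, which only asserts uniqueness of $J$ within $(\theta-\pi/2,\theta+\pi/2)$, but the sentence as written is inaccurate. Your handling of the points $\varphi=\theta\pm\pi/2$ is fine, though it is worth noting that the paper's form of the derivative in terms of $\cos^2$ is already well defined there (with value $2K$), so no limiting argument is actually needed if one works with that expression.
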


\begin{proof}
By differentiating the expression for $\widetilde{H}$ we obtain
\begin{equation}\label{diff}
\widetilde{H}'(\varphi)=\frac{2K}{1 + (K^2 - 1)\cos^2(\varphi - \theta)}.
\end{equation}
Note that $\widetilde{H}'$ is continuous and that
\begin{equation}\label{s4e01}
\widetilde{H}'(\varphi)<1 \iff 2K<1+(K^2-1)\cos^2(\varphi-\theta).
\end{equation}
It is easy to see that
\[1+(K^2-1)\cos(\varphi-\theta)\leq K^2,\]
and hence if $K<2$
\begin{equation}\label{s4e02}
2K>K^2\geq1+(K^2-1)\cos^2(\varphi-\theta).
\end{equation}
Then \eqref{s4e01} and \eqref{s4e02} imply $\widetilde{H}'(\varphi)>1$ when $K<2$ proving the first part of the lemma.
Considering $\varphi=\theta$, we see
\begin{equation}\label{s4e03}
\widetilde{H}'(\theta)=\frac{2K}{1 + (K^2 - 1)\cos^2(0)}=\frac{2}{K}.
\end{equation}
It is easy to see that if $K=2$ then $\widetilde{H}'(\theta)=1$ and for $\varphi\neq\theta$ we have $\widetilde{H}'(\varphi)>1$.
For $K>2$ we have $\widetilde{H}'(\theta)<1$ by \eqref{s4e03}. As $\widetilde{H}'$ is continuous we must have some interval $J$ containing $\theta$ such that $\widetilde{H}'(\varphi)<1$ for $\varphi\in J$. We want to show this is the only interval of $(\theta-\pi/2,\theta+\pi/2)$ with this property. Note that
\[\widetilde{H}'(\varphi)\to 2K \mbox{ as } \varphi-\theta\to\pm\pi/2,\]
so $J\neq(\theta-\pi/2,\theta+\pi/2).$ We have to show there is no other region where $\widetilde{H}'<1$. To do this we differentiate again to obtain
\[\widetilde{H}''(\varphi)=\frac{(2K^3-2K)\tan(\varphi-\theta)}{\cos^2(\varphi-\theta)(K^2+\tan^2(\varphi-\theta))^2}.\]
It is easy to see that when $\widetilde{H}''(\varphi)=0$, that we are at a local minimum or maximum of $\widetilde{H}'$. Now $\widetilde{H}''(\varphi)=0$ implies
\[(2K^3-2K)\tan(\varphi-\theta)=0\]
which, as $K>2$ and $\varphi-\theta\in(-\pi/2,\pi/2)$, implies that $\varphi=\theta.$

We know that $\widetilde{H}'(\varphi)>1$ for $\varphi$ near $\pm\pi/2$ and that there is only one critical point of $\widetilde{H}$ at $\varphi=\theta$. Hence $J$ is the only interval such that $\varphi\in(\theta-\pi/2,\theta+\pi/2)$ implies $\widetilde{H}'(\varphi)<1$. The final statement that $\theta$ is the midpoint of $J$ follows from the fact that $\cos^2(\varphi-\theta)$, and so $\widetilde{H}'$ too, is symmetric about $\theta$.
\end{proof}

\begin{definition}\label{jdef}
Given $K>2$ and $\theta$, denote by $J=J_K$ the interval $(\theta - \eta, \theta + \eta)$, for $\eta = \eta_K$, where $\widetilde{H}'(\theta)<1$.
Note that $\eta$ does not depend on $\theta$.
\end{definition}

We remark that as $\widetilde{H}$ is $\pi$-periodic, the translate of $J$ by $\pi$
is a second interval where $\widetilde{H}'(\varphi)<1$. However, there can be no fixed points here from Lemma~\ref{s4l1} and so we are not concerned with this other interval in this section.

\subsection{Special cases}

We will now investigate the fixed points of $\widetilde{H}$. The cases where $\theta=0$ and $\theta=\pi/2$ are special cases. We first show that if $\theta = 0$, then $\widetilde{H}$ can never have a neutral fixed point which is not $0$.

\begin{lemma}\label{H'0}
Let $\theta=0$. Suppose $\varphi\neq0$ and $\widetilde{H}'(\varphi)=1$, then $\varphi$ cannot be fixed.
\end{lemma}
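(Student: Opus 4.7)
The plan is to compare the two conditions $\widetilde{H}(\varphi)=\varphi$ and $\widetilde{H}'(\varphi)=1$ explicitly in the case $\theta=0$ and show they can only be simultaneously satisfied at $\varphi = 0$. When $\theta=0$, formula (4.1) becomes $\widetilde{H}(\varphi)=2\tan^{-1}(\tan(\varphi)/K)$ and (4.2) becomes $\widetilde{H}'(\varphi) = 2K/(1+(K^2-1)\cos^2\varphi)$.

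First I would translate the derivative condition. Setting $\widetilde{H}'(\varphi)=1$ and solving gives
\[\cos^2\varphi = \frac{2K-1}{(K-1)(K+1)},\]
which (as already noted in the proof of Lemma~\ref{s4l3}) is only possible when $K\geq 2$.

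Next I would use the cubic $P(t)$ from \eqref{cubic} (or equivalently substitute $t=\tan(\varphi/2)$ into the fixed-point equation $\widetilde{H}(\varphi)=\varphi$ and use the double angle formula $\tan\varphi = 2t/(1-t^2)$). With $\theta = 0$ this cubic collapses to $P(t) = t(Kt^2+(2-K))$, so a non-zero fixed ray must satisfy $\tan^2(\varphi/2) = (K-2)/K$. Converting via $\cos\varphi = (1-t^2)/(1+t^2)$ yields $\cos\varphi = 1/(K-1)$, i.e.\
\[\cos^2\varphi = \frac{1}{(K-1)^2}.\]

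The final step is to equate the two expressions for $\cos^2\varphi$:
\[\frac{1}{(K-1)^2} = \frac{2K-1}{(K-1)(K+1)},\]
which simplifies to $2K^2-4K=0$, forcing $K=2$. But $K=2$ makes $\tan^2(\varphi/2)=(K-2)/K=0$, i.e.\ $\varphi=0$, contradicting the assumption $\varphi\neq 0$. Hence no non-zero fixed point can have $\widetilde{H}'(\varphi)=1$.

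I do not anticipate any real obstacle here; the whole argument is an explicit algebraic comparison, and the only thing to be a little careful about is the bookkeeping in the half-angle substitution and the use of the existing cubic from \eqref{cubic} to avoid re-deriving the fixed-point equation from scratch.
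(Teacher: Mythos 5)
Your argument is correct. Both you and the paper prove the lemma by showing that the two conditions $\widetilde{H}'(\varphi)=1$ and $\widetilde{H}(\varphi)=\varphi$ force $K=2$ and hence $\varphi=0$, but the computational route differs in a way worth noting. The paper solves $\widetilde{H}'(\varphi)=1$ for $\varphi$ explicitly as an arccosine, substitutes that expression into the fixed-point equation $\varphi = 2\tan^{-1}(\tan\varphi/K)$, and then grinds through double-angle and $\tan^2(\cos^{-1}X)$ identities to reach an identity in $K$ alone, which after squaring and factoring out the spurious roots $K=0,1$ leaves $K^2(K-2)^2=0$. You instead extract from each condition a closed form for $\cos^2\varphi$: the derivative condition gives $\cos^2\varphi=(2K-1)/(K^2-1)$ directly from \eqref{diff}, while the fixed-point condition is read off from the cubic \eqref{cubic}, which at $\theta=0$ collapses to $t(Kt^2+(2-K))$ so that a nonzero fixed angle satisfies $\tan^2(\varphi/2)=(K-2)/K$ and hence $\cos\varphi=1/(K-1)$ by the half-angle identity. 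Equating the two expressions gives $2K(K-2)=0$ in one line, and $K=2$ forces $t=0$, i.e.\ $\varphi=0$. Your route is shorter and less error-prone because it reuses the already-established cubic rather than re-deriving the fixed-point condition trigonometrically; the only point to be careful about (and you are) is that the squaring in $\cos^2\varphi=1/(K-1)^2$ introduces no spurious solutions since $K>1$ makes both sides' square roots positive.
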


\begin{proof}
As $\widetilde{H}'(\varphi)=1$ then \eqref{diff} implies
\[\varphi=\cos^{-1}\left[\left(\frac{2K-1}{K^2-1}\right)^\frac{1}{2}\right],\]
where we take the positive square root since $|\varphi|<\pi/2$.
Suppose that $\varphi$ is fixed so that $\widetilde{H}(\varphi) = \varphi$, then \eqref{polar} implies
\begin{equation}\label{eH'1}
\cos^{-1}\left[\left(\frac{2K-1}{K^2-1}\right)^\frac{1}{2}\right]=2\tan^{-1}\left[\frac{\tan\left(\cos^{-1}\left[\left(\frac{2K-1}{K^2-1}\right)^\frac{1}{2}\right]\right)}{K}\right].
\end{equation}
Applying $\cos$ to both sides of \eqref{eH'1}, using the double angle formula for $\cos$ and the identity $\cos^2 (\tan^{-1} x) = (1+x^2)^{-1}$, we obtain
\begin{align}
\left(\frac{2K-1}{K^2-1}\right)^\frac{1}{2}&=\frac{1-\tan^2\left(\cos^{-1}\left[\left(\frac{2K-1}{K^2-1}\right)^\frac{1}{2}\right]\right)/K^2}{1+\tan^2\left(\cos^{-1}\left[\left(\frac{2K-1}{K^2-1}\right)^\frac{1}{2}\right]\right)/K^2}\notag\\
&=\frac{K^2-\tan^2\left(\cos^{-1}\left[\left(\frac{2K-1}{K^2-1}\right)^\frac{1}{2}\right]\right)}{K^2+\tan^2\left(\cos^{-1}\left[\left(\frac{2K-1}{K^2-1}\right)^\frac{1}{2}\right]\right)}.\label{eH'2}
\end{align}
Using the formula $\tan^2(\cos^{-1}X)=(1-X^2)/X^2$ and rearranging, \eqref{eH'2} becomes
\[(2K-1)^{\frac{1}{2}}(K^2(2K-1)+(K^2-1)-(2K-1))=(K^2-1)^{\frac{1}{2}}(K^2(2K-1)-(K^2-1)+(2K-1)).\]
Rearranging and squaring both sides we see
\begin{equation}\label{eH'3}
4K^2(2K-1)(K^2-1)^2=4K^2(K^2-1)(K^2-K+1)^2.
\end{equation}
Hence $K=0$ and $K=1$ are solutions to \eqref{eH'1}. Factoring these solutions out of \eqref{eH'3} and expanding, we obtain
\begin{equation}\label{eH'4}
K^2(K-2)^2=0
\end{equation}
$K=0$ and $K=2$ are solutions of \eqref{eH'4}. Hence all possible solutions to \eqref{eH'1} are $K=0,1,2$. Since $K=0$ and $K=1$ are not permissible values, the only valid solution for us is $K=2$, which implies that $\varphi=0$. This completes the proof.
\end{proof}

\begin{lemma}\label{theta0}
If $\theta=0$ then $\widetilde{H}$ has one repelling fixed point $\phi_0=0$ when $K<2$, has one neutral fixed point when $K=2$ and has three fixed points $\-\pi/2<\phi_2<\phi_0=0<\phi_1<\pi/2$ when $K>2$, where $\phi_1$ and $\phi_2$ are repelling and $\phi_0$ is attracting. Further $\phi_2=-\phi_1$.
\end{lemma}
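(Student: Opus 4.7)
The plan is to combine the derivative formula \eqref{diff} with Lemmas \ref{s4l1}, \ref{s4l3}, and \ref{H'0}, exploiting the fact that when $\theta=0$ the map $\widetilde{H}$ is odd, since $\tan$ is odd. First I would note that $\widetilde{H}(0)=2\tan^{-1}(0)=0$, so $\varphi=0$ is always a fixed point, and from \eqref{diff},
\[ \widetilde{H}'(0) = \frac{2K}{1+(K^2-1)} = \frac{2}{K}. \]
This immediately classifies $\varphi_0=0$ as repelling for $K<2$, neutral for $K=2$, and attracting for $K>2$. I also record the symmetry $\widetilde{H}(-\varphi)=-\widetilde{H}(\varphi)$, which forces fixed points other than $0$ to come in pairs $\pm\phi$, and makes $\widetilde{H}'$ an even function of $\varphi$.

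Next I would study the auxiliary function $g(\varphi):=\widetilde{H}(\varphi)-\varphi$ on the interval $(-\pi/2,\pi/2)$, where all fixed rays must lie by Lemma \ref{s4l1}. For $K<2$, Lemma \ref{s4l3} gives $\widetilde{H}'>1$ everywhere, so $g'>0$, hence $g$ is strictly increasing and $\varphi=0$ is its unique zero. For $K=2$, the proof of Lemma \ref{s4l3} shows $\widetilde{H}'(\varphi)\geq 1$ with equality only at $\varphi=0$; combined with Lemma \ref{H'0}, which excludes other neutral fixed points, $g$ is again strictly increasing and $\varphi=0$ is the only fixed point.

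The substantial case is $K>2$. Here Lemma \ref{s4l3} provides a single interval $J=(-\eta,\eta)$ in $(-\pi/2,\pi/2)$ on which $\widetilde{H}'<1$, with $\widetilde{H}'>1$ on $(-\pi/2,-\eta)\cup(\eta,\pi/2)$. Hence $g$ is strictly decreasing on $J$ and strictly increasing outside, so $g(\eta)<g(0)=0$. On the other hand, as $\varphi\to\pi/2^-$ one has $\tan(\varphi)/K\to+\infty$, so $\widetilde{H}(\varphi)\to\pi$ and thus $g(\varphi)\to\pi/2>0$. The intermediate value theorem, together with strict monotonicity of $g$ on $(\eta,\pi/2)$, produces a unique fixed point $\phi_1\in(\eta,\pi/2)$; since $\widetilde{H}'(\phi_1)>1$ it is repelling. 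Applying the odd symmetry gives the reflected fixed point $\phi_2=-\phi_1\in(-\pi/2,-\eta)$, which is repelling because $\widetilde{H}'$ is even, and rules out any further fixed points.

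The only real obstacle is the bookkeeping in the $K>2$ case: one needs the boundary behaviour $g(\pi/2^-)=\pi/2>0$ together with the sign of $g(\eta)$ to guarantee one (and only one) fixed point in $(\eta,\pi/2)$, and then rely on the odd symmetry rather than repeating the analysis on $(-\pi/2,-\eta)$.
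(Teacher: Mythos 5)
Your proposal is correct and follows essentially the same route as the paper: the fixed point at $0$ classified via $\widetilde{H}'(0)=2/K$, the single contraction interval $J=(-\eta,\eta)$ from Lemma \ref{s4l3}, the boundary behaviour $\widetilde{H}(\varphi)\to\pi$ as $\varphi\to\pi/2^-$ to produce $\phi_1$ by the intermediate value theorem, and oddness of $\tan$ to get $\phi_2=-\phi_1$. The only (harmless) cosmetic difference is that you phrase the uniqueness arguments through monotonicity of $g(\varphi)=\widetilde{H}(\varphi)-\varphi$, which lets you deduce $g(\eta)<0$ directly instead of invoking Lemma \ref{H'0} as the paper does.
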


\begin{figure}[h]
\begin{center}
\input{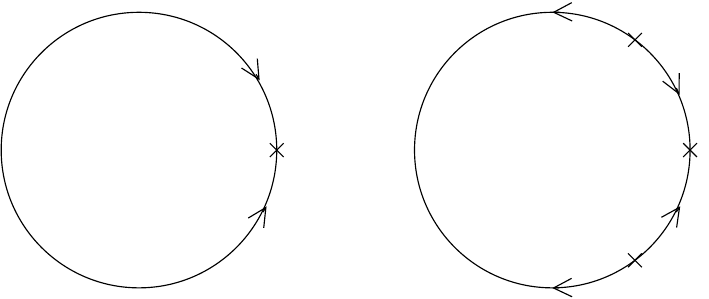_t}
\caption{Diagram showing the local dynamics of $\phi_i$ in the two cases.}\label{s4f5}
\end{center}
\end{figure}

\begin{proof}
First substitute $\theta=0$ into \eqref{polar} to obtain
\[\widetilde{H}(\varphi)=2\tan^{-1}[(\tan\varphi)/K].\]
Then any fixed point $\phi$ must satisfy the equation
\begin{equation}\label{eqzero}
K\tan(\phi/2)=\tan\phi.
\end{equation}
Since $\phi_0=0$ satisfies \eqref{eqzero}, it is always a fixed point when $\theta=0$. Lemma~\ref{s4l3} implies that for $K<2$, $\widetilde{H}'(\phi_0)>1$, so $\phi_0$ is repelling. When $K>2$ we see $\widetilde{H}'(0)<1$ and $\phi_0=0$ is attracting. Let $K<2$ and suppose we had some other fixed point $\phi$. Without loss of generality, assume $\phi>\phi_0$ then the interval $[\phi_0,\phi]$ is fixed under $\widetilde{H}$, since the image of $[0,\pi]$ under $\widetilde{H}$ is $[0,2\pi]$. However this interval must also be expanded, as all $\varphi\in[\phi_0,\phi]$ satisfy $\widetilde{H}'(\varphi)>1$, a contradiction.

Now let $K>2$. Recall Definition \ref{jdef} and the interval $J$. Write $J=(-\eta,\eta)$ and recall that Lemma~\ref{H'0} implies that neither $\pm \eta$ can be fixed for $K>2$, and hence
\[|\widetilde{H}(J)|<|J|.\]
Since $\widetilde{H}(\pi/2) > \pi/2$ and $\widetilde{H}(\eta) < \eta$, by continuity there exists a fixed point $\phi_1 \in (\eta, \pi/2)$. Similarly, there is a fixed point $\phi_2 \in (-\pi/2, -\eta)$.
Further, $\widetilde{H}'(\phi_i)>1$ for $i=1,2$, so they are repelling. As $\phi_1$ and $\phi_2$ must satisfy \eqref{eqzero} and since $\tan$ is odd we must have $\phi_2=-\phi_1$.
Since $\widetilde{H}$ can have at most three fixed points in $S^1$, these account for them all.

Finally we deal with the case when $K=2$. Here we have from Lemma~\ref{s4l3} that $\widetilde{H}'(0)=1$ and so $\phi_0$ is a neutral fixed point. However $\widetilde{H}'(\varphi)>1$ for $\varphi\in(-\pi/2,\pi/2) \setminus \{ 0 \}$, so any interval with one end-point $0$ is expanded by $\widetilde{H}$. This implies there are no other fixed points.
\end{proof}

\begin{lemma}
If $\theta=\pi/2$ then $\phi_0=0$ is the only fixed point of $\widetilde{H}$ for all $K>1$ and it is always repelling.
\end{lemma}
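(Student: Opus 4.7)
The plan is to combine three ingredients already developed in the section. First, Lemma~\ref{s4l1} has already established that when $\theta=\pi/2$, the only candidate fixed ray of $H$ is $R_0$, so the only possible fixed point of $\widetilde{H}$ on $\R/2\pi\Z$ is $\phi_0=0$. This kills the existence question for free, and reduces the work to (a) confirming that $R_0$ really is fixed, and (b) showing that $\widetilde{H}'(0)>1$.

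For (a), I would simply evaluate $h_{K,\pi/2}$ on the positive real axis: for $r>0$,
\[
h_{K,\pi/2}(r) = \tfrac{K+1}{2}r + e^{i\pi}\tfrac{K-1}{2}r = r,
\]
so $H(r)=r^2>0$, showing $H(R_0)=R_0$ and hence $\widetilde{H}(0)=0$. Alternatively one could substitute $\theta=\pi/2$, $\varphi=0$ directly into the formula \eqref{polar}.

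For (b), I would apply the derivative formula \eqref{diff} derived in Lemma~\ref{s4l3}: with $\theta=\pi/2$ and $\varphi=0$ we have $\cos^2(\varphi-\theta)=\cos^2(\pi/2)=0$, so
\[
\widetilde{H}'(0)=\frac{2K}{1+(K^2-1)\cdot 0}=2K.
\]
Since $K>1$, we have $\widetilde{H}'(0)=2K>2>1$, so $\phi_0=0$ is repelling for every $K>1$.

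There is no real obstacle here: the content of the lemma has been front-loaded into Lemma~\ref{s4l1} (uniqueness of the candidate) and into the general derivative computation of Lemma~\ref{s4l3} (repulsion). One small point worth noting, and which I would flag in passing, is that this case is genuinely different from the $\theta=0$ case: in the $\pi/2$ case the unique fixed ray cannot turn neutral or split off further fixed rays as $K$ grows, because the interval $J_K$ of Definition~\ref{jdef} is centered at $\theta=\pi/2$ rather than at $0$, and so stays away from the fixed point $\phi_0=0$ for every $K$.
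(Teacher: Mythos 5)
Your proposal is correct and follows essentially the same route as the paper: uniqueness of the candidate fixed point is delegated to Lemma~\ref{s4l1}, and repulsion follows by substituting $\varphi=0$, $\theta=\pi/2$ into \eqref{diff} to get $\widetilde{H}'(0)=2K>1$. Your explicit verification that $R_0$ is fixed and your closing remark about the interval $J_K$ are harmless additions, but the argument is the paper's.
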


\begin{proof}
By Lemma \ref{s4l1} we know $\phi_0=0$ is the only fixed point of $\widetilde{H}$. Substituting $\varphi=0$ and $\theta=\pi/2$ into \eqref{diff} we see
\[\widetilde{H}'(\phi_0)=2K.\]
As $K>1$ we have that $\phi_0$ is repelling.
\end{proof}

\subsection{The general case $\theta \in (0,\pi/2)$}

Recall the sectors $F_{\theta}^{\pm}$ from Lemma \ref{s4l1}, and the corresponding intervals $\widetilde{F}^{\pm}_{\theta}$ in $S^1$.

\begin{lemma}\label{s4l5}
There is exactly one fixed point $\phi\in\widetilde{\mathbb{F}}^-_\theta$ of $\widetilde{H}$ for all $K>1$. Further, it is a repelling fixed point.
\end{lemma}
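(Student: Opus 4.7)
The plan is to study the single-variable function $g(\varphi) := \widetilde{H}(\varphi) - \varphi$ on the open interval $\widetilde{\mathbb{F}}_\theta^- = (\theta-\pi/2, 0)$, exploiting strict concavity of $\widetilde{H}$ there. For existence I will use the explicit formula \eqref{polar} to evaluate the one-sided boundary values of $g$: as $\varphi \to \theta - \pi/2$ from the right, $\tan^{-1}(\tan(\varphi-\theta)/K) \to -\pi/2$, so $g(\theta-\pi/2) = 2\theta - \pi - (\theta - \pi/2) = \theta - \pi/2 < 0$, while at the other endpoint $g(0) = 2\theta - 2\tan^{-1}(\tan\theta/K) > 0$ because $K>1$ together with monotonicity of $\tan^{-1}$ forces $\tan^{-1}(\tan\theta/K) < \theta$. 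Continuity of $g$ then yields at least one fixed point in $\widetilde{\mathbb{F}}_\theta^-$.

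The key structural step is to establish that $\widetilde{H}$ is strictly concave on $\widetilde{\mathbb{F}}_\theta^-$. Differentiating \eqref{diff} produces
\[
\widetilde{H}''(\varphi) = \frac{2K(K^2-1)\sin(2(\varphi-\theta))}{\bigl[1+(K^2-1)\cos^2(\varphi-\theta)\bigr]^2}.
\]
For $\varphi \in \widetilde{\mathbb{F}}_\theta^-$, one has $\varphi - \theta \in (-\pi/2, -\theta)$, so $2(\varphi-\theta) \in (-\pi, 0)$ and $\sin(2(\varphi-\theta)) < 0$. Hence $\widetilde{H}'' < 0$, and $\widetilde{H}'$ strictly decreases on $\widetilde{\mathbb{F}}_\theta^-$ from the limiting value $\widetilde{H}'(\theta-\pi/2) = 2K$ down to $\widetilde{H}'(0) = 2K/[1+(K^2-1)\cos^2\theta]$.

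To conclude uniqueness and the repelling property I would split into two cases according to the sign of $\widetilde{H}'(0) - 1$. If $\widetilde{H}'(0) \ge 1$, then $\widetilde{H}' > 1$ throughout $\widetilde{\mathbb{F}}_\theta^-$ (strictly, away from the right endpoint), so $g$ is strictly increasing, its zero is unique, and the fixed point $\phi < 0$ satisfies $\widetilde{H}'(\phi) > \widetilde{H}'(0) \ge 1$. If instead $\widetilde{H}'(0) < 1$, then by strict monotonicity of $\widetilde{H}'$ there is a unique $\varphi^* \in \widetilde{\mathbb{F}}_\theta^-$ with $\widetilde{H}'(\varphi^*) = 1$; the function $g$ is strictly increasing on $(\theta-\pi/2, \varphi^*)$ and strictly decreasing on $(\varphi^*, 0)$, attaining its maximum at $\varphi^*$. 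Since $g(\theta-\pi/2) < 0$ and $g(\varphi^*) > g(0) > 0$, the unique zero $\phi$ of $g$ lies in $(\theta-\pi/2, \varphi^*)$, where $\widetilde{H}' > 1$, so $\phi$ is repelling.

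The main obstacle is the second case: a priori one might fear another zero of $g$ on the decreasing branch $(\varphi^*, 0)$. This is exactly what the positivity of $g(0)$ rules out, since strict monotonicity of $g$ on $(\varphi^*, 0)$ combined with both endpoint values being strictly positive forces $g > 0$ throughout that sub-interval. Concavity of $\widetilde{H}$ on $\widetilde{\mathbb{F}}_\theta^-$ is therefore the essential ingredient that upgrades the easy IVT existence statement to uniqueness together with the repelling property.
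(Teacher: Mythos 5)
Your proof is correct, and it reaches the conclusion by a somewhat different, more self-contained route than the paper. The paper obtains existence from the covering relation $\widetilde{Q}_2\subset\widetilde{H}(\widetilde{Q}_2)$ established in the proof of Lemma \ref{s4l1}, and obtains uniqueness and the repelling property by invoking Lemma \ref{s4l3}: the set where $\widetilde{H}'<1$ is a single interval $J$ centred at $\theta>0$, so an invariant interval $[\phi,\phi']\subset\widetilde{\mathbb{F}}^-_\theta$ bounded by two fixed points would have to be disjoint from $J$, hence expanded, a contradiction. You instead work directly with $g=\widetilde{H}(\varphi)-\varphi$ on $(\theta-\pi/2,0)$: the one-sided limit $\theta-\pi/2<0$ at the left endpoint and the value $g(0)=2\theta-2\tan^{-1}(\tan\theta/K)>0$ give existence, and strict concavity of $\widetilde{H}$ on this interval (your formula for $\widetilde{H}''$ is correct, and $\sin(2(\varphi-\theta))<0$ precisely because $\varphi-\theta\in(-\pi/2,-\theta)$ with $\theta\in(0,\pi/2)$) forces $g$ to be increasing and then possibly decreasing; since $g$ remains positive at the right endpoint, its unique zero lies on the increasing branch, where $\widetilde{H}'>1$, so the fixed point is repelling. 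Both arguments ultimately rest on the same sign computation for $\widetilde{H}''$ (your second derivative agrees with the paper's up to an immaterial constant factor), but yours trades the invariant-interval expansion machinery for a one-variable monotonicity analysis, which is cleaner for this lemma in isolation; the paper's packaging has the advantage that the interval $J$ and the expansion/contraction dichotomy are reused in the neighbouring Lemmas \ref{s4l7}--\ref{s4l9}.
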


\begin{proof}
Recalling the notation of Lemma \ref{s4l1},
we know any fixed point in $\widetilde{Q}_2$ must lie in $\widetilde{\mathbb{F}}^-_\theta$. We also have that $\widetilde{Q}_2\subset \widetilde{H}(\widetilde{Q}_2)$. Recall that $\widetilde{H}$ is orientation preserving, injective when restricted to $\widetilde{Q}_2$, and continuous. Hence there must be a fixed point $\phi\in\widetilde{\mathbb{F}}^-_\theta$. We will see that $\phi$ is the only fixed point in $\widetilde{\mathbb{F}}^-_\theta$.

From Lemma~\ref{s4l3} and Definition \ref{jdef}, $J\subset Q_1\cup Q_2$ where $\widetilde{H}'(\varphi)<1$ for $\varphi\in J$ and also $\theta\in J$. If $K\leq 2$ then $J=\emptyset$ or $J=\{\theta\}$ and every interval with end-point $\phi$ is expanded by $\widetilde{H}$. Therefore $\widetilde{H}$ has no other fixed points.

Finally suppose $K>2$ and $J \neq \emptyset$. Then since $\phi<0$ is fixed and $\theta >0$, the interval $[\phi, \theta]$ is expanded by $\widetilde{H}$ and so $\phi \notin J$.
Suppose there is some other fixed point $\phi' \in\widetilde{\mathbb{F}}^-_\theta$. Without loss of generality, assume $\phi<\phi '$. Then the interval
$I=[\phi,\phi ']$
satisfies $\widetilde{H}(I)=I$ and $I\cap J=\emptyset$. Therefore $I$ is expanded by $\widetilde{H}$ which is a contradiction. Therefore there can only be one fixed point in $\widetilde{\mathbb{F}}^-_\theta$.
\end{proof}

In the next step, we will see that given $\theta \in (0,\pi/2)$, we can choose $K$ so that there are exactly two fixed points of $\widetilde{H}$. First we show a uniqueness lemma for neutral fixed points by similar calculations to Lemma \ref{H'0}.

\begin{lemma}\label{s4l001}
Let $\theta \in (0, \pi/2)$. There exists one $K_\theta>2$ such that $\widetilde{H}_{K_{\theta},\theta}$ has a neutral fixed point $\phi_{K_{\theta}}$.
\end{lemma}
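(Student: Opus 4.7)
My strategy is to translate the neutral condition into a one-variable equation. By Lemma~\ref{s4l5}, the unique fixed point in $\widetilde{\mathbb{F}}_\theta^-$ is repelling, so any neutral fixed point must lie in $\widetilde{\mathbb{F}}_\theta^+$. On the interval $(2\theta, \theta + \pi/2)$ set $F_K(\varphi) := \widetilde{H}(\varphi) - \varphi$. Direct evaluation at the endpoints gives $F_K(2\theta) = 2\tan^{-1}(\tan\theta/K) > 0$ and $F_K(\varphi) \to \theta + \pi/2 > 0$ as $\varphi \to (\theta + \pi/2)^-$, so any fixed point in $\widetilde{\mathbb{F}}_\theta^+$ is an interior zero. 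Moreover, by Lemma~\ref{s4l3}, the only critical points of $F_K$ in $(\theta-\pi/2, \theta + \pi/2)$ when $K > 2$ are $\varphi = \theta \pm \eta_K$, where $\cos^2\eta_K = (2K-1)/(K^2-1)$.

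I would then identify the threshold $K^{*} > 2$ defined by $\eta_{K^{*}} = \theta$. When $K \in (2, K^{*}]$ one has $\eta_K \leq \theta$, so $F_K$ has no critical point in $\widetilde{\mathbb{F}}_\theta^+$ and $F_K' \geq 0$ throughout; hence $F_K > 0$ and there are no fixed points at all in $\widetilde{\mathbb{F}}_\theta^+$. For $K > K^{*}$ the unique critical point of $F_K$ in $\widetilde{\mathbb{F}}_\theta^+$ is $\varphi_K := \theta + \eta_K$, and sign analysis of $F_K'$ shows it is a strict minimum. A neutral fixed point must be simultaneously a zero and a critical point of $F_K$, so it has to coincide with $\varphi_K$, and its existence is equivalent to
\[ G(K) := F_K(\varphi_K) = \theta + 2\tan^{-1}\!\left(\frac{\tan\eta_K}{K}\right) - \eta_K = 0. \]

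Finally I would show that $G$ has exactly one zero on $(K^{*}, \infty)$. At the left endpoint, $G(K) \to 2\tan^{-1}(\tan\theta/K^{*}) > 0$ as $K \to (K^{*})^+$, while $\eta_K \to \pi/2$ and $\tan\eta_K / K \to 0$ as $K \to \infty$ give $G(K) \to \theta - \pi/2 < 0$. For monotonicity I would differentiate implicitly using $\sec^2\eta_K = (K^2-1)/(2K-1)$ and $\tan^2\eta_K = K(K-2)/(2K-1)$ to obtain
\[ \eta_K' = \frac{K^2 - K + 1}{(K^2-1)\sqrt{K(K-2)(2K-1)}}, \]
and an analogous but longer computation for the $\tan^{-1}$ term. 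The cancellations collapse everything to
\[ G'(K) = \frac{-(2K-1)(K-2)}{(K^2-1)\sqrt{K(K-2)(2K-1)}}, \]
which is strictly negative on $(2, \infty)$, in particular on $(K^{*}, \infty)$. Together with the sign information at the endpoints this yields a unique $K_\theta > K^{*} > 2$ with $G(K_\theta) = 0$, proving the lemma. The main obstacle is the bookkeeping for the derivative: the formulas for $\eta_K$, $\tan\eta_K$, and $\sec^2\eta_K$ must be combined carefully, in the same algebraic spirit as Lemma~\ref{H'0}, before the answer reduces to the elementary quadratic above.
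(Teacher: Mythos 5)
Your proposal is correct, and I checked the one computation you left as a sketch: with $u_K=\tan\eta_K/K=\bigl((K-2)/(K(2K-1))\bigr)^{1/2}$ one gets $\frac{d}{dK}\,2\tan^{-1}(u_K)=-(K^2-4K+1)\big/\bigl((K^2-1)\sqrt{K(K-2)(2K-1)}\bigr)$, which combined with your (correct) formula for $\eta_K'$ does collapse to $G'(K)=-(2K-1)(K-2)\big/\bigl((K^2-1)\sqrt{K(K-2)(2K-1)}\bigr)<0$ on $(2,\infty)$. Your route differs from the paper's in the uniqueness step. The paper eliminates $\phi$ from the system $\widetilde{H}'(\phi)=1$, $\widetilde{H}(\phi)=\phi$ by a chain of trigonometric identities (involving two squarings and the discarding of non-permissible roots) to arrive at the closed form $\theta=\cos^{-1}\bigl[\bigl(\tfrac{2K-1}{K^2-1}\bigr)^{3/2}(K-1)\bigr]$, and then shows the right-hand side is a decreasing bijection of $(2,\infty)$ onto $(0,\pi/2)$; existence of the neutral fixed point itself is only established afterwards, by an intermediate value argument in Lemma~\ref{s4l7} applied to precisely your function $G(K)=\widetilde{H}_K(\varphi_K^+)-\varphi_K^+$. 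You instead identify the unique candidate $\varphi_K=\theta+\eta_K$ up front (using Lemmas~\ref{s4l1}, \ref{s4l3} and \ref{s4l5} to exclude $\theta-\eta_K$), and get existence and uniqueness simultaneously from the strict monotonicity of $G$ together with its endpoint signs. What your approach buys is a cleaner logical structure: no squaring steps that could introduce spurious solutions, and no need to verify separately that the solution of the eliminated equation really corresponds to a neutral fixed point. What it costs is that the whole burden falls on the derivative computation, which must be carried out in full (the identities $\cos^2\eta_K=(2K-1)/(K^2-1)$, $\tan^2\eta_K=K(K-2)/(2K-1)$ and $1+u_K^2=2(K^2-1)/(K(2K-1))$ are the ones needed); since that computation verifies, the argument is complete.
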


\begin{proof}
Note that \eqref{diff} implies that if $\widetilde{H}_{K,\theta}'(\phi_{K_\theta})=1$ then
\[\phi_{K_\theta}=\cos^{-1}\left[\left(\frac{2K-1}{K^2-1}\right)^{\frac{1}{2}}\right]+\theta.\]
From \eqref{polar} and the assumption $\phi_{K_\theta}$ is fixed we must have
\begin{equation}\label{s4e111}
\cos^{-1}\left[\left(\frac{2K-1}{K^2-1}\right)^{\frac{1}{2}}\right]+\theta=2\tan^{-1}\left[\tan\left(\cos^{-1}\left[\left(\frac{2K-1}{K^2-1}\right)^{\frac{1}{2}}\right]\right)/K\right]+2\theta.
\end{equation}
Using the formula $\tan(\cos^{-1} X)=(1-X^2)^{\frac{1}{2}}/X$, rearranging, simplifying and applying $\tan$ to both sides; \eqref{s4e111} becomes
\begin{equation}\label{s4e112}
\tan\left[\left(\cos^{-1}\left[\left(\frac{2K-1}{K^2-1}\right)^{\frac{1}{2}}\right]-\theta\right)/2\right]=\left(\frac{K-2}{K(2K-1)}\right)^{\frac{1}{2}}.
\end{equation}
Applying the identity $\tan^2 x/2 = (1-\cos x )/(1+\cos x)$ and squaring both sides we see \eqref{s4e112} is equivalent to
\[\frac{1-\cos[\cos^{-1}([(2K-1)/(K^2-1)]^{\frac{1}{2}})-\theta]}{1+\cos[\cos^{-1}([(2K-1)/(K^2-1)]^{\frac{1}{2}})-\theta]}=\frac{K-2}{K(2K-1)}.\]
Applying the addition formula for $\cos$, the formula $\cos x=(1-\sin^2 x)^{\frac{1}{2}}$ and factoring out solutions of $K$ that are not permissible, we see that
\[(K^2-K+1)=(K^2-1)^{\frac{1}{2}}((2K-1)^{\frac{1}{2}}\cos\theta+K^{\frac{1}{2}}(K-2)^{\frac{1}{2}}\sin\theta)\]
We can rearrange and square to see that
\[((K^2-K+1)-(K^2-1)^{\frac{1}{2}}(2K-1)^{\frac{1}{2}}\cos\theta)^2 = K(K-2)(K^2-1)(1-\cos^2\theta).\]
Expanding and solving the quadratic in $\cos \theta$, we obtain
\begin{equation}\label{s4e114}
\theta=\cos^{-1}\left[\left(\frac{2K-1}{K^2-1}\right)^{\frac{3}{2}}(K-1)\right],
\end{equation}
since $\theta>0$.
Writing \eqref{s4e114} as $\theta=\cos^{-1}[f(K)]$, and viewing $f$ as a function $(2,\infty) \to \R$, one can calculate that
\[ f'(K) = -(2K-1)^{1/2}(K-1)^{-3/2}(K+1)^{-3/2}(K+4) <0\]
for $K>2$.
Hence $f$ is a decreasing function which converges to $0$ as $K \to \infty$. Hence $\cos^{-1}\circ f:(2,\infty)\to\R$ is an increasing function which is therefore injective. Further $f(2)=1$ and $f(K)>0$, hence
\[\cos^{-1}\circ f:(2,\infty)\to(0,\pi/2)\]
is bijective. By observing that given $\theta \in (0,\pi/2)$ we can find exactly one $K_{\theta}>2$ satisfying \eqref{s4e114}, this completes the proof.
\end{proof}

We next show that for this value $K_{\theta}$, the corresponding $\widetilde{H}$ has only the two fixed points constructed thus far.

\begin{lemma}\label{s4l7}
Let $\theta \in (0,\pi/2)$ and let $K=K_{\theta}$.
Then $\widetilde{H}$ has two fixed points, one of which is the neutral fixed point of Lemma \ref{s4l001}, $\phi_{K_\theta}\in\widetilde{\mathbb{F}}^+_\theta$, and one of which is the repelling fixed point $\phi\in\widetilde{\mathbb{F}}^-_\theta$.
\end{lemma}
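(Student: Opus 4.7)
My plan is to pin down the precise location of the neutral fixed point $\phi_{K_\theta}$ from Lemma~\ref{s4l001} as the right endpoint of the interval $J$ of Lemma~\ref{s4l3}, and then to use the expansion/contraction dichotomy on either side of this point to exclude any further fixed point of $\widetilde{H}$ in $\widetilde{\mathbb{F}}^+_\theta$. Lemma~\ref{s4l5} already supplies the unique (repelling) fixed point in $\widetilde{\mathbb{F}}^-_\theta$, and by Lemma~\ref{s4l1} these two sectors exhaust the possibilities, so this will force exactly two fixed points overall.

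First I would revisit the derivation in Lemma~\ref{s4l001}: the requirement $\widetilde{H}'(\phi_{K_\theta}) = 1$, together with the positive square root convention used there, forces $\phi_{K_\theta} = \theta + \eta_{K_\theta}$, i.e. the right endpoint of $J = (\theta - \eta_{K_\theta}, \theta + \eta_{K_\theta})$. To place this point in $\widetilde{\mathbb{F}}^+_\theta = (2\theta, \theta + \pi/2)$ I would either appeal abstractly to Lemmas~\ref{s4l1} and~\ref{s4l5} (the neutral point is distinct from the repelling fixed point in $\widetilde{\mathbb{F}}^-_\theta$, and those two sectors are the only options), or argue directly from \eqref{s4e114} that $\eta_{K_\theta} \in (\theta, \pi/2)$; the inequality $\eta_{K_\theta} > \theta$ reduces to $(2K_\theta - 1)/(K_\theta + 1) > 1$, which is automatic since $K_\theta > 2$.

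For uniqueness inside $\widetilde{\mathbb{F}}^+_\theta$, suppose for contradiction there were a second fixed point $\phi' \in \widetilde{\mathbb{F}}^+_\theta$ with $\phi' \neq \phi_{K_\theta}$. Because $\widetilde{H}$ is orientation-preserving, the interval $I$ with endpoints $\phi_{K_\theta}$ and $\phi'$ satisfies $\widetilde{H}(I) = I$, so $|\widetilde{H}(I)| = |I|$. If $\phi' > \phi_{K_\theta}$, then $I \subset [\theta + \eta_{K_\theta}, \theta + \pi/2)$ is disjoint from $J$, so Lemma~\ref{s4l3} yields $\widetilde{H}' > 1$ on the interior of $I$, whence $|\widetilde{H}(I)| > |I|$, a contradiction. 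If instead $\phi' < \phi_{K_\theta}$, then $2\theta < \phi' < \phi_{K_\theta}$, and since $\theta - \eta_{K_\theta} < 0 < 2\theta$ we have $I \subset \overline{J}$; Lemma~\ref{s4l3} now gives $\widetilde{H}' < 1$ strictly on $(\phi', \phi_{K_\theta})$, whence $|\widetilde{H}(I)| < |I|$, again a contradiction. Combined with Lemma~\ref{s4l5}, this delivers exactly the two fixed points claimed.

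The only real (and quite minor) obstacle is the bookkeeping needed to confirm that $\phi_{K_\theta}$ is the right-hand endpoint of $J$, not the left-hand one, and that $2\theta$ lies in $\overline{J}$; both reduce to the single inequality $\eta_{K_\theta} > \theta$, which is inherited from $K_\theta > 2$.
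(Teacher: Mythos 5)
Your argument is correct and, for the heart of the lemma --- ruling out a second fixed point in $\widetilde{\mathbb{F}}^+_\theta$ --- it coincides with the paper's: both identify $\phi_{K_\theta}$ with the right endpoint $\theta+\eta_{K_\theta}$ of $J$ and then observe that any interval in $\widetilde{\mathbb{F}}^+_\theta$ with one endpoint there has interior either inside $J$ (so it is contracted) or outside $\overline{J}$ (so it is expanded), which is incompatible with the other endpoint being fixed. Your explicit check that $\eta_{K_\theta}>\theta$ is equivalent to $(2K_\theta-1)/(K_\theta+1)>1$, hence to $K_\theta>2$, is a nice addition that the paper leaves implicit (it places $\phi_{K_\theta}$ in $\widetilde{\mathbb{F}}^+_\theta$ essentially by appeal to Lemmas \ref{s4l1} and \ref{s4l5}). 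The one substantive difference is where the existence of the neutral fixed point is established. You import it wholesale from the statement of Lemma \ref{s4l001}; the paper instead re-proves it inside this lemma by an intermediate value argument in $K$: writing $\varphi_K^+=\theta+\eta_K$, it shows $\widetilde{H}_K(\varphi_K^+)>\varphi_K^+$ for $K$ slightly above $2$ and $\widetilde{H}_K(\varphi_K^+)<\varphi_K^+$ for $K$ large, so some $K_\theta$ makes $\varphi_{K_\theta}^+$ fixed, and Lemma \ref{s4l001} is then invoked only to guarantee uniqueness of that value. This is not an idle redundancy: the proof of Lemma \ref{s4l001} derives \eqref{s4e114} as a necessary condition through several non-reversible squarings, so strictly speaking it pins down the only candidate $K_\theta$ rather than certifying that a neutral fixed point actually occurs there. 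If you take the literal statement of Lemma \ref{s4l001} as given, your proof is complete; to make the chain of lemmas self-contained, you should either supply the intermediate value step or verify reversibility of the algebra behind \eqref{s4e114}.
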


\begin{proof}
Fix $\theta \in (0,\pi/2)$.
First, by Lemma \ref{s4l5}, there is always exactly one repelling fixed point of $\widetilde{H}$ in $\widetilde{\mathbb{F}}^-_\theta$, and so any remaining fixed points will lie in $\widetilde{\mathbb{F}}^+_\theta$.
We know from Lemma \ref{s4l001} that for a neutral fixed point we require $K>2$ for any $\theta \in (0,\pi/2)$. Recall the interval $J=J_K = (\theta - \eta_{K}, \theta + \eta_{K})$ from Definition \ref{jdef}, which is non-empty for $K>2$. Consider the subinterval
\[J^+_K=(\theta,\theta + \eta_K).\]
Writing $\varphi_K^+ = \theta + \eta_K$ and $\widetilde{H}_K$ to emphasize dependence on $K$, we note that $\widetilde{H}_K'(\varphi_K^+)=1$.
We want to show that some value $K=K_{\theta}$ will give us the neutral fixed point $\varphi_{K_{\theta}}^+$.

First, by \eqref{diff}, we have
\[\varphi_K^+=\cos^{-1}\left[\left(\frac{2K-1}{K^2-1}\right)^{\frac{1}{2}}\right]+\theta.\]
Next, \eqref{polar} implies that
\[\widetilde{H}_K(\varphi_K^+)=2\tan^{-1}\left(\left[\tan\left(\cos^{-1}\left[\left(\frac{2K-1}{K^2-1}\right)^{\frac{1}{2}}\right]\right)\right]/K\right)+2\theta.\]
Using the formula $\tan(\cos^{-1} x)=(1-x^2)^{1/2}/x$ and simplifying, we see that
\[\widetilde{H}_K(\varphi_K^+)=2\tan^{-1}\left[\left(\frac{K-2}{K(2K-1)}\right)^{\frac{1}{2}}\right]+2\theta.\]
For $K$ just above $2$, we see that $\widetilde{H}_K(\varphi_K^+) \approx 2\theta > \varphi_K^+$. Now as $K \to \infty$, we have both
$\varphi_K^+ \to \pi/2 + \theta$ and $\widetilde{H}_K(\varphi_K^+) \to 2 \theta$. Since $\theta < \pi/2$, we have $\widetilde{H}_K(\varphi_K^+) < \varphi_K^+$ for all large enough $K$.

By continuity, there exists some $K_{\theta}$ such that $\widetilde{H}_{K_{\theta}}(\varphi_{K_{\theta}}^+)~=~\varphi_{K_{\theta}}^+$.
By construction, $\widetilde{H}_{K_\theta}'(\varphi_{K_\theta}^+)=1$ and hence it is a neutral fixed point. By Lemma~\ref{s4l001} we know it is the only neutral fixed point for our given $\theta$. We take $\phi_{K_{\theta}}$ to be $\varphi_{K_{\theta}}^+$.

To see this is the only fixed point in $\widetilde{\mathbb{F}}^+_{\theta}$, consider any interval contained in $\widetilde{\mathbb{F}}^+_{\theta}$ with one endpoint at $\phi_{K_\theta}^+$. Then the interior of the interval is either contained in $J$ and the interval is contracted, or it is contained in the complement of $J$ and the interval is expanded. In either case, the other endpoint of the interval cannot be a fixed point.
\end{proof}

For $K<K_{\theta}$, the next lemma shows that we only have one fixed point.

\begin{lemma}\label{s4l8}
Let $\theta \in (0,\pi/2)$.
For $K<K_\theta$ there exists only one fixed point $\phi$ of $\widetilde{H}$. Further $\phi\in\widetilde{\mathbb{F}}^-_\theta$ and $\phi$ is repelling.
\end{lemma}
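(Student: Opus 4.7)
By Lemma \ref{s4l5}, for every $K>1$ there is a unique fixed point $\phi\in\widetilde{\mathbb{F}}^-_\theta$ of $\widetilde{H}$, and it is repelling. By Lemma \ref{s4l1}, any further fixed point must lie in $\widetilde{\mathbb{F}}^+_\theta=(2\theta,\theta+\pi/2)$, so the task reduces to showing that $\widetilde{H}$ has no fixed point in this interval when $K<K_\theta$. The natural approach is to study the sign of
\[ g(\varphi):=\widetilde{H}(\varphi)-\varphi \]
on $\widetilde{\mathbb{F}}^+_\theta$, noting that $g'(\varphi)=\widetilde{H}'(\varphi)-1$, so the critical points and sign of $g'$ are controlled by Lemma \ref{s4l3} via the interval $J=J_K=(\theta-\eta_K,\theta+\eta_K)$ on which $\widetilde{H}'<1$.

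First I would handle the endpoint $\varphi=2\theta$. By \eqref{polar},
\[ g(2\theta)=2\tan^{-1}\!\left(\frac{\tan\theta}{K}\right)>0, \]
since $\theta\in(0,\pi/2)$ and $K>1$. In the subcase $K\leq 2$, Lemma \ref{s4l3} gives $\widetilde{H}'\geq 1$ everywhere (strictly greater except possibly at $\theta$, which is not in $\widetilde{\mathbb{F}}^+_\theta$ because $\theta<2\theta$). Hence $g$ is non-decreasing on $\widetilde{\mathbb{F}}^+_\theta$, and so $g\geq g(2\theta)>0$ throughout, ruling out any fixed point.

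For the remaining subcase $K\in(2,K_\theta)$, the interval $J$ is nonempty and centred at $\theta$. If $\eta_K\leq\theta$ then $J\cap\widetilde{\mathbb{F}}^+_\theta=\emptyset$ and the previous argument applies verbatim. Otherwise $J\cap\widetilde{\mathbb{F}}^+_\theta=(2\theta,\varphi_K^+)$ where $\varphi_K^+=\theta+\eta_K$, and the sign analysis of $g'$ shows that $g$ is strictly decreasing on $(2\theta,\varphi_K^+)$ and strictly increasing on $(\varphi_K^+,\theta+\pi/2)$. Hence the minimum of $g$ on $\widetilde{\mathbb{F}}^+_\theta$ is attained at $\varphi_K^+$, with value $g(\varphi_K^+)=\widetilde{H}_K(\varphi_K^+)-\varphi_K^+$. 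The key step is then to show this minimum is strictly positive for $K<K_\theta$.

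The main obstacle is justifying the monotonicity/sign of $K\mapsto g(\varphi_K^+)$, and this is where I would invoke Lemmas \ref{s4l7} and \ref{s4l001}. From the explicit formulas in the proof of Lemma \ref{s4l7},
\[ g(\varphi_K^+)=2\tan^{-1}\!\left[\left(\frac{K-2}{K(2K-1)}\right)^{1/2}\right]+2\theta-\cos^{-1}\!\left[\left(\frac{2K-1}{K^2-1}\right)^{1/2}\right]-\theta, \]
which is continuous in $K$ on $(2,\infty)$ and tends to $\theta>0$ as $K\to 2^+$. By Lemma \ref{s4l001}, there is exactly one value of $K$ in $(2,\infty)$ for which $\widetilde{H}$ admits a neutral fixed point; since $\widetilde{H}'(\varphi_K^+)=1$ by the definition of $\varphi_K^+$, the equation $g(\varphi_K^+)=0$ characterizes that unique value, namely $K=K_\theta$. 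Continuity together with the positive limit at $K=2$ then forces $g(\varphi_K^+)>0$ for all $K\in(2,K_\theta)$. Combining this with the sign analysis above gives $g>0$ on $\widetilde{\mathbb{F}}^+_\theta$, completing the proof that $\phi$ is the only fixed point of $\widetilde{H}$.
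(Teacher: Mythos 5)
Your proposal is correct and follows essentially the same route as the paper: reduce to $\widetilde{\mathbb{F}}^+_\theta$ via Lemmas \ref{s4l5} and \ref{s4l1}, use the monotonicity structure from Lemma \ref{s4l3} to locate the potential minimum of $\widetilde{H}(\varphi)-\varphi$ at $\varphi_K^+$, and conclude from $\widetilde{H}_K(\varphi_K^+)>\varphi_K^+$ for $K<K_\theta$. You are in fact somewhat more explicit than the paper, which phrases the final step as an expansion/contraction contradiction and simply asserts $\widetilde{H}_K(\varphi_K^+)>\varphi_K^+$, whereas you justify that inequality via continuity in $K$ and the uniqueness in Lemma \ref{s4l001}, and you treat the $K\leq 2$ subcase separately.
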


\begin{proof}
From Lemma \ref{s4l5} we know there must be exactly one repelling fixed point in $\widetilde{\mathbb{F}}^-_\theta$. By Lemma~\ref{s4l1}, we are left to show there are no fixed points in $\widetilde{\mathbb{F}}^+_\theta$.

Using the notation of the previous lemma, when $K<K_\theta$ we know that $\widetilde{H}_K(\varphi_K^+)>\varphi_K^+$.
Suppose we have a fixed point $\xi>0$. Then if $\xi<\varphi_K^+$, the interval $I=(\xi, \varphi_K^+)$ is contained in $J$ and $|\widetilde{H}(I)|<|I|$. However, $\widetilde{H}_K(\varphi_K^+)>\varphi_K^+$ which gives a contradiction.
On the other hand, suppose that $\xi > \varphi_K^+$. Then the interval $I'=(\varphi_K^+, \xi)$ similarly satisfies $|\widetilde{H}(I')|>|I'|$. Again the fact that $\widetilde{H}_K(\varphi_K^+)>\varphi_K^+$ gives a contradiction.
\end{proof}

For $K>K_\theta$ we get three fixed points.

\begin{lemma}\label{s4l9}
Let $\theta \in (0,\pi/2)$.
For $K>K_\theta$ there exist three fixed points of $\widetilde{H}$. There are fixed points $\phi_0, \phi_1$ and $\phi_2$ such that $\phi_2<\phi_0<\phi_1$, $\phi_1$ and $\phi_2$ are repelling and $\phi_0$ is attracting. Further we have $\phi_1,\phi_0\in\widetilde{\mathbb{F}}^+_\theta$ and $\phi_2\in\widetilde{\mathbb{F}}^-_\theta$.
\end{lemma}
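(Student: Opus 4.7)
My plan is as follows. Lemma~\ref{s4l5} already provides a unique repelling fixed point $\phi_2 \in \widetilde{\mathbb{F}}^-_\theta$, and Lemma~\ref{s4l1} confines every other fixed point to $\widetilde{\mathbb{F}}^+_\theta = (2\theta,\theta+\pi/2)$. It therefore suffices to produce exactly two fixed points in $\widetilde{\mathbb{F}}^+_\theta$, one attracting and one repelling, and to verify there are no others.

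First I would check that for $K > K_\theta$ the point $\varphi_K^+ = \theta + \eta_K$ of Definition~\ref{jdef} sits inside $\widetilde{\mathbb{F}}^+_\theta$, i.e.\ $\eta_K > \theta$. At $K = K_\theta$ the neutral fixed point $\phi_{K_\theta} = \varphi_{K_\theta}^+$ already lies in $\widetilde{\mathbb{F}}^+_\theta$ by Lemma~\ref{s4l7}, so $\eta_{K_\theta} > \theta$; and the closed form $\eta_K = \cos^{-1}\sqrt{(2K-1)/(K^2-1)}$ is strictly increasing in $K$ (a short derivative check), so the inequality persists for all $K > K_\theta$. The crux is then the claim that $\widetilde H_K(\varphi_K^+) < \varphi_K^+$ for every $K > K_\theta$. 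Setting
\[
F(K) := \widetilde H_K(\varphi_K^+) - \varphi_K^+ = 2\tan^{-1}\sqrt{(K-2)/(K(2K-1))} + \theta - \eta_K,
\]
with the closed-form expression for $\widetilde H_K(\varphi_K^+)$ derived in the proof of Lemma~\ref{s4l7}, one sees that $F$ is continuous on $(2,\infty)$, that $F(K_\theta) = 0$ is its only zero by the uniqueness statement in Lemma~\ref{s4l001}, and that $F(K) \to \theta - \pi/2 < 0$ as $K \to \infty$ (since the arctangent term vanishes and $\eta_K \to \pi/2$). A connectedness argument then forces $F(K) < 0$ for every $K > K_\theta$.

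With that inequality secured, let $g(\varphi) := \widetilde H(\varphi) - \varphi$. Formula~\eqref{polar} gives $g(2\theta) = 2\tan^{-1}(\tan\theta/K) > 0$, while $g(\varphi_K^+) = F(K) < 0$, and $g(\varphi) \to \theta + \pi/2 > 0$ as $\varphi \to (\theta+\pi/2)^-$. Two applications of the intermediate value theorem supply fixed points
\[
\phi_0 \in (2\theta, \varphi_K^+) \subset J, \qquad \phi_1 \in (\varphi_K^+, \theta + \pi/2).
\]
By Lemma~\ref{s4l3}, $\widetilde H'(\phi_0) < 1$ since $\phi_0 \in J$, so $\phi_0$ is attracting; and $\widetilde H'(\phi_1) > 1$ since $\phi_1$ lies in $\widetilde{\mathbb{F}}^+_\theta \setminus J$, so $\phi_1$ is repelling.

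For uniqueness in $\widetilde{\mathbb{F}}^+_\theta$ I would repeat the contraction/expansion argument of Lemma~\ref{s4l7}: two fixed points inside $(2\theta,\varphi_K^+) \subset J$ would bound a closed subinterval mapped onto itself yet strictly contracted by $\widetilde H$, a contradiction; two fixed points inside $(\varphi_K^+,\theta+\pi/2)$ would similarly contradict expansion on the complement of $J$. Together with $\phi_2$, this exhausts the fixed points and yields the ordering $\phi_2 < \phi_0 < \phi_1$ since $\phi_2 < 0 < 2\theta < \phi_0 < \phi_1$. The step I expect to demand the most care is fixing the sign of $F(K)$, because it leans on the uniqueness of $K_\theta$ from Lemma~\ref{s4l001} rather than on a direct monotonicity computation for $F$.
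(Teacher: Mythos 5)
Your proof is correct and follows essentially the same route as the paper: both obtain $\phi_0$ and $\phi_1$ by the intermediate value theorem on either side of the critical point $\varphi_K^+$, using the sign $\widetilde{H}_K(\varphi_K^+)<\varphi_K^+$ for $K>K_\theta$ (which the paper likewise extracts from the continuity argument of Lemma~\ref{s4l7} combined with the uniqueness in Lemma~\ref{s4l001}), and both classify the fixed points by membership in $J$. The only cosmetic difference is that the paper bounds the number of fixed points by three via the cubic $P$ of \eqref{cubic}, whereas you reuse the expansion/contraction argument; both are valid, and your version is if anything more explicit about why $\widetilde{H}_K(\varphi_K^+)<\varphi_K^+$ holds for every $K>K_\theta$.
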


\begin{proof}
From Lemma \ref{s4l5} we know there must be exactly one repelling fixed point $\phi_2 \in \widetilde{\mathbb{F}}^-_\theta$. We are left to show there are two fixed points in $\widetilde{\mathbb{F}}^+_\theta$ by Lemma~\ref{s4l1}.

By the methods of Lemma \ref{s4l7}, we see that for
$K>K_\theta$ we have $\widetilde{H}(\phi_{K_\theta})<\phi_{K_\theta}$. Since $\widetilde{H}(\theta) = 2\theta > \theta$, by continuity there exists
some $\phi_0\in(\theta,\phi_{K_\theta})$ which is fixed by $\widetilde{H} = \widetilde{H}_K$.
Similarly as $\widetilde{H}(\pi/2+\theta)=\pi+2\theta>\pi/2+\theta$, there exists $\phi_1\in(\phi_{K_\theta},\pi/2+\theta)$ that is fixed by $\widetilde{H}$. Hence for $K>K_\theta$ we have three fixed points. Note that we can have at most three fixed points since the fixed points of $\widetilde{H}$ correspond to roots of the cubic $P$ given in \eqref{cubic}.

Finally we have that $\phi_0\in J$ and $\phi_1,\phi_2\notin J$ by construction, and so $\phi_0$ is attracting and $\phi_1,\phi_2$ are repelling.
\end{proof}

\begin{figure}[h]
\begin{center}
\input{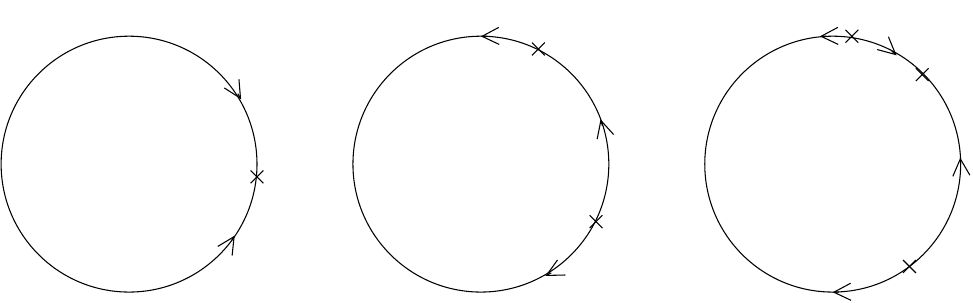_t}
\caption{Diagram showing the local dynamics of one, two and three fixed points.}\label{s4f6}
\end{center}
\end{figure}

The preceding lemmas prove Theorem~\ref{s2t1}.

\section{Pre-images of fixed rays and basins of attraction}

In this section we will prove Theorem \ref{s2t2} by studying the backward orbits of the fixed points of $\widetilde{H}$ and any basins of attraction. We will see that $\widetilde{H}$ restricted to $S^1$ is actually a Blaschke product. We take advantage of this fact, and use properties of Julia sets and Fatou sets of rational functions. In view of Lemma \ref{Upsilon}, throughout this section we assume that $\theta \in [0,\pi/2]$.

\subsection{Basin of attraction}
In the case where $\widetilde{H}$ has two or three fixed points on $S^1$, we will see that the non-repelling fixed point has a basin of attraction. When $\widetilde{H}$ has three fixed points, the basin is formed by a union of open intervals, whereas when $\widetilde{H}$ has two fixed points, the basin is formed by a union of half-open intervals.

\begin{definition}\label{Basin}
The \emph{basin of attraction} $\widetilde{\Lambda}$ of a non-repelling fixed point $\phi$ of $\widetilde{H}$ is given by
\[\widetilde{\Lambda}:=\{\varphi\in S^1\;|\;\widetilde{H}^n(\varphi)\to\phi \mbox{ as } n\to\infty\}.\]
The \emph{immediate basin of attraction} $\widetilde{\Lambda}^*$ is the component of $\widetilde{\Lambda}$ containing $\phi$.
\end{definition}

Recall that we use a tilde to denote sets in $S^1$ and that the basin of attraction of the non-repelling fixed ray $R_{\phi}$ of $H$ in $\C$ will be $\Lambda=\{R_\varphi\;|\;\varphi\in\widetilde{\Lambda}\}$.

\begin{lemma}\label{s4l10}
Recalling the notation of Lemma \ref{s4l001},
suppose $\widetilde{H}$ has two fixed points. Then the neutral fixed point $\phi_{K_{\theta}}$ has an immediate basin of attraction $\widetilde{\Lambda}^{*}$ that is the interval bounded by $\phi_{K_{\theta}}$ and the repelling fixed point $\phi \in \widetilde{\mathbb{F}}_{\theta}^-$.
\end{lemma}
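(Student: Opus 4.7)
The plan is to show $\widetilde{\Lambda}^{*}=I\cup\{\phi_{K_\theta}\}$, where $I$ denotes the open arc in $S^1$ from $\phi$ to $\phi_{K_\theta}$ passing through $\theta$. One containment comes from a monotone-convergence argument inside $I$; the other is enforced at $\phi$ because $\phi$ is itself a fixed point, and at $\phi_{K_\theta}$ because the parabolic fixed point is repelling on its long-arc side.

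For $I\cup\{\phi_{K_\theta}\}\subseteq\widetilde{\Lambda}^{*}$: since $I\subset(\theta-\pi/2,\theta+\pi/2)$, equation \eqref{polar} shows that $\widetilde{H}$ is strictly increasing on $I$, and since $\widetilde{H}(\phi)=\phi$ and $\widetilde{H}(\phi_{K_\theta})=\phi_{K_\theta}$ the arc $I$ is forward invariant. The continuous function $g(\varphi):=\widetilde{H}(\varphi)-\varphi$ vanishes in $\overline{I}$ only at the two endpoints (Lemma~\ref{s4l7}), and Lemma~\ref{s4l5} gives $\widetilde{H}'(\phi)>1$, so $g>0$ just to the right of $\phi$; the intermediate value theorem then forces $g>0$ throughout $I$. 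For any $\varphi\in I$ the lifted orbit $\widetilde{H}^{n}(\varphi)$ is therefore strictly increasing and bounded above by $\phi_{K_\theta}$, so by continuity it converges to a fixed point, which cannot be $\phi$ and hence must be $\phi_{K_\theta}$. Since $I\cup\{\phi_{K_\theta}\}$ is a connected subset of $\widetilde{\Lambda}$ containing $\phi_{K_\theta}$, it lies in $\widetilde{\Lambda}^{*}$.

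For the reverse containment, $\phi\neq\phi_{K_\theta}$ is itself a fixed point, so $\phi\notin\widetilde{\Lambda}$, which prevents $\widetilde{\Lambda}^{*}$ from extending past $\phi$. For the other endpoint I invoke the parabolic nature of $\phi_{K_\theta}$. Differentiating the formula in the proof of Lemma~\ref{s4l3}, at $\varphi=\phi_{K_\theta}=\theta+\eta_{K_\theta}$ we have $\tan(\phi_{K_\theta}-\theta)=\tan(\eta_{K_\theta})>0$ and, since $K>2$, also $2K^{3}-2K>0$, so $\widetilde{H}''(\phi_{K_\theta})>0$. Combined with $\widetilde{H}'(\phi_{K_\theta})=1$, Taylor expansion gives
\[
\widetilde{H}(\varphi)-\varphi \;=\; \tfrac{1}{2}\widetilde{H}''(\phi_{K_\theta})(\varphi-\phi_{K_\theta})^{2}+O\bigl((\varphi-\phi_{K_\theta})^{3}\bigr) \;>\; 0
\]
for $\varphi$ slightly greater than $\phi_{K_\theta}$. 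Iterates of such $\varphi$ then form a strictly increasing sequence in the lift that remains on the long-arc side of $\phi_{K_\theta}$ for as long as it stays in a fixed small neighbourhood, so cannot accumulate on $\phi_{K_\theta}$ from that side. Consequently no arc $(\phi_{K_\theta},\phi_{K_\theta}+\delta)$ is contained in $\widetilde{\Lambda}^{*}$, and the identification $\widetilde{\Lambda}^{*}=I\cup\{\phi_{K_\theta}\}$ follows.

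The principal obstacle is this last step. Because $\widetilde{H}\colon S^{1}\to S^{1}$ has degree two, an orbit escaping the parabolic repelling side could a priori wrap around $S^{1}$ and converge to $\phi_{K_\theta}$ from within $I$, so a pointwise escape argument does not by itself rule out such points from $\widetilde{\Lambda}$. The parabolic Taylor analysis above forbids accumulation on $\phi_{K_\theta}$ from the long-arc side; the subtlety will be converting this into the topological statement that the connected component of $\widetilde{\Lambda}$ containing $\phi_{K_\theta}$ has $\phi_{K_\theta}$ as its boundary point on the long-arc side, even though scattered points of $\widetilde{\Lambda}$ may well lie in the long arc.
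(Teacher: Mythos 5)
Your first containment, $(\phi,\phi_{K_\theta}]\subseteq\widetilde{\Lambda}^{*}$, is correct and is essentially a more carefully justified version of what the paper does (the paper appeals to contraction of the intervals $[\varphi,\phi_{K_\theta}]$ for $\phi<\varphi<\phi_{K_\theta}$; your monotone-orbit argument via $g:=\widetilde{H}-\operatorname{id}>0$ on the arc does the same job cleanly). The problem is the reverse containment, and the gap is exactly where you flag it; it is a real gap, not a presentational one. The Taylor expansion at the parabolic point only shows that an orbit starting just to the right of $\phi_{K_\theta}$ initially moves away from it. Because $\widetilde{H}$ has degree two on $S^1$, such an orbit can later land in $(\phi,\phi_{K_\theta})$ and then converge to $\phi_{K_\theta}$ from the left; indeed Theorem \ref{s2t2} shows $\widetilde{\Lambda}$ is dense in $S^1$, so there genuinely are points of $\widetilde{\Lambda}$ arbitrarily close to $\phi_{K_\theta}$ on the long-arc side. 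Hence ``cannot accumulate on $\phi_{K_\theta}$ from that side'' does not imply ``no arc $(\phi_{K_\theta},\phi_{K_\theta}+\delta)$ lies in $\widetilde{\Lambda}$'', and your identification of $\widetilde{\Lambda}^{*}$ is not yet established.

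The missing step is a connectedness-plus-forward-invariance argument, which is what the paper's terse phrase ``intervals of the form $[\phi_{K_\theta},\varphi]$ are expanded'' is encoding. Suppose a nondegenerate arc $[\phi_{K_\theta},\varphi_0]$ on the long-arc side were contained in $\widetilde{\Lambda}$, and work in the lift. On the whole interval $(\phi_{K_\theta},\phi+2\pi)$ the function $g$ never vanishes (a zero would give a third fixed ray, contradicting Lemma \ref{s4l7}), and $g>0$ just to the right of $\phi_{K_\theta}$ by your Taylor computation, so $g>0$ throughout. Thus the lifted orbit $\varphi_n=\widetilde{H}^n(\varphi_0)$ is strictly increasing while it stays in $(\phi_{K_\theta},\phi+2\pi]$ and cannot converge to any point of that set (a finite limit would be a zero of $g$), so $\varphi_N>\phi+2\pi$ for some $N$. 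By the intermediate value theorem applied to the continuous map $\widetilde{H}^N$ on $[\phi_{K_\theta},\varphi_0]$, some $\xi$ in that arc satisfies $\widetilde{H}^N(\xi)=\phi+2\pi$, i.e.\ $\xi$ is an $N$-th preimage of the repelling fixed point, whence $\widetilde{H}^n(\xi)=\phi$ for all $n\geq N$ and $\xi\notin\widetilde{\Lambda}$ --- contradicting the assumption that the whole arc lies in $\widetilde{\Lambda}$. Inserting this closes your proof and brings it into line with the paper's.
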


\begin{proof}
From Lemma \ref{s4l7} we know that intervals of the form $[\phi_{K_\theta},\varphi]$ are expanded and so
\[\widetilde{\Lambda}^{*}=(\psi,\phi_{K_\theta}],\]
for some $\psi$. Since the repelling fixed point $\phi\notin\widetilde{\Lambda}^*$, we have $\psi\geq\phi$. However, from Lemma~\ref{s4l7} we also know that all intervals $[\varphi,\phi_{K_{\theta}}]$ such that
\begin{equation}\label{eqbasin}
\phi<\varphi<\phi_{K_{\theta}}
\end{equation}
are contracted under $\widetilde{H}$. Hence if $\varphi$ satisfies \eqref{eqbasin} then $\varphi\in\widetilde{\Lambda}^*$. Therefore we have
\[\widetilde{\Lambda}^{*}=(\phi,\phi_{K_\theta}].\]
\end{proof}

\begin{lemma}\label{s4l11}
When $\widetilde{H}$ has three fixed points $\phi_2<\phi_0<\phi_1$ as in Lemma~\ref{s4l9}, the attracting fixed point $\phi_0$ has an immediate basin of attraction
\[\widetilde{\Lambda}^{*}=(\phi_2,\phi_1).\]
\end{lemma}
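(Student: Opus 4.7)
The plan is to show both inclusions $\widetilde{\Lambda}^{*}\subseteq(\phi_2,\phi_1)$ and $(\phi_2,\phi_1)\subseteq\widetilde{\Lambda}^{*}$. The first is essentially free: since $\phi_1$ and $\phi_2$ are (repelling) fixed points distinct from $\phi_0$, neither belongs to $\widetilde{\Lambda}$, so the connected component of $\widetilde{\Lambda}$ that contains $\phi_0$ cannot cross them and must be contained in the open interval $(\phi_2,\phi_1)$.

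For the reverse inclusion, the main tool is the monotonicity of $\widetilde{H}$ together with the classification of fixed points from Lemma~\ref{s4l9}. Since $\widetilde{H}$ is sense-preserving and fixes both endpoints of $(\phi_0,\phi_1)$, this interval is $\widetilde{H}$-invariant; the same holds for $(\phi_2,\phi_0)$. I would then argue that $\widetilde{H}(\varphi)<\varphi$ for every $\varphi\in(\phi_0,\phi_1)$: near $\phi_0$ this holds because $\phi_0\in J$ gives $\widetilde{H}'(\phi_0)<1$, and near $\phi_1$ it holds because $\phi_1\notin J$ gives $\widetilde{H}'(\phi_1)>1$; the function $\widetilde{H}(\varphi)-\varphi$ has no zero in the open interval (there are no further fixed points by Lemma~\ref{s4l9}), so by continuity it is negative throughout. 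Symmetrically, $\widetilde{H}(\varphi)>\varphi$ for all $\varphi\in(\phi_2,\phi_0)$.

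Consequently, for any $\varphi\in(\phi_0,\phi_1)$ the forward orbit $\widetilde{H}^n(\varphi)$ is strictly decreasing and bounded below by $\phi_0$, so it converges to a limit $\ell\in[\phi_0,\phi_1)$. By continuity $\ell$ is a fixed point of $\widetilde{H}$, and the only fixed point available in $[\phi_0,\phi_1)$ is $\phi_0$. The analogous argument on $(\phi_2,\phi_0)$ shows orbits there increase to $\phi_0$. Hence $(\phi_2,\phi_1)\subseteq\widetilde{\Lambda}$, and since this interval is connected and contains $\phi_0$, it lies in $\widetilde{\Lambda}^{*}$.

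The only point to watch carefully is the sign of $\widetilde{H}(\varphi)-\varphi$ near the repelling endpoints; this is the step that uses the local expansion result of Lemma~\ref{s4l3} together with the fact that $\phi_1,\phi_2\notin J$ established in Lemma~\ref{s4l9}. Everything else follows from monotonicity and the uniqueness of the fixed points.
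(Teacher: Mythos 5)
Your proof is correct and follows essentially the same route as the paper: the paper phrases the key step as "every interval $[\phi_0,\varphi_1]$ with $\varphi_1<\phi_1$ is contracted", which is exactly your observation that $\widetilde{H}(\varphi)<\varphi$ on $(\phi_0,\phi_1)$ (and symmetrically on $(\phi_2,\phi_0)$), forced by the absence of intermediate fixed points and the local behaviour at $\phi_0$ and $\phi_1,\phi_2$. If anything, your write-up makes explicit the monotone-convergence-to-a-fixed-point step that the paper leaves implicit.
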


\begin{proof}
As $\phi_0$ is an attracting fixed point
\[\widetilde{\Lambda}^{*}=(\varphi_2,\varphi_1),\]
for some $\varphi_2<\phi_0<\varphi_1$. By Lemma~\ref{s4l9} we know that all intervals of the form $[\phi_0,\varphi_1]$ and $[\varphi_2,\phi_0]$, where
\[\phi_2<\varphi_2<\phi_0<\varphi_1<\phi_1,\]
are contracted under $\widetilde{H}$ and so $\varphi_1,\varphi_2\in\widetilde{\Lambda}^*$. Further as $\phi_1,\phi_2\notin\widetilde{\Lambda}^*$ this implies that
\[\widetilde{\Lambda}^{*}=(\phi_2,\phi_1).\]
\end{proof}

\subsection{Writing $\widetilde{H}$ as a Blaschke product}

We will consider the case $\theta=0$, since $h_{K,\theta}$ can be obtained from $h_{K,0}$ by pre-composing and post-composing by the corresponding rotations. Let $h=h_{K,0}$. The induced map $\widetilde{h}$ on $S^1$ cannot be a M\"{o}bius map, however it is $\pi$ periodic and so we can renormalize $\widetilde{h}:(-\pi/2,\pi/2]\to(-\pi/2,\pi/2]$ to a map $\hat{h}:(-\pi,\pi]\to(-\pi,\pi]$ by defining
\begin{equation}\label{s5eh1}
\hat{h}(\varphi)=2\widetilde{h}(\varphi/2).
\end{equation}
This map $\hat{h}$ has an attracting fixed point at $\varphi=0$ and a repelling fixed point at $\varphi=\pi.$

\begin{lemma}\label{s5lm1}
The map $\hat{h}$ agrees with the M\"obius map
\[A_{K}(z) = \frac{z+\alpha}{1+\overline{\alpha} z},\]
on $S^1$, where $\alpha = (K-1)/(K+1)$.
\end{lemma}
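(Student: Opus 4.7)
The plan is to compute both sides explicitly and show they match. First I would evaluate $h=h_{K,0}$ directly on the unit circle to get an explicit formula for $\widetilde{h}$. Since
\[ h(e^{i\varphi}) = \tfrac{K+1}{2}e^{i\varphi} + \tfrac{K-1}{2}e^{-i\varphi} = K\cos\varphi + i\sin\varphi, \]
we have $\widetilde{h}(\varphi) = \arctan(\tan\varphi/K)$ on the interval $(-\pi/2,\pi/2]$. Consequently, by the definition \eqref{s5eh1},
\[ \hat{h}(\varphi) = 2\widetilde{h}(\varphi/2) = 2\arctan\!\left(\tfrac{\tan(\varphi/2)}{K}\right). \]

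Next I would compute $\arg A_K(e^{i\varphi})$ using the half-angle substitution $t = \tan(\varphi/2)$, via the identity
\[ e^{i\varphi} = \frac{1+it}{1-it}. \]
Plugging this into $A_K$ and clearing denominators gives
\[ A_K(e^{i\varphi}) = \frac{(1+it)+\alpha(1-it)}{(1-it)+\alpha(1+it)} = \frac{(1+\alpha)+i(1-\alpha)t}{(1+\alpha)-i(1-\alpha)t} = \frac{1+is}{1-is}, \]
where $s = \frac{1-\alpha}{1+\alpha}\, t$. (Note that $\alpha$ is real, so no complex conjugate issues arise.)

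Then I would compute $\frac{1-\alpha}{1+\alpha}$ explicitly. With $\alpha = (K-1)/(K+1)$, straightforward arithmetic gives $1-\alpha = 2/(K+1)$ and $1+\alpha = 2K/(K+1)$, hence $\frac{1-\alpha}{1+\alpha} = 1/K$. So $s = \tan(\varphi/2)/K$, and since $\frac{1+is}{1-is} = e^{i\psi}$ with $\psi = 2\arctan(s)$, we conclude
\[ \arg A_K(e^{i\varphi}) = 2\arctan\!\left(\tfrac{\tan(\varphi/2)}{K}\right) = \hat{h}(\varphi). \]
Finally, since $|A_K(e^{i\varphi})| = 1$ (as $A_K$ is a disc automorphism), both $A_K|_{S^1}$ and $\hat{h}$ land on $S^1$ with the same argument, so they agree. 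I do not anticipate any real obstacle here beyond the bookkeeping with inverse trigonometric branches; the half-angle substitution reduces the identity to a one-line computation.
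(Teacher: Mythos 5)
Your proposal is correct. It proves the same identity as the paper but by a different computational route. The paper's proof computes $\tan(\hat{h}(\varphi))$ via the double-angle formula, obtaining $\frac{2K\sin\varphi}{(K^2-1)+(K^2+1)\cos\varphi}$, then separately expands $A_K(x+iy)$ into real and imaginary parts on $S^1$ and matches the tangent of the argument against that expression. You instead push the whole computation through the half-angle (Cayley) parametrization $e^{i\varphi}=\frac{1+it}{1-it}$, $t=\tan(\varphi/2)$, under which $A_K$ becomes simply $t\mapsto t/K$; this is exactly the action of $\widetilde{h}$ in the $\tan$ coordinate, so the identity drops out with no trigonometric identities at all. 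Your route is cleaner in two respects: it makes visible that the Cayley transform conjugates $A_K$ to the dilation $t \mapsto t/K$ (the hyperbolic "model form'' that reappears in Lemma \ref{sMl1}), and it compares the actual points on $S^1$ rather than the tangents of their arguments, thereby avoiding the mod-$\pi$ ambiguity implicit in the paper's tangent-matching step. The only point you leave tacit is $\varphi=\pi$, where $t=\tan(\varphi/2)$ blows up; but both $\hat{h}$ and $A_K$ fix $-1\in S^1$ (indeed $A_K(-1)=\frac{-1+\alpha}{1-\alpha}=-1$), so the identity extends there by continuity, and this is a non-issue.
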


\begin{proof}
Recall that the induced map is given by $\widetilde{h}(\varphi)=\tan^{-1}(\tan(\varphi)/K)$ and so
\begin{align*}
\tan(\hat{h}(\varphi))&=\tan(2\tan^{-1}(\tan(\varphi/2)/K))\\
&=\frac{2\tan(\varphi/2)/K}{1-\tan^2(\varphi/2)/K^2}\\
&=\frac{(2K\sin\varphi)/(1+\cos\varphi)}{K^2-(\sin^2\varphi)/(1+\cos(\varphi))^2}\\
&=\frac{2K\sin\varphi}{(K^2-1)+(K^2+1)\cos\varphi}.
\end{align*}

Define $A_{K}$ to be the M\"{o}bius map
\begin{equation}\label{s5e00}
A_{K}(z)=\frac{z+\alpha}{1+\overline{\alpha}z},
\end{equation}
where $\alpha=(K-1)/(K+1)$. By construction $A_{K}(S^1)=S^1$. Further by writing $z=x+iy$ we see
\begin{align*}
A_{K}(x+iy)&=\frac{(x+\alpha+iy)(1+\alpha x-i\alpha y)}{(1+\alpha x+i\alpha y)(1+\alpha x-i\alpha y)}\\
&=\frac{x(1+\alpha^2)+\alpha(1+x^2+y^2)+i y(1-\alpha^2)}{(1+\alpha x+i\alpha y)(1+\alpha x-i\alpha y)}.
\end{align*}
Noticing that $x,y\in S^1$ and so $x^2+y^2=1$ and the fact that
\[\arg(A_{K}(z))=\tan^{-1}[\Im(A_{K,0}(z))/\Re(A_{K,0}(z))],\]
we have
\[\tan[\arg(A_{K}(z))]=\frac{y(1-\alpha^2)}{2\alpha+x(1+\alpha^2)}.\]
It is easy to see that if $\varphi$ denotes the argument of the point $z\in S^1$ and $z=x+iy$, then $x=\cos\varphi$ and $y=\sin\varphi$. Hence using $A_{K}$ to denote the map $A_{K}$ induces on the argument of $z$ we see
\[A_{K}(\varphi)=\frac{2K\sin\varphi}{(K^2-1)+\cos\varphi(K^2+1)}=\hat{h}(\varphi).\]
This shows that $\hat{h}$ is a M\"{o}bius map of $S^1$.
\end{proof}

\begin{figure}[h]
\begin{center}
\input{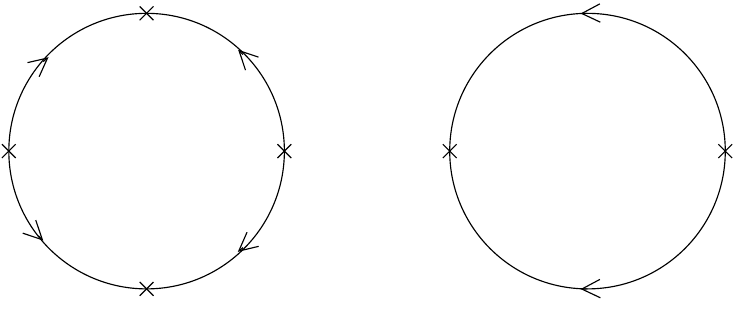_t}
\caption{A diagram for $\theta=0$ showing how we obtain a M\"{o}bius map with two fixed points.}\label{s5f1}
\end{center}
\end{figure}

\begin{lemma}\label{s5l1}
Let $H=H_{K,\theta}$. Then $\widetilde{H}:S^1\to S^1$ agrees with the Blaschke product $B$ on $S^1$ given by
\[ B(z)=\frac{z^2+\mu}{1+\overline{\mu}z^2} =
\left ( \frac{z-a}{1-\overline{a}z} \right )\left( \frac{z+a}{1+\overline{a}z} \right)
,\]
where $\mu=e^{2i\theta} \left ( \frac{K-1}{K+1} \right )$ is the complex dilatation of $H$ and $a = e^{i(\theta - \pi/2)} \left ( \frac{K-1}{K+1} \right )^{1/2}$.
\end{lemma}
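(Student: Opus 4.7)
The plan is to compute $\widetilde{H}$ directly on $S^1$ using the fact that $\bar z = 1/z$ when $|z|=1$, and then match the result to the given Blaschke product.

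First I would set $a = (K+1)/2$ and $b = e^{2i\theta}(K-1)/2$ so that $h(z) = az + b\bar z$ and $\mu = b/a$. For $z \in S^1$, substituting $\bar z = 1/z$ gives
\[ h(z) = \frac{az^2 + b}{z}, \qquad H(z) = h(z)^2 = \frac{(az^2+b)^2}{z^2}. \]
Since $a$ is real, on $S^1$ we have $\overline{az^2+b} = a\bar z^2 + \bar b = (a + \bar b z^2)/z^2$, so
\[ \arg H(z) = 2\arg(az^2+b) - 2\arg z = \arg(az^2+b) - \arg(a + \bar b z^2). \]
Hence $\widetilde{H}(z) = e^{i\arg H(z)}$ coincides with $(az^2+b)/(a+\bar b z^2)$ on $S^1$, which after dividing through by $a$ is exactly $(z^2 + \mu)/(1 + \bar\mu z^2) = B(z)$. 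Along the way I would verify $|B(z)|=1$ for $z\in S^1$ by the same conjugation identity, which confirms $B$ preserves $S^1$ and that the agreement $B = \widetilde{H}$ on $S^1$ makes sense.

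For the factorization, I would observe that the given $a = e^{i(\theta-\pi/2)}\bigl((K-1)/(K+1)\bigr)^{1/2}$ satisfies $a^2 = -\mu$, and consequently $\bar a^2 = -\bar\mu$. Therefore
\[ z^2 + \mu = z^2 - a^2 = (z-a)(z+a), \qquad 1 + \bar\mu z^2 = 1 - \bar a^2 z^2 = (1-\bar a z)(1+\bar a z), \]
which yields the factored form into two Blaschke factors.

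The computation is essentially routine; the only mild obstacle is keeping track of the identity $\overline{az^2+b} = (a+\bar b z^2)/z^2$ on $S^1$, which is what converts the squared modulus in $H$ into a quotient whose denominator and numerator have equal modulus. Once that identity is in hand, the rest is algebra. Nothing in this argument uses the dynamical analysis of earlier sections, so the lemma can be placed as a stand-alone algebraic identification of $\widetilde H$ with $B$, which will then be exploited in conjunction with Proposition~\ref{Blaschke} to analyze the Julia set and the backward orbits of fixed rays.
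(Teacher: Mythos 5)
Your proof is correct, and it takes a genuinely more direct route than the paper's. The paper first renormalizes the induced map $\widetilde{h}$ of the affine stretch to a map $\hat{h}$ of the full circle via $\hat{h}(\varphi)=2\widetilde{h}(\varphi/2)$ and proves a separate lemma (Lemma \ref{s5lm1}, via double-angle and half-angle identities) showing that $\hat{h}$ agrees on $S^1$ with the M\"{o}bius map $A_K(z)=(z+\alpha)/(1+\overline{\alpha}z)$ with $\alpha=(K-1)/(K+1)$; it then handles general $\theta$ by conjugating with rotations, uses $\pi$-periodicity, and substitutes $z=e^{i\varphi}$ to arrive at $B$. You instead exploit $\overline{z}=1/z$ on $S^1$ to get $H(z)=(az^2+b)^2/z^2$ and read off $H(z)/|H(z)|=(az^2+b)/(a+\overline{b}z^2)=(z^2+\mu)/(1+\overline{\mu}z^2)$ in one step, which collapses all the trigonometry into the single identity $\overline{az^2+b}=(a+\overline{b}z^2)/z^2$; your computation is valid since $\widetilde{H}(z)=H(z)/|H(z)|$ for $z\in S^1$ is exactly the paper's definition of the induced map. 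Your route also supplies the check of the factorization into two degree-one Blaschke factors via $a^2=-\mu$, which the paper's proof does not spell out. What the paper's longer route buys is the explicit identification of the degree-one map $\hat{h}$ with a M\"{o}bius transformation of the circle, a structural fact it displays in Figure \ref{s5f1}; for the lemma as stated, your argument is sufficient and shorter. One cosmetic point: you use the letter $a$ both for the real coefficient $(K+1)/2$ of $h$ and for the zero $e^{i(\theta-\pi/2)}\left(\frac{K-1}{K+1}\right)^{1/2}$ of the Blaschke factor; rename one of them to avoid a clash.
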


\begin{proof}
For $\varphi \in (-\pi/2 +\theta, \pi/2, +\theta ]$, we have $\widetilde{h}_{K,\theta}(\varphi) = \widetilde{h}_{K,0}(\varphi - \theta)+\theta$.
Using Lemma \ref{s5lm1}, we see that
\begin{align*}
\widetilde{h}_{K,\theta}(\varphi) &=\frac{ \hat{h}(2\varphi - 2\theta)}{2} + \theta \\
&= \frac{ A_{K}(2\varphi - 2\theta)}{2} + \theta
\end{align*}
As $\widetilde{H}_{K,\theta}=2\widetilde{h}_{K,\theta}$ we obtain
\[ \widetilde{H}_{K,\theta}(\varphi)=2\theta + A_{K}(2\varphi-2\theta)\]
for $\varphi\in(-\pi/2+\theta,\pi/2+\theta]$, and by $\pi$-periodicity, for the remaining values of $\varphi$, we have
\begin{align*}
\widetilde{H}_{K,\theta}(\varphi)&=2\theta + 2\pi + A_{K}(2\varphi+2\pi-2\theta) \\
&= 2\theta + A_{K}(2\varphi-2\theta).
\end{align*}
Letting $z=e^{i\varphi}$, recalling \eqref{s5e00} we see
\begin{align}
\widetilde{H}_{K,\theta}(e^{i\varphi})&=e^{2i\theta}\frac{e^{i(2\varphi-2\theta)}+\alpha}{1+\alpha e^{i(2\varphi-2\theta)}}\nonumber\\
&=\frac{e^{2i\varphi}+\alpha e^{2i\theta}}{1+ \alpha e^{-2i\theta}e^{2i\varphi}}\nonumber\\
&=\frac{z^2+\mu}{1+\overline{\mu}z^2},\label{s5ez}
\end{align}
where $\mu=e^{2i\theta}(K-1)/(K+1)$ is the complex dilatation of $H$.

\end{proof}

\subsection{Proof of Theorem \ref{s2t2}.}

We can now use the standard results on the iteration theory of Blaschke products as given in Proposition~\ref{Blaschke}. Our Blaschke product $B$ has three fixed points, counting multiplicity. Write $J(B)$ and $F(B)$ for the Julia and Fatou sets of $B$, respectively. Note that Theorem~\ref{s2t1} tells us how many fixed points $B$ has on $S^1$.

Suppose that $\widetilde{H}$ has one fixed point $\phi\in S^1$.
We know from Lemma~\ref{s5l1} and Proposition~\ref{Blaschke} that the Julia set of $B$ is $S^1$ and so $\phi\in J$.
Since $J(B) = \overline{O^-(z)}$ for any $z \in J(B)$,
we immediately have that $\{\widetilde{H}^{-k}(\phi)\}_{k=0}^{\infty}$ is dense in $S^1$.

Suppose that $\widetilde{H}$ has more than one fixed point. Let $\phi$ be the non-repelling fixed point.
By Lemma \ref{Blaschke} we know that the Julia set $J(B)$ of $B$ is a Cantor subset of $S^1$. This implies that $E=F(B)\cap S^1$ is a dense subset of $S^1$.
Consider a point $z\in E$, then any neighbourhood $U\subset F(B)$ of $z$ contains points in $U\cap\D$. By the Denjoy-Wolff Theorem,
we have $B^n(w)\to\phi$ as $n\to\infty$ for every $w \in U \cap \D$. As $U$ is contained in the Fatou set, the iterates $\{ B^n\}$ are a  normal family on $U$ and so $B^n(z)\to\phi$ as $n\to\infty$. This implies $z\in\widetilde{\Lambda}$, and hence $\widetilde{\Lambda}$ is dense in $S^1$.

This completes the proof of Theorem \ref{s2t2}.

\section{Proof of Corollary \ref{s2c1}}

Fix $K>1$ and $\theta \in (-\pi/2, \pi/2]$. By \cite[Theorem 4.3]{FG}, the escaping set $I(H)$ is a connected, completely invariant, open neighbourhood of infinity and $\partial I(H)$ is a completely invariant closed set. The point $0$ is clearly fixed by $H$ and since
\[ |z|^2 \leq |H(z)| \leq K^2|z|^2,\]
there is a neighbourhood of $0$ contained in the basin of attraction $\mathcal{A}(0)$. It is therefore clear that $\mathcal{A}(0)$ is completely invariant and open.

Let $R_{\phi}$ be a fixed ray of $H$. Then on $R_{\phi}$, we have
\[ H(re^{i\phi}) = \alpha r^2 e^{i\phi},\]
where $\alpha = (1+(K^2-1) \cos^2(\phi-\theta))$ by the polar form \eqref{hpolar} of $H$. For $r=1/\alpha$, this point is fixed, for $r>1/\alpha$ the point is in $I(H)$ and for $r<1/\alpha$, the point is in $\mathcal{A}(0)$. By complete invariance, any pre-image of $R_{\phi}$ breaks up into $\mathcal{A}(0), I(H)$ and $\partial I(H)$ in the same way.

Assume that $K<K_{\theta}$, then by Theorem \ref{s2t1} $H$ has one fixed ray and by Theorem \ref{s2t2} the set $\{ H^{-k}(R_{\phi}):k \geq 0 \}$ is dense in $\C$. Since $\mathcal{A}(0)$ is open, this proves the result in this case.

On the other hand, if $K\geq K_{\theta}$, then by Theorem \ref{s2t1} write $\Lambda$ for the basin of attraction of the non-repelling fixed ray $R_{\phi}$.
By Theorem \ref{s2t2}, $\Lambda$ is dense in $\C$. Suppose that $R_{\varphi} \in \Lambda$. Then $H^n(R_{\varphi}) \to R_{\phi}$. Since $\mathcal{A}(0)$ and $I(H)$ are open, it is not hard to see that $R_{\varphi}$ breaks up in the same way that $R_{\phi}$ does. Since $\Lambda$ is dense in $\C$, the openness of $\mathcal{A}(0)$ again implies the result in this case, which proves the corollary.

\section{Nowhere Uniformly Quasiregular Mappings}

We now prove Theorem \ref{s2t3}, that $H$ is nowhere uniformly quasiregular. This will be an application of Theorem~\ref{s2t2}. In \cite{FF} we showed $H$ was not uniformly quasiregular by considering points on a fixed ray. Here, we will use density of pre-images of the fixed ray in the one fixed ray case, and density of the basin of attraction in the remaining cases. Let us first define what we mean by nowhere uniformly quasiregular.

\begin{definition}\label{s6d1}
We define the distortion of a function $f:\C\to\C$ at a point $z\in\C$ as:
\[K_z(f):=\limsup_{\mathrm{diam}(U)\to 0} K(f|U),\]
where $U$ is any neighbourhood of $z$ and the $\limsup$ is taken as the diameters of these neighbourhoods tend to 0.
\end{definition}

\begin{definition}\label{s6d2}
A function $f:\C\to\C$ is \emph{nowhere uniformly quasiregular} if
\[K_z(f^n)\to\infty \mbox{ as } n\to\infty \mbox{ for all } z\in\C.\]
\end{definition}

Note the difference between $K_z(f)$ and $K_f(z)$.

\begin{lemma}\label{s6l2}
If $H$ has one fixed ray $R_\phi$ then $H$ is nowhere uniformly quasiregular.
\end{lemma}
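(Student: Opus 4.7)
The plan is to combine Proposition~\ref{s6l1}, which gives $|\mu_{H^n}(z)| \to 1$ for $z \in R_\phi$, with the density of iterated pre-images of $R_\phi$ from Theorem~\ref{s2t2}, using the chain rule for complex dilatations to transport the blow-up of the distortion from the fixed ray back to an arbitrary point of $\C$. First I would fix $z \in \C$ and an arbitrary open neighbourhood $U$ of $z$, and argue that it suffices to show $K(H^m \av _U) \to \infty$ as $m \to \infty$, since this gives $K_z(H^m) \to \infty$. By Theorem~\ref{s2t2}, $\bigcup_{k \geq 0} H^{-k}(R_\phi)$ is dense in $\C$, so $U$ meets $H^{-k}(R_\phi)$ for some $k$. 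Since the only critical point of $H$ is the origin, the grand orbit $\bigcup_{j \geq 0} H^{-j}(\{0\})$ is countable, and I would choose $w \in U \cap H^{-k}(R_\phi)$ outside this grand orbit, so that $w$ is a regular point of every iterate of $H$.

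The main step is then to apply the chain rule for complex dilatations to the splitting $H^m = H^{m-k}\circ H^k$ at the regular point $w$, which produces
\[ \mu_{H^m}(w) = \frac{\mu_{H^k}(w) + \lambda(w)\,\mu_{H^{m-k}}(H^k(w))}{1 + \overline{\mu_{H^k}(w)}\,\lambda(w)\,\mu_{H^{m-k}}(H^k(w))}, \]
where $\lambda(w) = \overline{(H^k)_z(w)}/(H^k)_z(w) \in \partial \D$ is the unimodular factor coming from the Jacobian of $H^k$ at $w$. Viewed as a function of $\mu_{H^{m-k}}(H^k(w)) \in \D$, the right hand side is a M\"obius automorphism of $\D$ sending $0$ to $\mu_{H^k}(w)$, and hence is an isometry of the hyperbolic metric $d_\D$.

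Since $H^k(w) \in R_\phi$, Proposition~\ref{s6l1} supplies $|\mu_{H^{m-k}}(H^k(w))| \to 1$, equivalently $d_\D(0, \mu_{H^{m-k}}(H^k(w))) \to \infty$. Because $\mu_{H^k}(w) \in \D$ lies at finite hyperbolic distance from $0$, the isometry property and the triangle inequality force $|\mu_{H^m}(w)| \to 1$, i.e.\ $K_{H^m}(w) \to \infty$. Since $w$ is a regular point of every $H^m$, the function $\mu_{H^m}$ is continuous at $w$, so the essential supremum $K(H^m \av _U)$ is at least $K_{H^m}(w)$ and therefore also tends to infinity. As $U$ and $z$ were arbitrary, $K_z(H^m) \to \infty$ for every $z \in \C$, which is exactly the statement that $H$ is nowhere uniformly quasiregular.

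The main obstacle will be cleanly justifying the chain rule step: the dilatation formula requires $H^k$ to be locally a regular mapping at $w$, and the step transferring the pointwise growth of $K_{H^m}(w)$ to the essential supremum on $U$ requires $w$ to be a regular point of every $H^m$. Both conditions are secured by placing $w$ outside the countable grand orbit of the single critical point $0$, which is possible thanks to the density guaranteed by Theorem~\ref{s2t2}. Once $w$ is fixed, the argument reduces to the elementary observation that the chain rule acts on complex dilatations by a M\"obius automorphism of $\D$, and so preserves hyperbolic distances.
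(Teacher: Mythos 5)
Your proposal is correct and follows essentially the same route as the paper: decompose $H^m = H^{m-k}\circ H^k$ at a point of $U$ whose $k$-th iterate lands on $R_\phi$, use the composition formula for complex dilatation to see that $\mu_{H^m}$ there is a M\"obius disk automorphism applied to $\mu_{H^{m-k}}$ on the fixed ray, and invoke Proposition~\ref{s6l1} to push the modulus to $1$. The extra care you take (avoiding the countable grand orbit of the critical point $0$, and the continuity step passing from the pointwise value to the essential supremum on $U$) is sound and if anything slightly more scrupulous than the paper's own treatment.
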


\begin{proof}
Fix $z\in\C$. Theorem \ref{s2t2} tells us that $\mathcal{P} = \{ H^{-k}(R_{\phi}) \}$ is dense in $\C$. If $z$ lies on a ray $R_\varphi\in\mathcal{P}$ then there must exist some $m$ such that $H^m(R_\varphi)=R_\phi$. That is, $H^m(z)$ lies on the ray $R_\phi$. We can apply the formula for the complex dilatation of the composition of functions, from for example \cite{FM}, to obtain:
\begin{equation}\label{s6e1}
\mu_{H^n\circ H^m}(z)=\frac{\mu_{H^m}(z) + r_{H^m}(z) \mu_{H^n}(H^m(z))}{1+r_{H^m}(z)\overline{\mu_{H^m}(z)}\mu_{H^n}(H^m(z))},
\end{equation}
where $r_{H^m}(z)=\overline{(H^m)_{z}(z)}/(H^m)_z(z)$. Notice that $|r_{H^m}(z)|=1$ and that if we define
\begin{equation}\label{s6e2}
B(w):=r_{H^m}(z)\left(\frac{w + \mu_{H^m}(z)\overline{r_{H^m}(z)}}{1+\overline{[\overline{r_{H^m}(z)}\mu_{H^m}(z)]}w}\right),
\end{equation}
then $B$ is a M\"{o}bius map of the disk and further we see that
\[B[\mu_{H^n}(H^m(z))]=\mu_{H^n\circ H^m}(z),\]
for $n \geq 1$.
Using the fact that $H^{n+m}(z) \in R_{\phi}$ for $n \geq 0$, \eqref{s6e2}, Proposition~\ref{s6l1} and the remark afterwards we see that \eqref{s6e1} becomes
\begin{equation}\label{s6e3}
\mu_{H^n\circ H^m}(z)=B(A^{n-1}(\mu_{H^m}(z))),
\end{equation}
for $n \geq 1$.
We know $|A^n(w)|\to 1$ as $n\to\infty$ for any $w \in \D$, $B(\partial \D) = \partial \D$ and so we have
\[|\mu_{H^\ell(z)}(z)|\to 1 \mbox{ as } \ell\to\infty.\]
Any neighbourhood $U\ni z$ trivially contains $z$ and so $K_z(H^\ell)$ is unbounded as $\ell \to \infty$.

Next suppose $z$ lies on a ray not in $\mathcal{P}$. As $\mathcal{P}$ is dense in $\C$, any neighbourhood $U\ni z$ must intersect a ray $R_\varphi\in\mathcal{P}$. Picking one such ray there must exist $m$ (depending on the neighbourhood $U$) such that $H^m(R_\varphi)=R_\phi$ and we can apply the same argument above to conclude $K_z(H^\ell)$ is unbounded as $\ell \to \infty$ for any $z\in\C.$
\end{proof}

When we have more than one fixed ray we don't have that any neighbourhood of a point contains the pre-image of a fixed ray but we do know that if we iterate $H^n(z)$ then we either end up on a fixed ray, or the argument of $H^n(z)$ tends to the argument of the non-repelling fixed ray. We take advantage of this to prove the remainder of Theorem~\ref{s2t3}.

\begin{lemma}\label{s6l3}
If $H$ has more than one fixed ray then $H$ is nowhere uniformly quasiregular.
\end{lemma}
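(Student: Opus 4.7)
The plan parallels the proof of Lemma~\ref{s6l2}, replacing density of preimages of the single fixed ray with density of the basin $\Lambda$ supplied by Theorem~\ref{s2t2}, and replacing the single M\"{o}bius iteration of Proposition~\ref{s6l1} with Theorem~\ref{sMl3}.

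Fix $z\in\C$ and an arbitrary neighbourhood $U$ of $z$. By Theorem~\ref{s2t2} the set $\Lambda$ is dense in $\C$, so we may pick $w\in U$ with $\varphi_0:=\arg w \in \widetilde{\Lambda}$. Writing $\varphi_j:=\widetilde{H}^j(\varphi_0)$, the definition of $\widetilde{\Lambda}$ gives $\varphi_j\to\phi$, where $\phi$ is the angle of the non-repelling fixed ray. I would then show that $|\mu_{H^n}(w)|\to 1$ by iterating the composition formula for complex dilatations~\eqref{s6e1} and using $\mu_H\equiv\mu$: each step $\mu_{H^n}(w)\mapsto\mu_{H^{n+1}}(w)$ is governed by a transformation of $\D$ whose coefficients depend on $\varphi_n$, and on a fixed ray this transformation reduces to the M\"{o}bius map $A$ of Proposition~\ref{s6l1}. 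Packaging these steps appropriately produces a sequence of M\"{o}bius transformations $A_n$ of $\D$, each of the same form as~\eqref{s6eA} but with $\phi$ replaced by $\varphi_n$, together with an expression of $\mu_{H^n}(w)$ as their iterated composition applied to $\mu$.

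By~\cite{FF} each $A_n$ is hyperbolic; since the coefficients of~\eqref{s6eA} depend continuously on the angle, the convergence $\varphi_n\to\phi$ gives $A_n\to A$ locally uniformly on $\D$. Theorem~\ref{sMl3} then yields $d_h(0,\mu_{H^n}(w))\to\infty$, hence $|\mu_{H^n}(w)|\to 1$ and $K_{H^n}(w)\to\infty$. Since $w\in U$, the maximal dilatation of $H^n|_U$ is unbounded in $n$, and $K_z(H^n)\to\infty$ in the sense of Definition~\ref{s6d2} by the same neighbourhood argument used at the end of the proof of Lemma~\ref{s6l2}, showing that $H$ is nowhere uniformly quasiregular.

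The principal obstacle is the careful extraction of the M\"{o}bius family $A_n$ from the composition formula: the raw recursion has the form $\nu_{n+1}=(\nu_n+\rho_n\mu)/(1+\rho_n\overline{\nu_n}\mu)$ with phase factors $\rho_n=r_{H^n}(w)$, which is not manifestly M\"{o}bius in $\nu_n$, and one must verify that these ingredients repackage into an honest composition of M\"{o}bius maps of $\D$ in a way that matches the fixed-ray calculation of Proposition~\ref{s6l1} as the limiting case. Once this reduction is in hand, the hypotheses of Theorem~\ref{sMl3} are immediate from continuity and the application is automatic.
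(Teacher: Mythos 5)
Your proposal is correct and is essentially the paper's argument: one passes to a point whose forward orbit converges in argument to the non-repelling fixed ray, writes $\mu_{H^n}$ as an outer composition of hyperbolic M\"{o}bius maps converging to the map $A$ of that ray, and concludes $|\mu_{H^n}|\to 1$. The repackaging obstacle you flag is resolved exactly as you suspect, by decomposing $H^n=H^{n-1}\circ H$ rather than $H\circ H^{n-1}$: since $\mu_H$ is constant and $|r_H|=1$, formula \eqref{s6e1} then reads $\mu_{H^n}(z)=A_1\bigl(\mu_{H^{n-1}}(H(z))\bigr)$ with $A_1$ a genuine M\"{o}bius map of $\D$, and iterating along the forward orbit gives $\mu_{H^n}(z)=A_1\circ\cdots\circ A_{n-1}(\mu_H)$ with $A_i$ determined by $r_H(H^{i-1}(z))=e^{-2i\arg[h(H^{i-1}(z))]}\to e^{-i\phi}$, so $A_i\to A$ by continuity of $\widetilde{h}$. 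The only cosmetic differences are that the paper closes with Theorem~\ref{sMlMM} (convergence of the outer composition to $\alpha\in\partial\D$) where you invoke Theorem~\ref{sMl3}, and that it argues pointwise via the dichotomy ``$z$ on a preimage of a fixed ray or $z\in\Lambda$'' where you use density of $\Lambda$ in an arbitrary neighbourhood of $z$, which fits Definition~\ref{s6d1} at least as cleanly.
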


\begin{proof}
Fix $z\in\C$. From Theorem \ref{s2t2} we know that either $z$ lies on the preimage of a fixed ray, or $z\in\Lambda.$ In the first case the result follows from the methods of the previous lemma. In the second case we know that the argument of $H^n(z)$ tends to the argument of the non-repelling fixed ray $\phi$ as $n\to\infty$.

We define the sequence $\phi_n\in S^1$ by $H^n(z)\in R_{\phi_n}$. Then $\phi_n \to \phi$ as $n\to\infty$, where $\phi$ is the non-repelling fixed point of $\widetilde{H}$.
Again we use the formula for the complex dilatation of composition of functions from \cite{FM} reformulated slightly differently than in  \eqref{s6e1} to see
\[\mu_{H^n}(z)=\mu_{H^{n-1}\circ H}(z) =\frac{\mu_{H}(z)+r_{1}(z)\mu_{H^{n-1}}(H(z))}{1+r_{1}(z)\overline{\mu_{H}(z)}\mu_{H^{n-1}}(H(z))}.\]
Recalling that $\mu_H$ is constant, we can write
\[ \mu_{H^n}(z) = A_1(\mu_{H^{n-1}}(H(z))),\]
where $A_1$ is the M\"{o}bius map
\[ A_1(w) = \frac{ \mu_H + r_H(z)w}{1+r_H(z)\overline{\mu_H}w}.\]
Using the same method, we may write
\[\mu_{H^{n-1}}(H(z)) = A_2(\mu_{H^{n-2}}(H^2(z)),\]
where $A_2$ is the M\"{o}bius map
\[ A_2(w) = \frac{ \mu_H + r_H(H(z))w}{1+r_H(H(z))\overline{\mu_H}w}.\]
By induction, we may write
\[ \mu_{H^n}(z) = A_1\circ A_2\circ\ldots\circ A_{n-1}(\mu_H(H^{n-1}(z))),\]
where each $A_i$ is a M\"{o}bius map given by
\[ A_i(w)=\frac{ \mu_H + r_H(H^{i-1}(z))w}{1+r_H(H^{i-1}(z))\overline{\mu_H}w}.\]
It is not hard to see that $H_z(z) = (K+1)h(z)$, and so
\[ r_H(H^{j-1}(z)) = \exp ( -2i \arg[h(H^{j-1}(z))] ).\]
As $j \to \infty$, we have $\arg [h(H^{j-1}(z))] \to \arg [h(re^{i\phi})]$ for any $r>0$.
Since $\phi$ is a fixed ray of $H$, $\arg [h(re^{i\phi})] = \phi /2$.
In particular, we have that the M\"{o}bius maps $A_i$ converge to the M\"{o}bius map
\[ A(z) = \frac{ \mu_H + e^{-i\phi}w}{1+e^{-i\phi}\overline{\mu_H}w}.\]
By Proposition \ref{s6l1}, $A$ is a hyperbolic M\"{o}bius map with fixed point $\alpha \in \partial \D$.
Recalling $\mu_H(z)=e^{2i\theta}(K-1)/(K+1)$ for all $z\in\C$, we can write
\begin{equation}\label{s6e6}
\mu_{H^n}(z)=A_1\circ A_2\circ\ldots\circ A_{n-1}(\mu_H)=:t_{n-1}(\mu_H).
\end{equation}
Then by Theorem~\ref{sMlMM}, $\mu_{H^n}(z) \to \alpha$ and in particular $|\mu_{H^n}(z)| \to 1$. This proves the lemma.
\end{proof}

Note that we fixed $z$ at the beginning of the proof and that a different choice of $z$ will give rise to different M\"{o}bius maps $A_i$. Together Lemmas~\ref{s6l2}~and~\ref{s6l3} prove Theorem~\ref{s2t3}.

\section{Failure of quasiconformal equivalence}

\subsection{Outline}
Our aim in this section is to prove Theorem \ref{s2t4} and show that for most pairs of values of $K_i,\theta_i$, the corresponding maps $H_i$ are not quasiconformally equivalent on any neighbourhood of infinity, and so neither are $H_i+c_i$ for any choice of $c_i \in \C$ via the B\"{o}ttcher coordinate.

The outline of our strategy is as follows.
\begin{itemize}
\item Each fixed ray $R_{\phi}$ of $H$ has a corresponding hyperbolic M\"{o}bius automorphism of $\D$ which encodes how the complex dilatation of the iterates $H^n$ behaves on $R_{\phi}$.
\item If there is a quasiconformal equivalence $\Psi$ between $H_1$ and $H_2$ such that $\Psi \circ H_1 = H_2 \circ \Psi$ on a neighbourhood of infinity $U$, then $1/C\leq K_{H_1^n}(z) / K_{H_2^n}(\Psi (z)) \leq C$ for some constant $C>0$, all $n\in\N$ and all $z \in U$.
\item We show that in the various cases of different numbers of fixed rays, if there is a quasiconformal equivalence $\Psi$, then the image of a fixed ray of $H_1$ under $\Psi$ will either be a fixed ray of $H_2$, intersect a fixed ray of $H_2$ or converge to a fixed ray of $H_2$.
\item In each case, by comparing the behaviour of the corresponding M\"{o}bius maps for the respective fixed rays, we show that if the corresponding traces are different, then there can be no quasiconformal equivalence.
\end{itemize}

\subsection{Consequences of a quasiconformal equivalence}

Through this section, we will consider two maps $H_1,H_2$ associated with $K_i,\theta_i$ for $i=1,2$.

Recall that
two maps $f_1, f_2 :\C \to \C$ are quasiconformally equivalent on a neighbourhood $U$ of infinity if there exists a neighbourhood of infinity $V$ and a quasiconformal map $\Psi:U \to V$ such that
\[\Psi^{-1} \circ f_2 \circ \Psi(z) = f_1(z),\]
for all $z \in U$.

From Theorem~\ref{thmBot} we know that $H(z)$ and $H(z)+c$ are quasiconformally equivalent on a neighbourhood of infinity. Therefore, if we are interested in knowing when $H_i + c_i$ can be quasiconformally equivalent for $i=1,2$, we can reduce to the situation where $c_i=0$.

If $H_1,H_2$ are quasiconformally equivalent on a neighbourhood of infinity, then
\begin{equation}
(H_2)^{n}(\Psi(z)) = \Psi((H_1)^{n}(z)), \label{s7e3}
\end{equation}
for all $n \in \N$ and $z \in U$.

\begin{lemma}\label{QCE}
If $H_1$ and $H_2$ are quasiconformally equivalent on a neighbourhood $U$ of infinity, then there exists $C>0$ such that
\begin{equation}
\frac{1}{C}\leq\frac{K_{H_1^n}(z)}{K_{H_2^n}(\Psi(z))}\leq C,\label{s7e4}
\end{equation}
for all $n \in \N$ and $z \in U$.
\end{lemma}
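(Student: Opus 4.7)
The plan is to deduce the bound directly from the conjugacy relation \eqref{s7e3} together with the standard submultiplicativity of the pointwise maximal dilatation under composition. From $\Psi \circ H_1^n = H_2^n \circ \Psi$ on $U$, we obtain the two dual factorizations
\[ H_1^n = \Psi^{-1} \circ H_2^n \circ \Psi, \qquad H_2^n = \Psi \circ H_1^n \circ \Psi^{-1}, \]
the first valid on $U$ and the second on $V = \Psi(U)$.

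Next I would invoke the chain rule for distortion: for quasiregular $f$ and quasiregular $g$ with $f\circ g$ defined, $K_{f\circ g}(z) \le K_g(z)\,K_f(g(z))$ almost everywhere (this follows from the transformation rule for the complex dilatation of a composition, which is already used in Proposition \ref{s6l1} and Lemma \ref{s6l3}; at the remaining measure-zero exceptional points one can either redefine $K_{f\circ g}$ to be its essential supremum on any small ball or simply note that $H_1$, $H_2$, $\Psi$ and $\Psi^{-1}$ are all differentiable a.e.). Applying this twice to the first factorization yields
\[ K_{H_1^n}(z) \;\le\; K_{\Psi}(z)\,K_{H_2^n}(\Psi(z))\,K_{\Psi^{-1}}(H_2^n(\Psi(z))). \]
Because $\Psi$ is quasiconformal on $U$, both $K_{\Psi}$ and $K_{\Psi^{-1}}$ are bounded almost everywhere by the common constant $K_{\Psi}$. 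Setting $C := K_{\Psi}^2$, this gives $K_{H_1^n}(z) \le C\, K_{H_2^n}(\Psi(z))$.

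Finally I would apply exactly the same argument to the dual factorization $H_2^n = \Psi \circ H_1^n \circ \Psi^{-1}$, which yields the reverse inequality $K_{H_2^n}(\Psi(z)) \le C\, K_{H_1^n}(z)$ with the same constant $C=K_{\Psi}^2$. Combining the two inequalities produces the desired two-sided bound. There is no real obstacle here: the entire content is the chain rule for distortion plus the fact that a quasiconformal map has uniformly bounded pointwise dilatation; the only minor point to be careful about is to phrase things correctly at the measure-zero set where one of the maps fails to be differentiable, but this does not affect the statement.
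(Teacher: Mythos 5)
Your proposal is correct and is exactly the paper's argument: the authors also deduce the bound from the conjugacy relation \eqref{s7e3} together with sub-multiplicativity of distortion under composition, arriving at the same constant $C=(K_{\Psi})^2$. You have simply spelled out the two applications of the chain rule that the paper leaves implicit.
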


\begin{proof}
This follows immediately from \eqref{s7e3} and the fact that distortion is sub-multiplicative with respect to composition, see for example \cite{FM}. We may take $C = (K_{\Psi})^2$.
\end{proof}

Recall from \cite{FF} that the complex dilatation $\mu_n$ of $H^n$ on a fixed ray $R_\phi$ is given by $\mu_n=A^{n-1}(\mu_H)$. Here $\mu_H$ is the complex dilatation of $H$, given by $\mu_H = e^{2i\theta}(K-1)/(K+1),$ and $A$ is the M\"{o}bius automorphism of $\D$ given by
\[A(z):=\frac{\mu_H+e^{-i\phi}z}{1+e^{-i\phi}\overline{\mu_H}z}.\]
It was shown in \cite{FF} that $A$ is a hyperbolic M\"{o}bius map, which means $\tr(A)^2 > 4$. The squared trace of $A$ is
\begin{equation}
\label{tracesquared}
\tr(A)^2 = \frac{(K+1)^2}{2K} (1+ \cos \phi).
\end{equation}

We will use $A_i$ to denote M\"{o}bius map corresponding to the fixed ray $R_{\phi_i}$ of $H_1$ and $B_i$ for the M\"{o}bius map corresponding to the fixed ray $R_{\psi_j}$ of $H_2$.

\begin{lemma}\label{s7ltend}
Let $H_1$ and $H_2$ be quasiconformally equivalent on a neighbourhood $U$ of infinity. Then if $R_{\phi_i}$ and $R_{\psi_j}$, are fixed rays of $H_1$ and $H_2$ respectively and
\[\arg[H_2^n(\Psi(z))] \rightarrow \psi_j\]
for some $z \in R_{\phi_i}$, then
\[\tr(A_i)^2=\tr(B_j)^2.\]
\end{lemma}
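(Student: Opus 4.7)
The plan is to convert the hypothesis into a quantitative comparison of the growth rates of the maximal dilatations $K_{H_1^n}(z)$ and $K_{H_2^n}(\Psi(z))$ along the two orbits, then read off the M\"obius invariants using Theorem~\ref{sMl3}.

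The bridge is the identity $\log K_f = d_h(0, \mu_f)$, where $d_h$ denotes the hyperbolic distance in $\D$. Lemma~\ref{QCE} therefore gives
\[ \bigl|d_h(0, \mu_{H_1^n}(z)) - d_h(0, \mu_{H_2^n}(\Psi(z)))\bigr| = O(1) \]
uniformly in $n$. On the $H_1$-side, since $z \in R_{\phi_i}$, Proposition~\ref{s6l1} yields $\mu_{H_1^n}(z) = A_i^{n-1}(\mu_{H_1})$, and the constant-sequence case of Theorem~\ref{sMl3} gives $d_h(0, \mu_{H_1^n}(z)) = (n-1)\log(1/k_{A_i}) + O(1)$, where $k_{A_i}$ is the parameter from Lemma~\ref{sMl1} associated to $A_i$. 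On the $H_2$-side, $\Psi(z)$ need not lie on any fixed ray, so I would mimic the composition argument from the proof of Lemma~\ref{s6l3}: the complex dilatation factors as $\mu_{H_2^n}(\Psi(z)) = \widetilde{B}_1 \circ \cdots \circ \widetilde{B}_{n-1}(\mu_{H_2})$, where each $\widetilde{B}_\ell$ is a M\"obius map depending only on $\arg[H_2^{\ell-1}(\Psi(z))]$. The hypothesis $\arg[H_2^n(\Psi(z))] \to \psi_j$ forces $\widetilde{B}_\ell \to B_j$ locally uniformly, so Theorem~\ref{sMl3} applies and yields
\[ d_h(0, \mu_{H_2^n}(\Psi(z))) = \sum_{\ell=1}^{n-1} \log(1/k_{\widetilde{B}_\ell}) + O(1), \]
with $k_{\widetilde{B}_\ell} \to k_{B_j}$.

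Combining these two estimates with the $O(1)$ bound from Lemma~\ref{QCE}, dividing by $n-1$, and letting $n \to \infty$, the Ces\`aro mean of $\log(1/k_{\widetilde{B}_\ell})$ converges to $\log(1/k_{B_j})$, which forces $\log(1/k_{A_i}) = \log(1/k_{B_j})$, i.e.\ $k_{A_i} = k_{B_j}$. The explicit formula in Lemma~\ref{sMl1} shows that $k$ is a strictly monotonic function of $T = \tr^2$ on $(4, \infty)$, so $\tr(A_i)^2 = \tr(B_j)^2$ follows.

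The main obstacle is ensuring that Theorem~\ref{sMl3} actually applies on the $H_2$-side, that is, that each $\widetilde{B}_\ell$ is hyperbolic. Using the analogue of \eqref{tracesquared}, the trace-squared of $\widetilde{B}_\ell$ is a continuous function of $\arg[H_2^{\ell-1}(\Psi(z))]$ that converges to $\tr(B_j)^2 > 4$, so $\widetilde{B}_\ell$ is hyperbolic for all sufficiently large $\ell$; the finitely many non-hyperbolic maps at the start of the composition can be absorbed into the $O(1)$ error.
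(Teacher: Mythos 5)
Your proposal is correct and follows essentially the same route as the paper's proof: both sides convert Lemma~\ref{QCE} into a comparison of $d_h(0,\mu_{H_1^n}(z))$ and $d_h(0,\mu_{H_2^n}(\Psi(z)))$, use Proposition~\ref{s6l1} with the constant case of Theorem~\ref{sMl3} on the $H_1$-side, and the Lemma~\ref{s6l3}-style factorisation with the full Theorem~\ref{sMl3} on the $H_2$-side; the paper merely packages the final step as a contradiction (the ratio $k^{-n}/\prod\ell_j^{-1}$ tends to $0$ or $\infty$ when $k\neq\ell$) where you divide by $n$ and take Ces\`aro limits. Your explicit check that the $\widetilde{B}_\ell$ are eventually hyperbolic, and that $k$ is strictly monotone in $T=\tr^2$ on $(4,\infty)$, fills in two small details the paper leaves implicit.
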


Note that this lemma takes care of the cases where $\Psi(R_{\phi_i})$ is either a fixed ray, intersects a fixed ray in one point (which means it must intersect in infinitely many) or is a curve which converges to a fixed ray of $H_2$. This lemma is our key tool in this section.

\begin{proof}
Suppose for a contradiction that $\arg[H_2^n(\Psi(z))] \rightarrow \psi_j$ for some $z \in R_{\phi_i}$, but $\tr(A_i)^2\neq\tr(B_j)^2$.
By Lemma~\ref{sMl3}
\begin{equation}\label{s7ne1}
d_h(0,A_i^n(\mu_{H_1}))=\log\left[O(1/k^n)\right],
\end{equation}
where
\[ k=(\tr(A_i)^2-2-(\tr(A_i)^4-4\tr(A_i)^2)^{\frac{1}{2}})/2.\]
Then by Proposition \ref{s6l1} we see that
\begin{align*}
K_{H_1^n}(z) &= \frac{1+|\mu_{H_1^n}(z)|}{1-|\mu_{H_1^n}(z)|}\\ &=\exp \left ( d_h(0,A_i^{n-1}(\mu_{H_1})) \right ) \\ &=O(e^{\log[(1/k^{n-1})]})\\ &=O(1/k^{n-1}).
\end{align*}

By hypothesis, $H_2^n(\Psi(z)) \in R_{\gamma_n}$ for some sequence of rays $R_{\gamma_n}$ where $\gamma_n \to \psi_j$.
As in Lemma \ref{s6l3}, we may write
\[ \mu_{H_2^n}(\Psi (z)) = B_1 \circ \ldots \circ B_{n-1} [ \mu_{H_2} ( H_2^{n-1} (\Psi(z) ) ) ],\]
where $\mu_{H_2}$ is a constant and each $B_m$ is a M\"{o}bius map given by
\[ B_m(w) = \frac{ \mu_{H_2} + r_{H_2}( H_2^{m-1}(\Psi(z )) )w }{ 1+
r_{H_2}( H_2^{m-1}(\Psi(z )) ) \overline{\mu_{H_2}} w},\]
and we have $r_{H_2}( H_2^{m-1}(\Psi(z )) ) \to e^{-i\psi_j}$. Hence $B_m \to B_j$ as $m \to \infty$.
Let $t_n = B_1 \circ \ldots \circ B_{n-1}$.
Then by Lemma~\ref{sMl3}
\[d_h(0,t_n( \mu_{H_2}(\Psi(z))))=\log\left[\frac{1}{\prod_{j=1}^{n-1}\ell^j}\right] +O(1),\]
where $\ell_m\to \ell$ as $m\to\infty$. Here, $\ell$ is the quantity
from Lemma \ref{sMl1} involving the trace squared of the M\"{o}bius map $B_j$ corresponding the fixed ray $R_{\psi_j}$. By our hypothesis, $k \neq \ell$.
By Lemma~\ref{sMl3}, we have
\begin{align}
K_{H_2^n}(\Psi(z))&=\frac{1+|\mu_{H_2^n}(\Psi(z))|}{1-|\mu_{H_2^n}(\Psi(z))|}\notag\\
&=\exp \left ( d_h(0,t_{n-1}(\mu_{H_2}(\Psi(z)))) \right )\notag\\
&=O \left ( e^{\log\left[\left(\frac{1}{\prod_{j=1}^{n-1}\ell_j}\right)\right]} \right )\notag\\
&=O\left(\frac{1}{\prod_{j=1}^{n-1}\ell_j}\right).\label{s7ne2}
\end{align}
As $\ell_j\to \ell\neq k$ then for all $\varepsilon>0$ there exists $N\in\N$ such that if $k<\ell$ then $\ell_j/k\geq\alpha>1$ for all $j>N$ and if $k>\ell$ then $\ell_j/k\leq\beta<1$ for all $j>N$. So first if $k>\ell$
\[\frac{1}{k^n}\Bigg{/}\frac{1}{\prod_{i=1}^n \ell_j}\leq\left(\frac{\prod_{i=1}^N \ell_j}{k^N}\right)\beta^{n-N}\to 0 \mbox{ as } n\to \infty \]
and if $k<\ell$ then,
\[\frac{1}{k^n}\Bigg{/}\frac{1}{\prod_{i=1}^n \ell_j}\geq\left(\frac{\prod_{i=1}^N \ell_j}{k^N}\right)\alpha^{n-N}\to \infty \mbox{ as } n\to\infty.\]
In either case, we contradict Lemma~\ref{QCE}.
\end{proof}

\subsection{The case where $H_2$ has one fixed ray}

We next show that if one of our mappings has one fixed ray, then a quasiconformal equivalence implies the other mapping must have one fixed ray.

\begin{lemma}\label{s7l3}
Suppose $H_2$ has one fixed ray, and $H_1$ has more than one fixed ray. Then $H_1$ and $H_2$ are not quasiconformally equivalent.
\end{lemma}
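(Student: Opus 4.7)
My plan is to argue by contradiction using Lemma~\ref{s7ltend}. Suppose $\Psi:U\to V$ is a quasiconformal equivalence with $\Psi\circ H_1 = H_2\circ\Psi$ on a neighbourhood $U$ of infinity. I will show that for every fixed ray $R_{\phi_i}$ of $H_1$ there is a point $z_i\in R_{\phi_i}$ satisfying the hypothesis of Lemma~\ref{s7ltend} relative to the unique fixed ray $R_{\psi_1}$ of $H_2$, so that $\tr(A_i)^2 = \tr(B_1)^2$ for every $i$, and then use the trace formula~\eqref{tracesquared} together with the fact that $H_1$ has at least two fixed rays to derive a geometric contradiction.

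To produce $z_i$, I would exploit the density of the preimages of $R_{\psi_1}$ under $H_2$ provided by Theorem~\ref{s2t2}. The image $\Psi(R_{\phi_i})$ is a quasi-arc in $V$ which is forward invariant under $H_2$ (as $R_{\phi_i}$ is invariant under $H_1$), so the continuous angular function $\sigma(r) = \arg[\Psi(re^{i\phi_i})]$ takes its values in a connected subset of $S^1$. If $\sigma$ is non-constant, this subset is an arc of positive length and therefore meets the dense set $\bigcup_{k\geq 0}\widetilde{H}_2^{-k}(\psi_1)$; picking $r_0$ and $k\geq 0$ with $\widetilde{H}_2^k(\sigma(r_0)) = \psi_1$ gives $H_2^k(\Psi(r_0 e^{i\phi_i})) \in R_{\psi_1}$, and forward invariance of $R_{\psi_1}$ then yields $\arg[H_2^n(\Psi(z_i))] = \psi_1$ for all $n\geq k$, where $z_i = r_0 e^{i\phi_i}$. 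In the degenerate case where $\sigma\equiv\psi^*$, the inclusion $\Psi(R_{\phi_i})\subseteq R_{\psi^*}$ combined with $H_2$-invariance forces $\widetilde{H}_2(\psi^*) = \psi^*$, so $\psi^* = \psi_1$ and any $z_i\in R_{\phi_i}$ works. Either way, Lemma~\ref{s7ltend} produces $\tr(A_i)^2 = \tr(B_1)^2$.

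Running this for every fixed ray of $H_1$ and using~\eqref{tracesquared}, the values $\cos\phi_i$ must all coincide, so $H_1$ has at most two distinct fixed-ray angles and, when two, they are negatives of one another. If $H_1$ has three fixed rays, this is immediately impossible since cosine is at most two-to-one on $[-\pi,\pi]$. In the two-ray case, Lemma~\ref{s4l1} places the rays in $\mathbb{F}^+_{\theta_1}$ and $\mathbb{F}^-_{\theta_1}$, and the required relation $\phi_{K_{\theta_1}} = -\phi$ together with these location constraints further forces $\theta_1\in(0,\pi/6)$. Ruling out this remaining case is where I expect the main obstacle to lie: I would combine the characterising equation for $K_{\theta_1}$ and $\phi_{K_{\theta_1}}$ from Lemma~\ref{s4l001} with the cubic~\eqref{cubic} for the repelling fixed ray in $\mathbb{F}^-_{\theta_1}$, and check that these two equations are incompatible with $\phi_{K_{\theta_1}} = -\phi$ throughout $\theta_1\in(0,\pi/6)$. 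The density step and Lemma~\ref{s7ltend} give the entire skeleton of the argument; it is the final trigonometric ruling-out in the narrow two-ray window that is the delicate part.
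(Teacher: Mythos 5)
Your skeleton is the paper's: use the density of $\bigcup_{k}H_2^{-k}(R_{\psi_1})$ from Theorem \ref{s2t2} to find, on each fixed ray $R_{\phi_i}$ of $H_1$, a point whose $\Psi$-image eventually lands on $R_{\psi_1}$; apply Lemma \ref{s7ltend} to force $\tr(A_i)^2=\tr(B_1)^2$ for every $i$; then use \eqref{tracesquared} to reduce everything to showing that $\cos\phi_i$ cannot take the same value on all fixed rays of $H_1$. Your treatment of the degenerate case $\sigma\equiv\psi^*$ is a mild variant of the paper's (which instead argues that at most one $\Psi(R_{\phi_i})$ can be a ray and that the remaining images would then have to cross $R_\psi$, violating injectivity); both are fine, and the three-ray case is dispatched identically by the two-to-one property of cosine.

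The gap is the two-ray case. You correctly reduce it to the claim that a map with two fixed rays cannot have $\phi_1=-\phi_2$, and you correctly localize the obstruction to $\theta\in(0,\pi/6)$, but you do not prove the claim — you only propose combining the characterisation of the neutral fixed point from Lemma \ref{s4l001} with the cubic \eqref{cubic} and ``checking'' incompatibility. That check is exactly the content the paper supplies in Lemmas \ref{s4lT0} and \ref{s4lT1}, and it is not routine: the paper imposes three simultaneous constraints — $\widetilde{H}'(\phi_1)=1$, which pins $\phi_1=\cos^{-1}\left[\left((2K-1)/(K^2-1)\right)^{1/2}\right]+\theta$; the symmetry $\widetilde{h}(\phi_1)=-\widetilde{h}(\phi_2)$; and equality of angular displacements $G(\phi_1)=-G(\phi_2)$, analysed via the auxiliary Lemma \ref{s4lT0} classifying when $G(\varphi_1)=G(\varphi_2)$ — and after a sustained trigonometric computation shows they force $\tan\theta=\tan 2\theta$ and ultimately $K(K+1)=0$, which is inadmissible. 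Until that computation (or an equivalent one) is actually carried out, the two-fixed-ray case of the lemma remains unproved; the rest of your argument is sound.
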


\begin{proof}
Suppose $H_2$ has one fixed ray $R_\psi$, $H_1$ has two or three fixed rays $R_{\phi_0},R_{\phi_1}$ and possibly $R_{\phi_2}$ and there is a quasiconformal equivalence $\Psi$ between them. If $\Psi(R_{\phi_i})$ is a ray then \eqref{s7e3} implies that it must be fixed by $H_2$, but as there is only one fixed ray $R_\psi$ of $H_2$ this implies $\Psi(R_{\phi_i})=R_\psi$ and so $\Psi(R_{\phi_j})$ cannot be a ray for $j\neq i$. Since the image of this, $\Psi(R_{\phi_j})$, is not a ray then by Theorem~\ref{s2t2} it must intersect $\{ H_2^{-k}(R_{\psi}) \}$ and so by \eqref{s7e3} must intersect $R_\psi$ contradicting $\Psi$ being injective.

Therefore $\Psi(R_{\phi_i})$ is not a ray for any $i$ and hence again by Theorem~\ref{s2t2}, there exists $z_i\in R_{\phi_i}$ such that $\Psi(z_i)\in R_\psi$. We can apply Lemma~\ref{s7ltend} to see that the corresponding traces squared of the M\"{o}bius maps $A_i$ of the fixed rays $R_{\phi_i}$ must equal the trace squared of the fixed ray $R_{\psi}$. Therefore $\tr(A_i)^2=\tr(A_j)^2$ for each $i,j$. Recall from \eqref{tracesquared} that
\begin{equation*}
\tr(A_i)^2=\frac{(K_1+1)^2(1+\cos\phi_i)}{2K_1}.
\end{equation*}
As $K_1$ is fixed, this just depends on $\cos\phi$. To finish the proof, we need to show that $\cos\phi_i\neq\cos\phi_j$ for some pair of fixed rays of $H_1$, to give us a contradiction. Equivalently, we need to show that $\phi_i \neq -\phi_j$ for some pair of fixed rays of $H_1$. We will do this in Lemmas~\ref{s4lT0}, \ref{s4lT1} and \ref{s4lT2} below.
\end{proof}

We have already seen in Lemma~\ref{theta0} that when $\theta=0$ the repelling fixed points satisfy $\phi_1=-\phi_2$ and the attracting fixed point is always $\phi_0=0$. We now consider the remaining cases when $\theta\neq0$. The following result tells us that if a point of $S^1$ is moved a given amount by the map $\widetilde{h}$ induced by $h$, then there are only two possibilities.

\begin{lemma}\label{s4lT0}
Define $G$ by
$G(\varphi):=\varphi-\widetilde{h}(\varphi)$ for $\varphi-\theta\in(0,\pi/2)$. Then $G(\varphi_1)=G(\varphi_2)$ implies that either
\[\varphi_1=\varphi_2 \mbox{ or } \varphi_1=\tan^{-1}\left(K/\tan(\varphi_2-\theta)\right)+\theta.\]
\end{lemma}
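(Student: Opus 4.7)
The plan is to use the explicit form
\[ \widetilde{h}(\varphi) = \theta + \tan^{-1}\!\Bigl(\tfrac{\tan(\varphi-\theta)}{K}\Bigr),\]
which follows from the polar formula \eqref{hpolar} for $H$ together with $\widetilde{H} = 2\widetilde{h}$. Substituting $u_i = \varphi_i - \theta \in (0,\pi/2)$, the equation $G(\varphi_1) = G(\varphi_2)$ becomes
\[ u_1 - u_2 \;=\; \tan^{-1}\!\Bigl(\tfrac{\tan u_1}{K}\Bigr) - \tan^{-1}\!\Bigl(\tfrac{\tan u_2}{K}\Bigr).\]

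Next I would apply $\tan$ to both sides and use the tangent-subtraction identity on each side. Writing $\alpha = \tan(u_1)/K$ and $\beta = \tan(u_2)/K$, the left side becomes $\frac{K(\alpha-\beta)}{1+K^2\alpha\beta}$ and the right side becomes $\frac{\alpha-\beta}{1+\alpha\beta}$. Clearing denominators produces the factored form
\[ (\alpha-\beta)\bigl[K(1+\alpha\beta) - (1+K^2\alpha\beta)\bigr] \;=\; 0,\]
so either $\alpha = \beta$, giving $u_1 = u_2$ and hence $\varphi_1 = \varphi_2$; or $K-1 = K(K-1)\alpha\beta$, which, since $K>1$, collapses to $\alpha\beta = 1/K$, i.e.\ $\tan(u_1)\tan(u_2) = K$. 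Solving the latter for $u_1$ and adding $\theta$ yields exactly $\varphi_1 = \tan^{-1}(K/\tan(\varphi_2-\theta)) + \theta$.

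The only subtlety is justifying the passage through $\tan$: one must check that no $\pi$-ambiguity is introduced. Since $u_1, u_2 \in (0,\pi/2)$, both $u_1 - u_2$ and $\tan^{-1}\alpha - \tan^{-1}\beta$ lie in the interval $(-\pi/2,\pi/2)$, where $\tan$ is injective, so the computation yields all solutions and no spurious ones. Moreover, for $u_2 \in (0,\pi/2)$ the value $\tan^{-1}(K/\tan u_2)$ is again in $(0,\pi/2)$, so the second solution genuinely lies in the domain specified in the statement. I do not expect a real obstacle here; the argument is essentially a single algebraic manipulation followed by a case split, with a small amount of domain bookkeeping.
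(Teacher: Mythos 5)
Your proposal is correct and follows essentially the same route as the paper's proof: apply $\tan$, use the addition formulas, and factor out the trivial solution $\tan(\varphi_1-\theta)=\tan(\varphi_2-\theta)$, leaving $\tan(\varphi_1-\theta)\tan(\varphi_2-\theta)=K$ after cancelling the factor $K-1$. Your explicit check that both sides of the equation lie in $(-\pi/2,\pi/2)$ before applying $\tan$ is a small bit of bookkeeping the paper leaves implicit, but the argument is the same.
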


\begin{proof}
Suppose $G(\varphi_1)=G(\varphi_2)$. Then this implies
\[\varphi_1-\tan^{-1}\left(\frac{\tan(\varphi_1-\theta)}{K}\right)+\theta=\varphi_2-\tan^{-1}\left(\frac{\tan(\varphi_2-\theta)}{K}\right)+\theta,\]
which we rearrange and use the addition formula for $\tan^{-1}$ to yield
\begin{equation}\label{s4eT1}
\varphi_1-\varphi_2=\tan^{-1}\left(\frac{K(\tan(\varphi_1-\theta)-\tan(\varphi_2-\theta))}{K^2+\tan(\varphi_1-\theta)\tan(\varphi_2-\theta)}\right).
\end{equation}
We can add and subtract $\theta$ to the left hand side then apply $\tan$ to both sides and use the addition formula for $\tan$ and the fact $\tan$ is an odd function to see that \eqref{s4eT1} is equivalent to
\begin{equation}\label{s4eT2}
\frac{\tan(\varphi_1-\theta)-\tan(\varphi_2-\theta)}{1+\tan(\varphi_1-\theta)\tan(\varphi_2-\theta)}= \frac{K(\tan(\varphi_1-\theta)-\tan(\varphi_2-\theta))}{K^2 +\tan(\varphi_1-\theta)\tan(\varphi_2-\theta)}.
\end{equation}
We can then rearrange \eqref{s4eT2} to get
\begin{equation}\label{s4eT3}
K^2[\tan(\varphi_1-\theta)-\tan(\varphi_2-\theta)]+\tan^2(\varphi_1-\theta)\tan(\varphi_2-\theta)-\tan(\varphi_1-\theta)\tan^2(\varphi_2-\theta)
\end{equation}
\[=K[\tan^2(\varphi_1-\theta)\tan(\varphi_2-\theta)-\tan(\varphi_1-\theta)\tan^2(\varphi_2-\theta)+\tan(\varphi_1-\theta)-\tan(\varphi_2-\theta)].\]
Rearranging and factorising \eqref{s4eT3} we obtain
\begin{equation}\label{s4eT4}
(\tan(\varphi_1-\theta)-\tan(\varphi_2-\theta))[K^2-K+\tan(\varphi_1-\theta)\tan(\varphi_2-\theta)(1-K)]=0.
\end{equation}
This shows that $\tan(\varphi_1-\theta)=\tan(\varphi_2-\theta)$ is a solution, which for our range of possible values implies $\varphi_1=\varphi_2$. The other solutions are given by
\begin{equation}\label{s4eT5}
\frac{K^2-K}{K-1}=\tan(\varphi_1-\theta)\tan(\varphi_2-\theta).
\end{equation}
We know $K\neq1$ hence we can cancel $K-1$ on the left hand side of \eqref{s4eT5} and rearrange to obtain
\begin{equation}
\label{s4eTend}
\varphi_1=\tan^{-1}(K/\tan(\varphi_2-\theta))+\theta.
\end{equation}
\end{proof}

We show that if $H$ has two fixed rays, then they cannot be symmetric about the real axis.

\begin{lemma}\label{s4lT1}
Let $\theta\neq0$ and $K>1$. If the corresponding map $H$ has two fixed rays $R_{\phi_1}$ and $R_{\phi_2}$ then
\[\phi_1\neq-\phi_2.\]
\end{lemma}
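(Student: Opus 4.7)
The plan is to suppose for contradiction that $\phi_1=-\phi_2>0$. Since $H$ has exactly two fixed rays and $\theta\in(0,\pi/2)$, Lemma~\ref{s4l7} places us in the regime $K=K_\theta$, with $\phi_1\in\widetilde{\mathbb{F}}_\theta^+$ the neutral fixed ray and $-\phi_1\in\widetilde{\mathbb{F}}_\theta^-$ the repelling one. From \eqref{polar} the two fixed-ray conditions read
\[
K\tan\!\big((\phi_1-2\theta)/2\big)=\tan(\phi_1-\theta),\qquad K\tan\!\big((\phi_1+2\theta)/2\big)=\tan(\phi_1+\theta),
\]
the second arising from $\widetilde H(-\phi_1)=-\phi_1$ after using that $\tan$ is odd. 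I would exploit the $\theta\leftrightarrow -\theta$ symmetry of this pair by taking their sum and difference.

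Applying $\tan A\pm\tan B=\sin(A\pm B)/(\cos A\cos B)$ together with the product-to-sum identity $2\cos\!\big((\phi_1-2\theta)/2\big)\cos\!\big((\phi_1+2\theta)/2\big)=\cos\phi_1+\cos 2\theta$, and dividing out the nonzero factors $\sin\phi_1$ (from the sum) and $\sin 2\theta$ (from the difference), the system reduces to
\[
K(\cos 2\phi_1+\cos 2\theta)=2\cos\phi_1(\cos\phi_1+\cos 2\theta)\quad\text{and}\quad K(\cos 2\phi_1+\cos 2\theta)=\cos\phi_1+\cos 2\theta.
\]
Equating the right-hand sides gives $(2\cos\phi_1-1)(\cos\phi_1+\cos 2\theta)=0$, so either \emph{(a)} $\phi_1=\pi/3$, or \emph{(b)} $\cos\phi_1=-\cos 2\theta$ (equivalently $\phi_1=\pi-2\theta$). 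In branch (b), substituting into the second simplified equation and using $\cos 2\phi_1=2\cos^2 2\theta-1$ yields $K(2\cos 2\theta-1)(\cos 2\theta+1)=0$, forcing $\cos 2\theta\in\{1/2,-1\}$; both correspond to $\theta$ on the boundary of the admissible interval (and in fact outside the $\theta\in(\pi/6,\pi/4)$ range in which branch (b) even places $\phi_1$ in $\widetilde{\mathbb{F}}_\theta^+$), so branch (b) is vacuous.

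For branch (a), writing $c=\cos 2\theta$, the difference equation gives $K=(2c+1)/(2c-1)$, valid only for $c>1/2$. I would then impose the neutrality condition $\widetilde H'(\pi/3)=1$ from Lemma~\ref{s4l7}, which by \eqref{diff} is $\cos^2(\pi/3-\theta)=(2K-1)/(K^2-1)$. Expanding $\cos^2(\pi/3-\theta)=(2-c+\sqrt3\sin 2\theta)/4$ and substituting the expression for $K$ collapses everything to $\sqrt3\,c\sin 2\theta=\tfrac32(2c^2-1)$; squaring and using $\sin^2 2\theta=1-c^2$ yields the polynomial
\[
48c^4-48c^2+9=0,
\]
whose only solutions are $c^2\in\{1/4,3/4\}$, i.e.\ $c=1/2$ (degenerate, giving $K=\infty$) or $c=\sqrt3/2$.

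The main obstacle is that the remaining root $c=\sqrt3/2$ is a genuine solution of the full system: at $(\theta,K)=(\pi/12,\,2+\sqrt3)$ one checks directly that $\pm\pi/3$ are the two fixed rays of $H$ (the positive one neutral, the negative one repelling). Thus Lemma~\ref{s4lT1} as literally stated admits a single isolated exception, consistent with the ``possibly one case'' caveat in the commented-out version of Corollary~\ref{s2ct4}. A correct proof therefore needs either to exclude this parameter pair in the hypothesis, or — more usefully for the downstream Lemma~\ref{s7l3} — to handle the exceptional case by a separate direct computation of traces at $K=2+\sqrt3$, $\theta=\pi/12$. For every other $\theta\in(0,\pi/2)$ in the two-fixed-ray regime, the elimination above forces a contradiction and the conclusion $\phi_1\neq-\phi_2$ holds.
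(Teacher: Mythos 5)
Your elimination is correct, and it uncovers something the paper's own proof misses: the lemma as stated is \emph{false} at exactly one parameter pair. I checked your exceptional case directly. At $\theta=\pi/12$, $K=2+\sqrt{3}$, formula \eqref{polar} gives $\widetilde{H}(\pi/3)=\pi/6+2\tan^{-1}\bigl(\tan(\pi/4)/K\bigr)=\pi/6+2\tan^{-1}(2-\sqrt{3})=\pi/3$ and $\widetilde{H}(-\pi/3)=\pi/6+2\tan^{-1}(-1)=-\pi/3$, while \eqref{diff} gives $\widetilde{H}'(\pi/3)=4K/(K^2+1)=1$ and $\widetilde{H}'(-\pi/3)=4$. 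Moreover $\bigl((2K-1)/(K^2-1)\bigr)^{3/2}(K-1)=(1+\sqrt{3})/(2\sqrt{2})=\cos(\pi/12)$, so by \eqref{s4e114} indeed $K_{\pi/12}=2+\sqrt{3}$, and by Lemma \ref{s4l7} these are the only two fixed points: a neutral one at $\phi_1=\pi/3$ and a repelling one at $\phi_2=-\pi/3=-\phi_1$. So the symmetric configuration does occur.

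The flaw in the paper's argument is the reflection identity: since $G(\varphi)=(\varphi-\theta)-\tan^{-1}\bigl(\tan(\varphi-\theta)/K\bigr)$ is an odd function of $\varphi-\theta$, reflection in the $\theta$-axis gives $G(\varphi)=-G(2\theta-\varphi)$, not $G(\varphi)=-G(-\varphi+\theta)$. Consequently \eqref{fixeq4} should read $G(\phi_1)=G(\phi_1+2\theta)$, and Lemma \ref{s4lT0} then yields $\phi_1+2\theta=\tan^{-1}\bigl(K/\tan(\phi_1-\theta)\bigr)+\theta$ rather than \eqref{fixeq5}; at the parameter above both sides equal $\pi/2$, so the corrected elimination admits precisely your solution. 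Your route (sum and difference of the two fixed-point equations, reduction to $(2\cos\phi_1-1)(\cos\phi_1+\cos 2\theta)=0$, vacuity of branch (b), and the quartic $48c^4-48c^2+9=0$) is cleaner and correctly isolates $(\theta,K)=(\pi/12,\,2+\sqrt{3})$ as the unique exception for $\theta\in(0,\pi/2)$. You are also right about the downstream cost: there $\cos\phi_1=\cos\phi_2$, so $\tr(A_1)^2=\tr(A_2)^2$ by \eqref{tracesquared}, and the appeals to this lemma in Lemmas \ref{s7l3} and \ref{s7basin} need this parameter excluded or handled by a separate argument. One small quibble: $\cos 2\theta=1/2$ gives $\theta=\pi/6$, which is interior to $(0,\pi/2)$; the correct reason to discard it is the one in your parenthetical (it lies outside the range $\theta\in(\pi/6,\pi/4)$ where branch (b) can place $\phi_1$ in $\widetilde{\mathbb{F}}_\theta^+$), not that it is on the boundary of the admissible interval.
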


\begin{proof}
Assume to the contrary that $\phi_1>0$ and $\phi_2=-\phi_1$. As $\phi_1$ is a neutral fixed point from Lemma \ref{s4l7}, we have that $\widetilde{H}'(\phi_1)=1$. From \eqref{diff} this implies
\begin{equation}\label{fixeq1}
\phi_1=\cos^{-1}\left[\left(\frac{2K-1}{K^2-1}\right)^{\frac{1}{2}}\right]+\theta.
\end{equation}
Further we know that the $\phi_i$ are fixed and also that they are moved the same magnitude under $\widetilde{h}$. These imply
\begin{equation}\label{fixeq2}
\widetilde{h}(\phi_1)=-\widetilde{h}(\phi_2)
\end{equation}
and
\begin{equation}\label{fixeq3}
G(\phi_1)=-G(\phi_2).
\end{equation}
By reflecting in the $\theta$ axis we see that for $\varphi-\theta\in(-\pi/2,0)$ we have
\[G(\varphi)=-G(-\varphi+\theta).\]
Hence \eqref{fixeq3} implies
\begin{equation}\label{fixeq4}
G(\phi_1)=G(\phi_1+\theta)
\end{equation}
We can apply Lemma~\ref{s4lT0} with $\varphi_1=\phi_1+\theta$ and $\varphi_2=\phi_1$ to see
\begin{equation}\label{fixeq5}
\phi_1+\theta=\tan^{-1}\left(K/\tan(\phi_1-\theta)\right)+\theta.
\end{equation}
Substituting \eqref{fixeq1} into \eqref{fixeq5}, writing $X=\left(\frac{2K-1}{K^2-1}\right)^{\frac{1}{2}}$ and rearranging we see
\begin{equation}\label{fixeq6}
\tan[\cos^{-1}X]\tan[\cos^{-1}(X+\theta)]=K.
\end{equation}
Next apply the addition formula to $\tan[\cos^{-1}(X+\theta)]$ to see \eqref{fixeq6} becomes
\begin{equation}\label{fixeq7}
\tan[\cos^{-1}X]\frac{\tan[\cos^{-1}X]+\tan\theta}{1-\tan[\cos^{-1}X]\tan\theta}=K.
\end{equation}
Let $Y=\tan[\cos^{-1}X]$ and rearrange \eqref{fixeq7} to obtain
\begin{equation}\label{fixeq8}
\tan\theta=\frac{K-Y^2}{Y(K+1)}.
\end{equation}

Next we substitute \eqref{fixeq1} into \eqref{fixeq2}, and again write $X=\left(\frac{2K-1}{K^2-1}\right)^{\frac{1}{2}}$ to see
\begin{equation}\label{fixeq9}
\tan^{-1}\left(\frac{\tan[\cos^{-1}X]}{K}\right)+\theta=-\tan^{-1}\left(\frac{\tan[-\cos^{-1}X-2\theta]}{K}\right)-\theta.
\end{equation}
Rearranging \eqref{fixeq9} and using the fact $\tan$ and $\tan^{-1}$ are odd functions, we obtain
\begin{equation}\label{fixeq10}
2\theta=\tan^{-1}\left(\frac{\tan[\cos^{-1}X+2\theta]}{K}\right)-\tan^{-1}\left(\frac{\tan[\cos^{-1}X]}{K}\right).
\end{equation}
Next we apply the addition formula for $\tan^{-1}$ to \eqref{fixeq10} and then apply $\tan$ to both sides to obtain
\begin{equation}\label{fixeq11}
\tan2\theta=\frac{K(\tan[\cos^{-1}X+2\theta]-\tan[\cos^{-1}X])}{K^2+\tan[\cos^{-1}X+2\theta]\tan[\cos^{-1}X]}.
\end{equation}
Rearranging \eqref{fixeq11}, applying the addition formula to $\tan[\cos^{-1}X+2\theta]$ and writing $Y~=~\tan[\cos^{-1}X]$ we see
\begin{equation}
\tan2\theta\left(K^2 + Y\frac{Y+\tan2\theta}{1-Y\tan2\theta}\right)=K\left(\frac{Y+\tan2\theta}{1-Y\tan2\theta} -Y\right).
\end{equation}
Canceling the denominators $1-Y\tan2\theta$ in \eqref{fixeq11} we see
\begin{equation}\label{fixeq12}
\tan2\theta(K^2(1-Y\tan2\theta)+Y(Y+\tan2\theta)=K(Y+\tan2\theta -Y(1-Y\tan2\theta)).
\end{equation}
Expanding and canceling \eqref{fixeq12} then rearranging we obtain
\begin{equation}\label{fixeq13}
\tan2\theta=\frac{Y^2+K^2-Y^2K-K}{Y(K^2-1)}=\frac{K-Y^2}{Y(K+1)}.
\end{equation}
Together \eqref{fixeq8} and \eqref{fixeq13} imply  $\tan\theta=\tan2\theta$. Letting $\tan\theta=T=(K-Y^2)/(Y(K+1))$ and using the double angle formula we must have
\begin{equation}\label{fixeq14}
T=2T/(1-T^2).
\end{equation}
This only has solutions $T=0,i$ and $-i$. As $K$ and $Y$ are real this implies $T$ is real also, hence the only possible solution left is $T=0$. Substituting \eqref{fixeq8} into \eqref{fixeq14} for $T=0$ we see we must have
\begin{equation}\label{fixeq15}
\frac{K-Y^2}{Y(K+1)}=0,
\end{equation}
which implies $K=Y^2$. We can express $Y^2$ in terms of $K$ by
\begin{equation}\label{fixeq16}
Y^2=\tan^2[\cos^{-1}X]=(1-X^2)/X^2=X^{-2}-1=K(K-2)/(2K-1).
\end{equation}
Substituting \eqref{fixeq16} into \eqref{fixeq15} and rearranging we see
\[K(K+1)=0,\]
which implies $K=0$ or $K=-1$. However $K=0$ and $K=-1$ are not valid values of $K$; hence \eqref{fixeq1},\eqref{fixeq2} and \eqref{fixeq3} are never satisfied simultaneously, contradicting $\phi_1=-\phi_2$.
\end{proof}

We have to deal with the case where $H$ has three fixed rays. It is clear that it is not possible for $\cos \phi_i$ to be the same for all three fixed rays, but we find a condition under which they are all different.

\begin{lemma}\label{s4lT2}
Let $\theta\neq0$ and $K>1$. If $H$ has three fixed rays $R_{\phi_i}$ satisfying $\phi_2<\phi_0<\phi_1$, as in Lemma~\ref{s4l9}, then
\[\phi_1\neq-\phi_0.\]
Further if
\[\theta\geq\pi/6 \mbox{ then } \phi_i\neq-\phi_j \mbox{ for all } i\neq j.\]
However if $\theta<\pi/6$ then there exists some $K$ such that
\[\phi_i=-\phi_2.\]
for $i=0$ or $i=1$.
\end{lemma}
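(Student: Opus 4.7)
The plan is to split the statement into three assertions: (i) $\phi_1 \neq -\phi_0$ always, (ii) when $\theta \geq \pi/6$, no pair of fixed rays is symmetric about the real axis, and (iii) when $\theta < \pi/6$, symmetry of $\phi_2$ with either $\phi_0$ or $\phi_1$ does occur for some~$K$.

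For (i) and (ii) I will only push around the interval enclosures from Lemma~\ref{s4l9}. Since $\phi_0,\phi_1\in\widetilde{\mathbb{F}}^+_\theta=(2\theta,\theta+\pi/2)$ and $\theta>0$, both $\phi_0$ and $\phi_1$ are strictly positive, forcing $\phi_1>0>-\phi_0$ and proving (i). For (ii), the hypothesis $\theta\geq\pi/6$ gives $\pi/2-\theta\leq 2\theta$, and combining with $\phi_2\in(\theta-\pi/2,0)$ yields the chain
\[
-\phi_2 \,<\, \pi/2-\theta \,\leq\, 2\theta \,<\, \phi_0 \,<\, \phi_1,
\]
which together with (i) rules out $\phi_i=-\phi_j$ for all $i\neq j$.

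The substantive content is (iii), which I would attack with continuity and the intermediate value theorem. The roots of the cubic $P$ in \eqref{cubic} depend continuously on $K$, so each $\phi_i=\phi_i(K)$ is continuous on $(K_\theta,\infty)$. My first step is to extract the $K\to\infty$ asymptotics from the fixed-point equation $\arctan(\tan(\phi-\theta)/K)=(\phi-2\theta)/2$: the only consistent limits are $\phi\to 2\theta$ (argument of $\arctan$ tending to $0$) or $\tan(\phi-\theta)\to\pm\infty$ (giving $\phi\to\theta\pm\pi/2$). Matching these with the enclosures of Lemma~\ref{s4l9} forces $\phi_0(K)\to 2\theta$, $\phi_1(K)\to\theta+\pi/2$, and $\phi_2(K)\to\theta-\pi/2$, so $-\phi_2(K)\to\pi/2-\theta$, which is strictly greater than $2\theta$ precisely because $\theta<\pi/6$. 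Setting $g_i(K):=\phi_i(K)+\phi_2(K)$ for $i=0,1$, this gives
\[
\lim_{K\to\infty}g_0(K)=3\theta-\pi/2<0, \qquad \lim_{K\to\infty}g_1(K)=2\theta>0,
\]
while as $K\to K_\theta^+$ both $g_0(K)$ and $g_1(K)$ tend to the common value $S:=\phi_{K_\theta}+\phi_2(K_\theta)$, since $\phi_0$ and $\phi_1$ both merge into $\phi_{K_\theta}$ at the bifurcation and $\phi_2$ depends continuously on $K$ across $K_\theta$. If $S>0$ the intermediate value theorem applied to $g_0$ produces a $K$ with $\phi_0(K)=-\phi_2(K)$; if $S<0$ it applied to $g_1$ gives $\phi_1(K)=-\phi_2(K)$.

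The hard part will be the degenerate case $S=0$, in which both $g_0$ and $g_1$ merely approach $0$ at $K_\theta$ and one has to show they are not immediately monotone away from $0$. To handle this I plan to exploit the bifurcation structure at $K=K_\theta$: as $K$ crosses $K_\theta$, $\phi_0$ strictly decreases away from $\phi_{K_\theta}$ while $\phi_1$ strictly increases, and $\phi_2$ varies monotonically, so one of $g_0,g_1$ must attain $0$ for some $K$ in the interior of $(K_\theta,\infty)$. Verifying the required monotonicity rigorously, either by implicit differentiation of the fixed-point equation or by direct analysis of how the roots of $P$ move with $K$, is the technical heart of the argument; everything else is interval bookkeeping plus a standard IVT.
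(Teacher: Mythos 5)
Your parts (i) and (ii) are correct and are exactly the paper's argument (interval bookkeeping from Lemmas~\ref{s4l1} and \ref{s4l9}), and your IVT skeleton for (iii) --- comparing $g_i(K)=\phi_i(K)+\phi_2(K)$ at $K\to K_\theta^+$ against the limits $\phi_0\to 2\theta$, $\phi_1\to\theta+\pi/2$, $\phi_2\to\theta-\pi/2$ as $K\to\infty$ --- is also the paper's route. The genuine gap is the degenerate case $S=\phi_{K_\theta}+\phi_2(K_\theta)=0$, which you correctly isolate but then defer to an unverified monotonicity claim. That proposed fix is not only incomplete but likely insufficient: if $S=0$ and (as one expects, cf.\ the proof of Corollary~\ref{s2ct4}) $\phi_2(K)$ is decreasing in $K$ while $\phi_0(K)$ decreases away from $\phi_{K_\theta}$, then $g_0$ is strictly negative on all of $(K_\theta,\infty)$, and $g_1$ could in principle increase monotonically from $0$ without an interior zero; monotonicity alone does not force either $g_0$ or $g_1$ to vanish at a point where $H$ actually has three fixed rays.

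The missing ingredient is that $S=0$ cannot occur, and this is precisely Lemma~\ref{s4lT1}: at $K=K_\theta$ the map $H$ has exactly two fixed rays, namely the neutral one at $\phi_{K_\theta}$ and the repelling one at $\phi_2(K_\theta)\in\widetilde{\mathbb{F}}_\theta^-$, and Lemma~\ref{s4lT1} (proved by the long trigonometric computation culminating in $\tan\theta=\tan 2\theta$) says these are never symmetric about the real axis, i.e.\ $\phi_{K_\theta}\neq-\phi_2(K_\theta)$. With $S\neq 0$ your two IVT cases ($S>0$ gives a zero of $g_0$; $S<0$ gives a zero of $g_1$) close the argument with no monotonicity needed. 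So the structure of your proof is right, but as written it does not constitute a proof until the $S=0$ case is eliminated, and the elimination comes from an already-established lemma rather than from the bifurcation analysis you sketch.
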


\begin{proof}
As $\phi_1,\phi_0>0$ we must have $\phi_1\neq-\phi_0$. Suppose $\phi_2=-\phi_0$. Recall from Lemma~\ref{s4l1} that for this to be possible
\[\widetilde{\mathbb{F}}_\theta^+\cap-\widetilde{\mathbb{F}}_\theta^-\neq\emptyset.\]
This implies that $\phi_1$ must satisfy the two inequalities
\[2\theta<\phi_1<\pi/2+\theta \mbox{ and } 0<\phi_1<\pi/2-\theta,\]
which implies
\[0<2\theta<\pi/2-\theta \Rightarrow 0<\theta<\pi/6.\]

If $\theta<\pi/6$ then, by Lemma~\ref{s4lT1}, when $K=K_\theta$ (recall Lemma~\ref{s4l7}) we know $\phi_i\neq\phi_2$, for $i=0,1$. For $K>K_{\theta}$, the neutral fixed point splits into two fixed points $\phi_0$ and $\phi_1$. Further $\phi_1\to\pi/2+\theta$ and $\phi_0\to2\theta$ as $K\to\infty$; also $\phi_2\to-\pi/2+\theta$ as $K\to\infty$. Hence by continuity there must exist some $K>K_\theta$ such that $\phi_2=-\phi_i$ for $i=0$ or $i=1$.
\end{proof}

The previous lemmas show that if $H_2$ has one fixed ray, then if $H_1$ has two or three fixed rays, $H_1$ and $H_2$ cannot be quasiconformally equivalent on a neighbourhood of infinity.

\subsection{The case where $H_2$ has two fixed rays}

We move on to the case where both $H_1$ and $H_2$ have more than one fixed ray. To start, we will show that if there is a quasiconformal equivalence between $H_1$ and $H_2$, it must map the immediate basin of attraction of the non-repelling fixed ray of $H_1$ into the immediate basin of attraction of the non-repelling fixed ray of $H_2$.

\begin{lemma}\label{s7basin}
If $H_1$ and $H_2$ have immediate basins of attraction $\Lambda_1^*$ and $\Lambda_2^*$ respectively for the respective non-repelling fixed rays, and are quasiconformally equivalent in a neighbourhood $U$ of infinity via the map $\Psi$, then $\Psi(\overline{\Lambda_1^*}\cap U)=\overline{\Lambda_2^*}\cap \Psi(U)$.
\end{lemma}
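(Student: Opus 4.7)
The plan is to establish both inclusions. Since $\Psi^{-1}$ quasiconformally conjugates $H_2$ to $H_1$ under the same hypotheses, it suffices to prove $\Psi(\overline{\Lambda_1^*}\cap U) \subseteq \overline{\Lambda_2^*}\cap \Psi(U)$; the reverse inclusion then follows by symmetry. After shrinking $U$ to a forward-invariant neighbourhood of infinity we may assume $\Psi\circ H_1^n = H_2^n\circ\Psi$ on $U$ for every $n\geq 0$. Since $\overline{\widetilde{\Lambda}_1^*}$ is a closed interval in $S^1$ bounded by fixed points of $\widetilde{H}_1$, the set $\overline{\Lambda_1^*}\cap U$ is forward-invariant under $H_1$, so $\Psi(\overline{\Lambda_1^*}\cap U)$ is a closed (in $\Psi(U)$), connected, $H_2$-invariant subset of $\Psi(U)$.

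First I would take an interior point $z \in \Lambda_1^*\cap U$ and analyse the $H_2$-orbit of $w=\Psi(z)$. By the conjugacy $H_2^n(w)=\Psi(H_1^n(z))$, so the arguments $\widetilde{H}_2^n(\arg w)$ evolve under the induced Blaschke product $B_2$. As in the proof of Theorem~\ref{s2t2}, outside the nowhere-dense Julia set of $B_2$ these arguments converge to some fixed angle of $\widetilde{H}_2$. Lemma~\ref{QCE} forces the distortions $K_{H_1^n}(z)$ and $K_{H_2^n}(w)$ to agree up to a uniform multiplicative constant; combined with Lemma~\ref{s7ltend}, which identifies the exponential growth of each with the M\"{o}bius trace invariant of the limiting fixed ray, any mismatch between the limiting angles for the two orbits would contradict Lemma~\ref{QCE}. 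Together with the connectedness and $H_2$-invariance of $\Psi(\Lambda_1^*\cap U)$, this pins $\arg w$ down to the single component $\widetilde{\Lambda}_2^*$ of $\widetilde{\Lambda}_2$ abutting the non-repelling angle $\psi^*$ of $H_2$, so $\Psi(z) \in \Lambda_2^*\cap\Psi(U)$.

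Next I would treat the boundary fixed rays of $\overline{\Lambda_1^*}$. Each such ray $R_\phi$ maps under $\Psi$ to an $H_2$-invariant curve in $\Psi(U)$ whose points have $\widetilde{H}_2$-orbits converging in argument to a fixed angle of $\widetilde{H}_2$. The same trace/distortion matching of step one forces this limiting angle to be one of the boundary fixed angles of $\Lambda_2^*$, and using the two distinct boundary fixed rays of $\overline{\Lambda_1^*}$ together with connectedness of the image, they must correspond to the two boundary fixed angles of $\Lambda_2^*$ in matched order. Passing to closures combines the interior and boundary analyses to yield the desired inclusion, and symmetry then gives equality.

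The hardest step will be step one: we must pin down not only that the $H_2$-orbit of $\Psi(z)$ converges in argument to \emph{some} fixed angle of $\widetilde{H}_2$ (which follows from general Blaschke product dynamics) but specifically to the non-repelling angle $\psi^*$, and moreover that $\arg\Psi(z)$ itself lies in $\widetilde{\Lambda}_2^*$ rather than in one of the other components of $\widetilde{\Lambda}_2$ (which are preimage sectors). The combination of $H_2$-invariance of $\Psi(\overline{\Lambda_1^*}\cap U)$ with the fact that the immediate basin is the unique forward-invariant component of $\widetilde{\Lambda}_2$ at the non-repelling angle is what ultimately excludes these alternatives.
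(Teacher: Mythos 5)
Your overall architecture (one inclusion plus symmetry, forward-invariance of $\Psi(\overline{\Lambda_1^*}\cap U)$, comparison of distortion growth via Lemmas \ref{QCE} and \ref{s7ltend}) is in the right territory, but the central step has two genuine gaps. First, nothing in your argument rules out the possibility that $\Psi(\overline{\Lambda_1^*})$ \emph{spirals}, i.e.\ that its angular projection is all of $S^1$. Your closing appeal to ``connectedness and $H_2$-invariance'' only has force if that projection is a proper closed sub-interval of $S^1$: then it is a forward-invariant closed connected set for $\widetilde{H}_2$ and one can argue it must sit inside $\overline{\widetilde{\Lambda}_2^*}$. If the image winds around infinitely often, connectedness gives nothing. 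This is precisely where the paper spends its trace argument: a spiralling image would force a fixed ray of $H_1$ to meet \emph{every} fixed ray of $H_2$, so by Lemma \ref{s7ltend} all the $\tr(B_j)^2$ would coincide, hence by \eqref{tracesquared} the $\psi_j$ would be pairwise negatives of one another, which Lemmas \ref{s4lT1} and \ref{s4lT2} exclude.

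Second, your main mechanism in step one --- matching the exponential growth rates of $K_{H_1^n}(z)$ and $K_{H_2^n}(\Psi(z))$ to identify the limiting angle of the $H_2$-orbit as the non-repelling angle $\psi^*$ --- does not work as stated. Trace matching only yields $\tr(A_0)^2=\tr(B_j)^2$ for \emph{whichever} $j$ the orbit happens to converge to, and this does not single out the non-repelling ray: by \eqref{tracesquared} two distinct fixed rays of $H_2$ share a squared trace exactly when $\psi_i=-\psi_j$, and Lemma \ref{s4lT2} shows this genuinely occurs (for $\theta<\pi/6$ and suitable $K$). Moreover, if $\arg\Psi(z)$ lies in the Julia set of the Blaschke product $B_2$ (a Cantor set, which you have not excluded), the angular orbit need not converge at all and the growth-rate machinery of Theorem \ref{sMl3} is unavailable. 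The paper avoids all of this by never analysing interior orbits: it derives the set-level inclusion $\Psi(\overline{\Lambda_1^*})\cap U_n'\subset H_2^n(\Psi(\overline{\Lambda_1^*}))\cap U_n'$, rules out spiralling as above, and then invokes the density statement of Theorem \ref{s2t2}. If you add the non-spiralling step, your forward-invariant connected set observation can be made to carry the rest of the proof, but as written the key exclusions are asserted rather than proved.
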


\begin{proof}
Since $\overline{\Lambda_1^*}$ is fixed by $H_1$, we have
\[ H_1( \overline{\Lambda_1^*} \cap U ) = \overline{\Lambda_1^*} \cap H_1(U).\]
Since $\Psi$ is injective,
\[ \Psi ( H_1 ( \overline{\Lambda_1^*} \cap U )) = \Psi ( \overline{\Lambda_1^*} ) \cap \Psi(H_1(U)).\]
Using the quasiconformal equivalence,
\[ H_2 ( \Psi ( \overline{\Lambda_1^*} \cap U )) = \Psi ( \overline{\Lambda_1^*} ) \cap H_2 ( \Psi (U)),\]
but we also have
\[ H_2 ( \Psi ( \overline{\Lambda_1^*} \cap U )) \subset H_2 ( \Psi (\overline{\Lambda_1^*})) \cap H_2 ( \Psi (U)).\]
Therefore, in a neighbourhood $U'$ of infinity, we have
\[ \Psi ( \overline{\Lambda_1^*} ) \cap U' \subset H_2 (\Psi(\overline{\Lambda_1^*}) ) \cap U'.\]
This argument also shows that in a neighbourhood $U_n'$ of infinity, we have
\begin{equation}
\label{aboringequation}
\Psi ( \overline{\Lambda_1^*} ) \cap U_n' \subset H_2^n (\Psi(\overline{\Lambda_1^*}) ) \cap U_n'
\end{equation}
for any $n \in \N$.

Now $\Psi ( \overline{\Lambda_1^*} )$ cannot spiral, as in that case, it would intersect all fixed rays of $H_2$. Then we can apply Lemma~\ref{s7ltend} to see the corresponding traces squared of the M\"{o}bius maps $A_i$ of the fixed ray $R_{\phi_i}$ and $B_j$ of the fixed rays $R_{\psi_j}$ must be equal. However Lemmas~\ref{s4lT1} and \ref{s4lT2} show that this cannot be the case. Therefore $\Psi (\overline{\Lambda_1^*} )$ must be contained in some sector.
By \eqref{aboringequation} and Theorem \ref{s2t2}, we must have that
\[ \Psi ( \overline{\Lambda_1^*}\cap U ) \subset \overline{\Lambda_2^*} \cap \Psi(U) \]
in a neighbourhood of infinity.
The same argument applied to $\Psi^{-1}$ shows that
\[ \Psi^{-1}(\overline{\Lambda_2^* } \cap \Psi(U)) \subset \overline{\Lambda_1^* } \cap U, \]
and the lemma is proved.
\end{proof}

We will next show that if $H_2$ has two fixed rays and $H_1$ has three fixed rays, then there cannot be a quasiconformal equivalence between them.

\begin{lemma}\label{s7twothree}
Let $H_1,H_2$ have three and two fixed rays respectively. Then there cannot be a quasiconformal equivalence between them in any neighbourhood of infinity.
\end{lemma}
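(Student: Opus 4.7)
The plan is to exploit the asymmetry that $H_1$ has an attracting fixed ray strictly interior to its immediate basin while $H_2$ does not. Assume for contradiction that a quasiconformal equivalence $\Psi:U\to\Psi(U)$ exists, labelling the fixed rays of $H_1$ as $R_{\phi_2},R_{\phi_0},R_{\phi_1}$ with $\phi_2<\phi_0<\phi_1$ (so $R_{\phi_0}$ attracting, by Lemma~\ref{s4l9}) and those of $H_2$ as $R_{\psi_2},R_{\psi_1}$ with $\psi_2<\psi_1$ (so $R_{\psi_1}$ neutral-attracting, by Lemma~\ref{s4l7}). By Lemma~\ref{Upsilon} we may take $\theta_1,\theta_2\in[0,\pi/2)$.

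First, shrink $U$ so that $0\notin U\cup\Psi(U)$. By Lemmas~\ref{s4l10} and \ref{s4l11}, $\partial\Lambda_1^*\cap U$ consists of two disjoint half-lines lying on $R_{\phi_1}$ and $R_{\phi_2}$, and similarly $\partial\Lambda_2^*\cap\Psi(U)$ splits into disjoint half-lines on $R_{\psi_1}$ and $R_{\psi_2}$. Lemma~\ref{s7basin} together with injectivity of $\Psi$ gives $\Psi(\partial\Lambda_1^*\cap U)=\partial\Lambda_2^*\cap\Psi(U)$, and connectedness of each $R_{\phi_i}\cap U$ forces $\Psi(R_{\phi_1}\cap U)$ and $\Psi(R_{\phi_2}\cap U)$ to lie on distinct rays among $R_{\psi_1},R_{\psi_2}$. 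Since quasiconformal maps in the paper's sense are orientation-preserving (because $|\mu_\Psi|<1$ makes the Jacobian positive almost everywhere), the cyclic order around infinity is preserved: the counterclockwise sequence $R_{\phi_2},R_{\phi_0},R_{\phi_1}$ --- with $R_{\phi_0}\subset\Lambda_1^*$ strictly between --- must map to the counterclockwise sequence $R_{\psi_2},\Lambda_2^*,R_{\psi_1}$, so $\Psi(R_{\phi_1}\cap U)\subset R_{\psi_1}$ and $\Psi(R_{\phi_2}\cap U)\subset R_{\psi_2}$.

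Now apply Lemma~\ref{s7ltend} twice. For $z\in R_{\phi_1}\cap U$, $\Psi(z)$ lies on $R_{\psi_1}$ and remains there under all iterates of $H_2$, so $\arg[H_2^n(\Psi(z))]=\psi_1$ for every $n$, giving $\tr(A_1)^2=\tr(B_1)^2$. For $z\in R_{\phi_0}\cap U\subset\Lambda_1^*\cap U$, $\Psi(z)$ lies in the open basin $\Lambda_2^*$, whence by definition of $\Lambda_2^*$ the argument of $H_2^n(\Psi(z))$ converges to $\psi_1$, giving $\tr(A_0)^2=\tr(B_1)^2$. Combining, $\tr(A_0)^2=\tr(A_1)^2$, and by the trace formula \eqref{tracesquared} with $K_1$ fixed this is equivalent to $\cos\phi_0=\cos\phi_1$. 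For $\theta_1>0$ both angles lie in $\widetilde{\mathbb{F}}^+_{\theta_1}=(2\theta_1,\pi/2+\theta_1)\subset(0,\pi)$ where $\cos$ is injective; for $\theta_1=0$ Lemma~\ref{theta0} gives $\phi_0=0<\phi_1<\pi/2$, so $\cos\phi_0=1>\cos\phi_1$. Either way $\phi_0=\phi_1$, contradicting $\phi_0<\phi_1$.

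The main obstacle is the orientation step pinning down $\Psi(R_{\phi_1}\cap U)\subset R_{\psi_1}$ rather than the swapped assignment $\Psi(R_{\phi_1}\cap U)\subset R_{\psi_2}$. Without that identification one would instead obtain $\tr(A_0)^2=\tr(A_2)^2$, i.e.\ $\cos\phi_0=\cos\phi_2$, which forces $\phi_0=-\phi_2$ --- a configuration Lemma~\ref{s4lT2} does not exclude when $\theta_1<\pi/6$. Hence the orientation argument, though standard, is the step that must be made carefully, for instance by comparing the cyclic orders of intersection points of a large circle $|z|=R$ and its $\Psi$-image with the respective basin boundaries.
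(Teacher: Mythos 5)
Your argument is correct, but it is not the route the paper takes. The paper also starts from Lemma~\ref{s7basin} and from the observation that $\arg[H_2^n(\Psi(z))]\to\psi_1$ for $z\in R_{\phi_0}$, but it then avoids any trace comparison: it lifts $\overline{\Lambda_1^*}$ and $\overline{\Lambda_2^*}$ to the strip $\mathbb{S}$ by quasiconformal maps $F_1,F_2$ chosen so that the attracting ray $R_{\phi_0}$ goes to the geodesic $\R$, extends the induced map $P$ to a quasiconformal self-map of $\mathbb{S}$ fixing the ends, and derives a contradiction from the fact that $P(\R)$ must stay in a proper substrip while $F_2(\Psi(R_{\phi_0}))$ accumulates on $\{\Im z=\pi/2\}$ (because the non-repelling ray of $H_2$ sits on the \emph{boundary} of $\Lambda_2^*$, not in its interior). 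Your proof instead applies Lemma~\ref{s7ltend} twice and concludes $\tr(A_0)^2=\tr(A_1)^2$, hence $\cos\phi_0=\cos\phi_1$, which is impossible since $\phi_0\neq\phi_1$ both lie in $\widetilde{\mathbb{F}}^+_{\theta_1}\subset[0,\pi)$ (this is also the content of Lemma~\ref{s4lT2}, "$\phi_1\neq-\phi_0$"). What the paper's geodesic-stability argument buys is that it needs neither the boundary-ray matching nor any trace inequality; what yours buys is that it dispenses with the strip lifting and the quasiconformal extension across $T_1$. The two soft spots in your write-up are exactly where you flag them: (i) the orientation step pinning $\Psi(R_{\phi_1}\cap U)\subset R_{\psi_1}$ is genuinely needed (the swapped assignment only yields $\cos\phi_0=\cos\phi_2$, which Lemma~\ref{s4lT2} cannot exclude for $\theta_1<\pi/6$) and is only sketched, though the cyclic-order-of-arcs-at-infinity argument does go through for an orientation-preserving homeomorphism fixing $\infty$; and (ii) the reduction "we may take $\theta_1,\theta_2\in[0,\pi/2)$" is imprecise, since conjugating by $z\mapsto\bar z$ flips the sign of both parameters simultaneously --- but this is harmless because only the sign of $\theta_1$ enters your final step and the negative case is symmetric ($\phi_0,\phi_1$ then lie in $(-\pi,0)$, where $\cos$ is again injective).
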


\begin{proof}
Suppose that $H_2$ has fixed rays $R_{\psi_1},R_{\psi_2}$ with $\psi_1>\psi_2$ and $H_1$ has fixed rays $R_{\phi_i}$, $i=1,2,3$ with $\phi_2<0<\phi_0<\phi_1$.
Suppose for a contradiction that there is a quasiconformal equivalence $\Psi$ between them. Then by Lemma \ref{s7basin}, we have $\Psi(\overline{\Lambda_1^*}\cap U)=\overline{\Lambda_2^*}\cap \Psi(U)$,
where $\overline{\Lambda_1^*}$ is the closed sector bounded by the rays $R_{\phi_2}$ and $R_{\phi_1}$, and $\overline{\Lambda_2^*}$ is the closed sector bounded by the rays $R_{\psi_1}$ and $R_{\psi_2}$.

We can lift these sectors to the strip
\[\mathbb{S}:=\{z\in\C\;|\; -\pi/2\leq\Im(z)\leq\pi/2\}\]
via the quasiconformal maps $F_i: \overline{\Lambda_i^*} \to \mathbb{S}$ given by
\[F_1(re^{is}) = \left\{          \begin{array}{ll}
                                 \log r +i \left ( \frac{\pi(s - \phi_0)}{2(\phi_1-\phi_0 )} \right ) & \mbox{if } \phi_0\leq s \leq\phi_1,\\
                                 \log r - i \left ( \frac{ \pi ( s - \phi_0 ) }{2(\phi_2 - \phi_0 )} \right ) & \mbox{if } \phi_2\leq s <\phi_0.
                               \end{array}
                             \right. ,\]
and
\[ F_2(re^{is}) = \log r  + i \left ( \frac{ \pi ( 2s - (\psi_1+\psi_2) ) }{2(\psi_1 - \psi_2 )} \right ).\]
Then $\Psi : \overline{\Lambda_1^*} \cap U \to \overline{\Lambda_2^*} \cap \Psi(U)$ lifts to a quasiconformal map $P:\Omega_1 \to \Omega_2$, where $\Omega_i = F_i(\overline{\Lambda_i^*} ) \subset \mathbb{S}$ for $i=1,2$ and satisfies
$P \circ F_1 = F_2 \circ \Psi$. Note that $\Omega_i$ is a connected subset of $\mathbb{S}$ whose boundary consists of two semi-infinite lines contained in the boundary of $\mathbb{S}$, and a curve $\gamma_i$ in $\mathbb{S}$ connecting them. See figure~\ref{fStrip}.

We want to extend $P$ to a quasiconformal map from $\mathbb{S}$ to itself. There are many ways to do this, and we outline one here. Let $T_i$ be the triangle $\mathbb{S} \setminus \Omega_i$ with vertices at the endpoints of $\gamma_i$ and at $-\infty$. Define $q:\partial T_1 \to \partial T_2$ by translation on the respective horizontal semi-infinite lines, and agreeing with $P$ on $\gamma_1$.

Let $g_i:T_i \to \D$ be conformal maps of the triangles onto the disk, sending the respective vertices to $-1,i,1$ respectively. Then $\widetilde{q} = g_2 \circ q \circ g_1^{-1}$ from $S^1$ to itself is a quasisymmetric map by construction. Extend to a quasiconformal map $\widetilde{q}:\D \to \D$ via, for example, the Douady-Earle extension (see for example \cite{FM}). Then we may extend $P$ on the strip $\mathbb{S}$ by setting $P = g_2^{-1} \circ \widetilde{q} \circ g_1$ on $T_1$. This extension of $P$ is a quasiconformal map by construction.

Now, consider the attracting fixed ray $R_{\phi_0}$ of $H_1$ which is contained in the interior of the region $\Lambda_1^* \cap U$. The image of $R_{\phi_0}$ under $\Psi$ must be contained in $\overline{\Lambda_2^*}$
by Lemma \ref{s7basin}. Then by \eqref{s7e3}
\begin{equation}\label{s7etend}
\arg[H_2^n(\Psi(z))]\rightarrow \psi_1
\end{equation}
as $n \to \infty$, for $z \in R_{\phi_0}$, since all points in $\Lambda_2^*$ converge to the neutral fixed ray $R_{\psi_1}$ of $H_2$.
In particular, by lifting to the strip, $F_1(R_{\phi_0})$ is contained in the real line, but $P(F_1(R_{\phi_0})) = F_2(\Psi(R_{\phi_0}))$ is a curve which converges to the upper boundary component $\{ \Im z = \pi/2\}$ of $\mathbb{S}$.

This contradicts the lemma below applied to $P$, completing the proof.
\end{proof}

\begin{figure}[h]
\begin{center}
\input{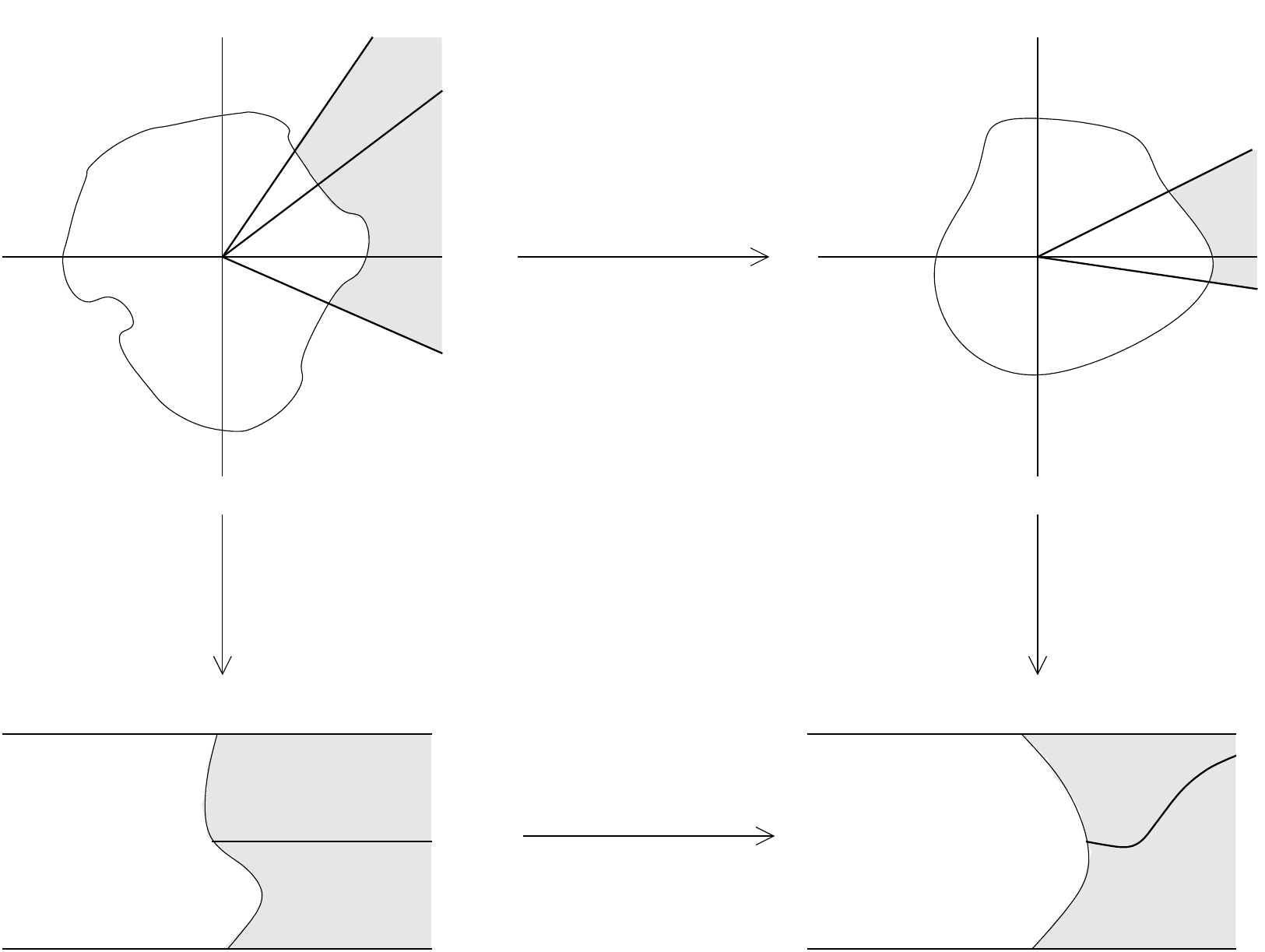_t}
\caption{Diagram showing how $P$ is induced from the action of $\Psi$ on the sector $\overline{\Lambda_1^*}$.}\label{fStrip}
\end{center}
\end{figure}

\begin{lemma}
Let $f:\mathbb{S} \to \mathbb{S}$ be a $K$-quasiconformal map which sends $\pm \infty$ to $\pm \infty$ respectively. Then there exists $\delta < \pi/2$ such that $f(\R)$ is contained in the sub-strip $\{ z :|\Im z| < \delta \}$ of $\mathbb{S}$.
\end{lemma}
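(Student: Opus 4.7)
The plan is to read the conclusion as a statement about hyperbolic geometry on $\mathbb{S}$. Recall that pulling back the Poincar\'e metric on $\D$ via $\phi(z) = \tanh(z/2)$ gives the hyperbolic metric
\[ ds = \frac{|dz|}{\cos(\Im z)}\]
on $\mathbb{S}$, with respect to which $\R$ is a complete geodesic whose two ends in the Gromov boundary are precisely the ends $\pm\infty$ of the strip. The sub-strips that appear in the conclusion are exactly the hyperbolic tubular neighborhoods of $\R$: the set $\{z \in \mathbb{S}:d_\mathbb{S}(z,\R)\le D\}$ coincides with $\{|\Im z|\le \delta(D)\}$, where $\delta(D)$ is determined by $\int_0^{\delta(D)} dt/\cos t = D$, and $\delta(D) < \pi/2$ for every finite $D$. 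So it suffices to prove that $f(\R)$ lies in a bounded hyperbolic neighborhood of $\R$.

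The key tool is that every $K$-quasiconformal self-map of $\mathbb{S}$ is a quasi-isometry of the hyperbolic metric: there exist constants $\lambda = \lambda(K) \ge 1$ and $\mu = \mu(K)\ge 0$ such that
\[ \tfrac{1}{\lambda}\,d_\mathbb{S}(z,w)-\mu \;\le\; d_\mathbb{S}(f(z),f(w)) \;\le\; \lambda\,d_\mathbb{S}(z,w)+\mu \]
for all $z,w \in \mathbb{S}$. (Passing through $\phi$ this is the standard fact that $K$-qc self-maps of $\D$ are quasi-isometries of the Poincar\'e metric, combining Mori's H\"older estimate for small distances with the $K$-Lipschitz bound for large distances.) Parametrizing $\R$ by hyperbolic arc-length, $f$ restricted to $\R$ is therefore a $(\lambda,\mu)$-quasi-geodesic in $\mathbb{S}$.

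Because $f$ sends $+\infty$ to $+\infty$ and $-\infty$ to $-\infty$, the quasi-geodesic $f(\R)$ has the same pair of ideal endpoints as the geodesic $\R$. The stability of quasi-geodesics in $\mathbb{H}^2$ (Morse's lemma) then gives a constant $D = D(\lambda,\mu)$ such that the Hausdorff distance between $f(\R)$ and $\R$ is at most $D$. Taking $\delta := \delta(D) < \pi/2$ yields $f(\R) \subset \{|\Im z| < \delta\}$, as desired.

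The main obstacle is verifying the hypotheses of the Morse lemma rigorously, and in particular interpreting $f(\pm\infty) = \pm\infty$ as convergence in the Gromov boundary of $(\mathbb{S},d_\mathbb{S})$ so as to conclude that $f(\R)$ really has the same endpoints as $\R$ (rather than, say, spiralling). One clean way is to conjugate by $\phi$ so everything takes place in $\D$, where $f$ becomes a $K$-qc self-homeomorphism $g$ of $\D$ extending (by Mori's theorem) to a boundary homeomorphism fixing $\pm 1$; then $g((-1,1))$ is a quasi-geodesic in $\D$ from $-1$ to $1$, and Morse stability forces it to remain in a compact hyperbolic neighborhood of $(-1,1)$, which pulls back under $\phi^{-1}$ to a sub-strip $\{|\Im z| < \delta\}$.
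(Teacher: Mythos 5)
Your proposal is correct and follows essentially the same route as the paper: conjugate to the disk, use the fact that a $K$-quasiconformal self-map of the hyperbolic plane is a quasi-isometry, invoke stability of quasi-geodesics (Morse's lemma) to keep $f(\R)$ within bounded Hausdorff distance of the geodesic with the same ideal endpoints, and note that since $\pm\infty$ are fixed this geodesic is $\R$ itself, whose bounded hyperbolic neighbourhoods in $\mathbb{S}$ are exactly the sub-strips $\{|\Im z|<\delta\}$ with $\delta<\pi/2$. The paper cites Theorem 4.3.2 and Lemma 4.3.1 of \cite{FM} for precisely these two ingredients, so no substantive difference.
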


\begin{proof}
This is a strip version of a well-known result in the disk, and using the fact that $\R$ is a geodesic in $\mathbb{S}$. More specifically,
by Theorem~4.3.2 of \cite{FM}, if $f:\D \to \D$ is $K$-quasiconformal, there exists some $a$, depending on $K$, such that $f$ is a $(K,a)$-quasi-isometry. Then by Lemma~4.3.1 of \cite{FM}, given a geodesic $\gamma \subset \D$, there exists $C>0$ depending on $K$ such that $f(\gamma)$ is contained in a $C$-neighbourhood of some geodesic $\gamma'$. Lifting to the strip, $\gamma = \R$ and the corresponding $\gamma'$ is also $\R$. This proves the lemma.
\end{proof}

\subsection{Proof of Theorem \ref{s2t4}}

By Lemmas \ref{s7l3} and \ref{s7twothree}, we know that if $H_1$ and $H_2$ are quasiconformally equivalent in a neighbourhood of infinity, then they must have the same number of fixed rays. In the next two lemmas, we show that under a quasiconformal equivalence, the image of a fixed ray of $H_1$ must either intersect or approach a fixed ray of $H_2$.

\begin{lemma}\label{s7l5}
Suppose $H_1$ and $H_2$ are quasiconformally equivalent on a neighbourhood of infinity and both have one fixed ray $R_{\phi}$ and $R_{\psi}$ respectively. Then there exists $z\in R_{\phi}\cap U$ such that $\Psi(z)\in R_{\psi}$.
\end{lemma}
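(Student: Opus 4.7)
The plan is to track the argument of $\Psi(z)$ as $z$ traverses $R_\phi$ and use Theorem~\ref{s2t2} to hit a preimage of $\psi$ under $\widetilde{H}_2$. First I would shrink $U$ to a smaller neighbourhood of infinity of the form $\{|z|>R\}$ that is forward-invariant under $H_1$ (possible because $|H_1(z)|\geq c|z|^2$ for large $|z|$); any point found in the smaller set automatically lies in the original $U$. With this normalization, $R_\phi\cap U$ is a single unbounded sub-ray, and the conjugacy $\Psi\circ H_1 = H_2\circ\Psi$ together with $H_1(R_\phi)=R_\phi$ gives that the image curve $\Gamma:=\Psi(R_\phi\cap U)$ is forward-invariant under $H_2$:
\[H_2(\Gamma) = \Psi(H_1(R_\phi\cap U)) \subset \Psi(R_\phi\cap U) = \Gamma.\]

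Next I would consider the continuous function $\alpha:R_\phi\cap U\to S^1$ defined by $\alpha(z)=\arg\Psi(z)$ (well-defined since $0\notin\Psi(U)$, and liftable continuously because $R_\phi\cap U$ is an interval). Its image is a connected subset of $S^1$, hence either a single point or a subset with non-empty interior.

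In the first case, $\Gamma\subset R_{\psi'}$ for some $\psi'\in S^1$. Then
\[H_2(\Gamma) \subset \Gamma\cap H_2(R_{\psi'}) \subset R_{\psi'}\cap R_{\widetilde{H}_2(\psi')}.\]
If $\widetilde{H}_2(\psi')\neq\psi'$ this intersection is $\{0\}$, contradicting the fact that $\Gamma$ lies in a neighbourhood of infinity (so $H_2(\Gamma)$ is a nondegenerate connected set avoiding $0$). Therefore $\psi'$ is a fixed point of $\widetilde{H}_2$, and by the uniqueness hypothesis $\psi'=\psi$; any $z\in R_\phi\cap U$ then works. In the second case, the image of $\alpha$ contains an open arc $I\subset S^1$. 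By Theorem~\ref{s2t2}, $\{H_2^{-k}(R_\psi)\}_{k\geq 0}$ is dense in $\C$, which is exactly the statement that $\bigcup_{n\geq 0}\widetilde{H}_2^{-n}(\psi)$ is dense in $S^1$, so this set meets $I$. Hence there exist $z_0\in R_\phi\cap U$ and $n\geq 0$ with $\widetilde{H}_2^n(\alpha(z_0))=\psi$, i.e.\ $H_2^n(\Psi(z_0))\in R_\psi$. Iterating the conjugacy and using the $H_1$-invariance of $R_\phi$ and $U$, $H_1^n(z_0)\in R_\phi\cap U$ and $\Psi(H_1^n(z_0))=H_2^n(\Psi(z_0))\in R_\psi$; take $z=H_1^n(z_0)$.

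The main obstacle is the degenerate first case, where a priori $\Psi$ could squeeze all of $R_\phi\cap U$ onto a single ray; the forward $H_2$-invariance of $\Gamma$ together with the uniqueness of the fixed ray of $H_2$ is what closes this case and reduces the problem to the density argument supplied by Theorem~\ref{s2t2}.
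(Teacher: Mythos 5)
Your proof is correct and follows essentially the same route as the paper: split into the case where $\Psi(R_\phi\cap U)$ lies on a single ray (forced to be the fixed ray $R_\psi$ by the conjugacy) and the case where its arguments sweep out an open arc, then invoke the density of $\{H_2^{-k}(R_\psi)\}$ from Theorem~\ref{s2t2} and push forward with the conjugacy. Your version is in fact slightly more careful than the paper's, in that you explicitly shrink $U$ to a forward-invariant neighbourhood of infinity so that the final point $H_1^n(z_0)$ is guaranteed to lie in $R_\phi\cap U$.
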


\begin{proof}
If $\Psi(R_{\phi})$ is a ray then the result follows from \eqref{s7e3}, using the same argument as in the proof of Lemma~\ref{s7l3}. Suppose $\Psi(R_{\phi})$ is not a ray, then it must intersect a sector $\Delta$. By Theorem~\ref{s2t2}
\[\Delta\cap\{ H_2^{-k}(R_{\psi}) \} \neq\emptyset,\]
hence there exists some ray $R\subset\Delta$ and $n\in\N$ such that $(H_2)^n(R)=R_\psi$. We can then choose $w\in R_\phi$ such that $\Psi(w)\in R$. From \eqref{s7e3} we know
\[(H_2)^n(\Psi(w))=\Psi((H_1)^n(w)).\]
Choosing $z=(H_1)^n(w)$ completes the proof.
\end{proof}

\begin{lemma}\label{s7l6}
Suppose $H_1$ and $H_2$ are quasiconformally equivalent on a neighbourhood of infinity and have three fixed rays. Let $R_{\phi_0}$ and $R_{\psi_0}$ be the attracting fixed rays of $H_1$ and $H_2$ respectively. Then for $z\in R_{\phi_0}$
\[\arg[H_2^n (\Psi(z))]\to \psi_0 \mbox{ as } n\to\infty.\]
The remaining fixed rays $R_{\phi_i}$ for $i=1,2$ of $H_1$ and $R_{\psi_i}$ for $i=1,2$ of $H_2$ such that $\phi_2<\phi_0<\phi_1$ and $\psi_2<\psi_0<\psi_1$ must satisfy $\Psi(R_{\phi_i})=R_{\psi_i}$ for $i=1,2$.
\end{lemma}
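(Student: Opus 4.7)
The plan is to combine Lemma~\ref{s7basin}, which forces $\Psi$ to preserve the closed immediate basins, with an orientation argument on the strip $\mathbb{S}$ to match up the repelling fixed rays. By Lemma~\ref{s7basin}, $\Psi(\overline{\Lambda_1^*}\cap U) = \overline{\Lambda_2^*}\cap\Psi(U)$, where $\overline{\Lambda_j^*}$ is the closed sector bounded by the two repelling fixed rays of $H_j$. Since the attracting ray $R_{\phi_0}$ lies in the interior $\Lambda_1^*$, injectivity of $\Psi$ gives $\Psi(R_{\phi_0}\cap U)\subset\Lambda_2^*$. Applying Lemma~\ref{s4l11} to $H_2$ yields $\widetilde{\Lambda}_2^* = (\psi_2,\psi_1)$ with $\widetilde{H}_2^n$ attracting every point in this interval to $\psi_0$; since $H_2$ sends rays to rays, $\arg[H_2^n(\Psi(z))] = \widetilde{H}_2^n(\arg\Psi(z))\to\psi_0$ for $z\in R_{\phi_0}\cap U$, which is the first assertion.

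For the second assertion, the homeomorphism $\Psi$ carries the relative boundary $(R_{\phi_1}\cup R_{\phi_2})\cap U$ of $\overline{\Lambda_1^*}\cap U$ onto $(R_{\psi_1}\cup R_{\psi_2})\cap\Psi(U)$. Taking $U=\{|z|>R\}$ with $R$ large enough that $R_{\psi_1}\cap \Psi(U)$ and $R_{\psi_2}\cap\Psi(U)$ are disjoint connected arcs, connectedness of $\Psi(R_{\phi_i}\cap U)$ forces it to lie entirely in one of $R_{\psi_1}$ or $R_{\psi_2}$, and injectivity forces $R_{\phi_1}$ and $R_{\phi_2}$ to land in different rays.

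To pin down the matching, I would adapt the strip lift from the proof of Lemma~\ref{s7twothree}. Define $F_j:\overline{\Lambda_j^*}\to\mathbb{S}$ piecewise-linearly in the angular variable so that $F_j$ sends the attracting ray to $\R$, the upper repelling ray to $\R+i\pi/2$, and the lower repelling ray to $\R-i\pi/2$. Each $F_j$ is orientation preserving, so $P := F_2\circ\Psi\circ F_1^{-1}$ is an orientation-preserving quasiconformal map between two subregions of $\mathbb{S}$, each containing a common half-strip $\{\Re z>R''\}$ and extending continuously to fix $+\infty$. A counterclockwise boundary traversal with the region on the left visits the lower edge rightward and the upper edge leftward, so orientation preservation forces $P$ to send the upper edge to the upper edge and the lower edge to the lower edge. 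Pushing back through $F_1$ and $F_2$ yields $\Psi(R_{\phi_i}\cap U)\subset R_{\psi_i}$ for $i=1,2$, and the same argument applied to $\Psi^{-1}$ gives equality.

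The main obstacle will be making the orientation step rigorous given that the subdomains of $\mathbb{S}$ have irregular ``left'' boundaries and $+\infty$ is a degenerate boundary point; however, by restricting $P$ to a half-strip where both $P$ and $P^{-1}$ are defined, and tracking how short transverse arcs at the upper and lower edges are mapped, a standard boundary-behaviour argument for orientation-preserving quasiconformal maps of a half-strip fixing $+\infty$ gives the desired pairing.
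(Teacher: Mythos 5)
Your proposal is correct and, on the key second assertion, is actually more complete than the paper's own proof. The paper argues briefly: it invokes Lemma~\ref{s7basin} together with Lemma~\ref{s4l11} (so that $\Psi$ carries the closed sector bounded by $R_{\phi_1},R_{\phi_2}$ onto the one bounded by $R_{\psi_1},R_{\psi_2}$) and declares the final statement proved, and for the attracting ray it splits into cases according to whether $\Psi(R_{\phi_0})$ is a ray, using Theorem~\ref{s2t2} to produce a single point $w\in\Psi(R_{\phi_0})\cap\Lambda_2$, which gives the convergence only for \emph{some} $z\in R_{\phi_0}$. Your handling of the first assertion is cleaner: since a homeomorphism carries relative interiors to relative interiors, $\Psi(R_{\phi_0}\cap U)\subset\Lambda_2^*$ and the convergence holds for \emph{every} $z\in R_{\phi_0}\cap U$ (note it is this open-mapping property, not injectivity alone, that does the work here, and likewise it is surjectivity of the induced boundary map, not injectivity, that forces the two repelling rays to land on different rays). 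More significantly, your orientation argument supplies a step the paper leaves implicit: both boundary rays of $\Lambda_1^*$ are repelling and dynamically indistinguishable a priori, so ruling out the flip $\Psi(R_{\phi_1})=R_{\psi_2}$, $\Psi(R_{\phi_2})=R_{\psi_1}$ genuinely requires using that $\Psi$, being quasiconformal and hence sense-preserving, must preserve the cyclic order of the boundary arcs meeting at the fixed boundary point $+\infty$; this matters because the index-matching is exactly what condition (iv) of Theorem~\ref{s2t4} relies on. The technical worry you flag is surmountable along the lines you indicate: replace $U$ by $\{|z|>R'\}$ with closure inside $U$ so that $\overline{\Lambda_i^*}$ intersected with the relevant neighbourhoods become closed Jordan domains on the sphere, and then the standard boundary correspondence for orientation-preserving homeomorphisms of Jordan domains, applied at the fixed boundary point $+\infty$, pins the lower edge to the lower edge. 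One could even run this order argument directly on the sectors in the plane without passing through the strip, but either way your route is sound.
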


\begin{proof}
We know from Lemma~\ref{s7basin} that
\[\Psi(\overline{\Lambda_1^*}\cap U) = \overline{\Lambda_2^*}\cap U.\]
By Lemma~\ref{s4l11} we know $\widetilde{\Lambda}_1^*=(\phi_1,\phi_2)$ and $\widetilde{\Lambda}_2^*=(\psi_1,\psi_2)$, proving the final part of the lemma.

If $\Psi(R_{\phi_0})$ is a ray then $\Psi(R_{\phi_0})=R_{\psi_0}$ from \eqref{s7e3}, by using the same argument as in Lemma~\ref{s7l3} and the fact that we already know $\Psi(R_{\phi_i})=R_{\psi_i}$ for $i,j=1,2$. Assume $\Psi(R_{\phi_0})$ is not a ray, then by Theorem~\ref{s2t2}
\[\Psi(R_{\phi_0})\cap\Lambda_2\neq\emptyset.\]
Choosing $w\in\Psi(R_{\phi_0})\cap\Lambda_2$ implies $\arg[H_2^n(w)]\to \psi_0$; choosing $z=\Psi^{-1}(w)$ proves the lemma.
\end{proof}

Now finally we piece everything together.
\begin{proof}[Proof of Theorem \ref{s2t4}]
Suppose that there is a quasiconformal equivalence $\Psi$ between $H_1$ and $H_2$ on some neighbourhood $U$ of infinity.
First notice that any neighbourhood of infinity intersects every ray. In particular, it intersects fixed rays of $H_1$ and $H_2$. By Lemma \ref{s7l3} and \ref{s7twothree}, $H_1$ and $H_2$ must have the same number of fixed rays. Each fixed ray $\phi_i$ and $\psi_j$ of $H_1$ and $H_2$ respectively has a corresponding M\"{o}bius map $A_i,B_j$ respectively.

Suppose $H_1$ and $H_2$ have one fixed ray. Then Lemma~\ref{s7l5} tells us that we contradict Lemma~\ref{s7ltend} unless $\tr(A_1)=\tr(B_1)$.

Suppose $H_1$ and $H_2$ have two fixed rays $R_{\phi_i}$ and $R_{\psi_j}$ respectively, where $\phi_2<\phi_1$ and $\psi_2<\psi_1$. Lemma~\ref{s7basin} implies $\Psi(R_{\phi_i})=R_{\psi_i}$ for $i=1,2$, and so we contradict Lemma~\ref{s7ltend} unless $\tr(A_i)=\tr(B_i)$ for both $i=1,2$.

Finally suppose $H_1$ and $H_2$ have three fixed rays. Again, by Lemma~\ref{s7l6} we contradict Lemma~\ref{s7ltend} unless $\tr(A_i)=\tr(B_i)$ for $i=1,2,3$.
\end{proof}

We can do better than this and rule out further possibilities, as stated in Corollary~\ref{s2ct4}.

\begin{proof}[Proof of Corollary \ref{s2ct4}]

Suppose $\theta\in(0,\pi/2)$ is fixed. Recall from Lemma~\ref{s4l5} that there always exists a fixed point $\phi\in\widetilde{\mathbb{F}}_\theta^-$ of $\widetilde{H}$, that is $0>\phi>\theta-\pi/2$. Let $K_1,K_2>1$, $\widetilde{h}_1:=\widetilde{h}_{K_1,\theta}$, $\widetilde{h}_2:=\widetilde{h}_{K_2,\theta}$ and $0>\varphi>\theta-\pi/2$. Then
\begin{equation}\label{C1}
\widetilde{h}_1(\varphi)-\widetilde{h}_2(\varphi)=\tan^{-1}\left(\frac{\tan(\varphi-\theta)}{K_1}\right)+\theta- \tan^{-1}\left(\frac{\tan(\varphi-\theta)}{K_2}\right)-\theta.
\end{equation}
Using the addition formula for $\tan^{-1}$ and simplifying, \eqref{C1} becomes
\begin{equation}\label{C2}
\widetilde{h}_1(\varphi)-\widetilde{h}_2(\varphi) = \tan^{-1}\left(\frac{(K_2-K_1)\tan(\varphi-\theta)}{K_1K_2+\tan^2(\varphi-\theta)}\right).
\end{equation}
Here $\tan(\varphi-\theta)<0$ is fixed. Hence if $K_1>K_2$ then \eqref{C2} implies $\widetilde{h}_1(\varphi)>\widetilde{h}_2(\varphi)$. Let $\phi_K$ be the fixed point of $\widetilde{h}_{K,\theta}$ in $\widetilde{\mathbb{F}}_\theta^-$. By Lemma~\ref{s4l8} we know $\widetilde{h}_1(\phi_{K_1})=\phi_{K_1}/2$ and $\widetilde{h}_1(\varphi)<\widetilde{h}_1(\varphi)/2$ for $0<\varphi<\phi_{K_1}$. By \eqref{C2} if $K_2>K_1$ then $\widetilde{h}_2(\phi_{K_1})<\phi_{K_1}/2$ and $\widetilde{h}_2(\varphi)<\widetilde{h}_2(\varphi)/2$ for $0<\varphi<\phi_{K_1}$. Hence it must be the case that $\phi_{K_2}<\phi_{K_1}$. This shows that $\phi_K$ decreases as $K$ increases.

Suppose that $H_{K_1,\theta}$ is quasiconformally conjugate to $H_{K_2,\theta}$. Then by Theorem~\ref{s2t4} they must have the same number of fixed rays and the corresponding traces squared, for the fixed points $\phi_{K_1},\phi_{K_2}\in\widetilde{\mathbb{F}}_\theta^-$ of $\widetilde{H}_{K_1,\theta}$ and $\widetilde{H}_{K_2,\theta}$ respectively, must be equal. This implies
\[\frac{(K_1+1)^2}{2K_1}(1+\cos\phi_{K_1})=\frac{(K_2+1)^2}{2K_2}(1+\cos\phi_{K_2}).\]
Rearranging we obtain
\begin{equation}\label{ct4e1}
\frac{K_2(K_1+1)^2}{K_1(K_2+1)^2}=\frac{1+\cos\phi_{K_1}}{1+\cos\phi_{K_2}}.
\end{equation}
Suppose $K_1<K_2$, then we know $\phi_{K_1}>\phi_{K_2}$ which implies the right hand side of \eqref{ct4e1} is greater than 1, however
\[K_2(K_1+1)^2-K_1(K_2+1)^2=(K_2-K_1)(1-K_1K_2)<0.\]
Hence the left hand side of \eqref{ct4e1} is less than 1, a contradiction.

If $\theta=0$ then $\phi_0=0$ is always a fixed point of $\widetilde{H}_{K_1,0}$ and $\widetilde{H}_{K_2,0}$ for any $K_1,K_2>1$. Hence the right hand side of \eqref{ct4e1} is always equal to one, but the left hand side is only equal to one if $K_1=K_2$.
\end{proof}

\section{Properties of hyperbolic M\"{o}bius maps}

\begin{lM1}
Let $A:\D\to\D$ be a hyperbolic M\"{o}bius map of the disk. Then $A$ is conjugate to a M\"{o}bius map of the upper half plane of the form
\[\widehat{A}(z)=kz,\]
where
\begin{equation}\label{s8Tr}
k=(T-2-(T^2-4T)^{\frac{1}{2}})/2,
\end{equation}
where $T:=\tr^2(A).$
\end{lM1}

\begin{proof}
We know that $\tr A^2>4$ and so by standard hyperbolic geometry we can lift to $\widetilde{A}:\Hp\to\Hp$. We see $\widetilde{A}$ has the same trace as $A$ and is of hyperbolic type so must be conjugate to $\widehat{A}(z)=kz$ for some $k>0$. Conjugation preserves trace hence
\[k+1/k+2=\tr^2(\widehat{A})=\tr^2(A)=T,\]
solving this for $k$ and taking the negative square root gives equation \eqref{s8Tr} and $k<1$. Taking the positive square root would give the reciprocal.
\end{proof}

\begin{tM3}
Let $A,A_j:\D\to\D$ be hyperbolic M\"{o}bius maps such that $A^n(z)\to\alpha\in\partial\D$ as $n\to\infty$ and $A_j\to A$ locally uniformly as $j\to\infty$. Let
\[t_n(z)=A_1\circ A_2\circ\ldots\circ A_n(z).\]
Then
\[d_h(0,t_n(z))=\log\left[\frac{1}{\prod_{j=1}^n k_j}\right] + O(1),\]
for large $n$, where $d_h$ denotes the hyperbolic metric on $\D$, $k_j<1$ for all $j$ and $k_j\to k$, where $k_j,k$ are the quantities defined in Lemma~\ref{sMl1}.
\end{tM3}

\begin{proof}
First if $A^n(z)\to\alpha$ for $z\in\D$ then $t_n(z)\to \alpha$ by Lemma~\ref{sMlMM}. Now let $B=A^{-1}$ and $B_j=A_j^{-1}$. Then if $\alpha,\beta\in\partial\D$ are the attracting and repelling fixed points of $A$ respectively, then $\beta$ is the attracting fixed point and $\alpha$ is the repelling fixed point of $B$.
Similarly if $\alpha_j,\beta_j\in\partial\D$ are the attracting and repelling fixed points of $A_j$ respectively, then $\beta_j$ is the attracting fixed point and $\alpha_j$ is the repelling fixed point of $B_j$. Further, we have $B_j\to B$ and so $\alpha_j\to\alpha$ and $\beta_j\to\beta$ as $j\to\infty$.

We write $\widetilde{B}$ for the lift of $B$ to $\Hp$ via $\gamma:\D\to\Hp$
so that $\widetilde{B} = \gamma \circ B \circ \gamma^{-1}$.
We choose $\gamma$ so that $\gamma(\alpha)=\infty$ and $\gamma(0)=i$. This then means that $\gamma(\beta)=X\in\R$ and $\gamma(\beta_j)=X_j\in\R$, where $X_j\to X$ as $j\to\infty$.

We can also conjugate by the maps $\Phi,\Phi_i:\Hp\to\Hp$ where $\Phi(z)=z-X$ and $\Phi_i(z)=z-X_i$. This means that
\[\widetilde{B}(z)=\Phi^{-1}\circ\widehat{B}\circ\Phi(z) \mbox{ and } \widetilde{B}_j(z)=\Phi_j^{-1}\circ\widehat{B}_j\circ\Phi_j(z),\]
where $\widehat{B}(z)=kz$ and $\widehat{B}_j(z)=k_jz$. The factors $k$ and $k_j$ are determined as in Lemma~\ref{sMl1} so that $k,k_j<1$ and $k_j\to k$ as $j\to\infty$. Let
\[s_n:=t_n^{-1}=B_n\circ\ldots\circ B_1.\]
Writing $\rho_{\Hp}$ for the hyperbolic metric on $\Hp$, by conformal invariance we have
\begin{align*}
d_h(0,t_n(z))&=\rho_{\Hp}(i,\gamma(t_n(z)) )\\
&=\rho_{\Hp}(i,\widetilde{t_n}(\gamma(z ) )\\
&=\rho_{\Hp}(\widetilde{s_n}(i), \gamma(z) ).
\end{align*}
We can rewrite $\widetilde{s}_n(i)$ as
\[\widetilde{s}_n(i)=\Phi_n^{-1}\circ\widehat{B}_n\circ\Phi_n\circ\Phi_{n-1}^{-1}\circ\widehat{B}_{n-1}\circ\Phi_{n-1}\circ\ldots\circ\Phi_1^{-1}\circ\widehat{B}_1\circ\Phi_1(i).\]
It is not hard to see that
\[\widetilde{s}_n(i)=\sum_{j=1}^n\left((X_{j-1}-X_j) \left (\prod_{m=j}^n k_m \right ) \right)+X_n + i\prod_{m=1}^n k_m,\]
where we use the convention $X_0=0$. Writing
\[P_n=\prod_{m=1}^n k_m, \quad R_n=\sum_{j=1}^n\left((X_{j-1}-X_j) \left ( \prod_{m=j}^n k_m \right )\right)\]
and $\gamma(z)=x+iy\in \Hp$, we see that since $X_n \to X$ and $\widetilde{s_n}(i) \to X$ by Lemma~\ref{sMlMM}, we have $R_n \to 0$ as $n \to \infty$.
By the formula for the hyperbolic metric in $\Hp$,
\begin{align}
\rho_{\Hp}(\widetilde{s}_n(i),\gamma(z))&= \cosh^{-1}\left(\frac{(R_n+X_n-x)^2+(P_n-y)^2}{2P_n y}\right)\notag\\
&= \cosh^{-1}\left(\frac{P_n}{2y} + \frac{R_n^2+X_n^2+x^2-2xR_n-2xX_n+2X_nR_n-y}{2P_n}\right).\label{Ds_n1}
\end{align}
Since $R_n \to 0$, $X_n \to X$ and $x,y$ are fixed,
\begin{equation}\label{Ds_n2}
(R_n^2+X_n^2+x^2-2xR_n-2xX_n+2X_nR_n-y)\longrightarrow (X^2+x^2-2xX-y),
\end{equation}
as $n \to \infty$.
This expression is bounded. We also have
\begin{equation}\label{Ds_n3}
\frac{P_n}{2y}\to 0 \mbox{ as } n\to\infty.
\end{equation}
Hence, from \eqref{Ds_n1},\eqref{Ds_n2}, \eqref{Ds_n3} and using the identity $\cosh^{-1}(z)=\log(z+\sqrt{z^2+1})$, we can write
\begin{align}
\rho_{\Hp}(\widetilde{s}_n(i),\gamma(z))&= \cosh^{-1}\left[ O\left ( \frac{1}{P_n} \right ) \right]\notag\\
&=\log\left[ O \left ( \frac{1}{P_n} \right ) \right]\\
&=\log \left ( \frac{1}{P_n} \right ) +O(1),
\end{align}
which proves the lemma.
\end{proof}

\end{document}